\documentclass[a4paper,11pt]{article}
\usepackage{amssymb,amsmath,amsthm,amsfonts}
\usepackage{bookmark}
\usepackage{mathrsfs}
\usepackage{multirow}
\usepackage{graphicx}
\usepackage{authblk}
\usepackage{indentfirst}
\usepackage{multicol}
\usepackage{tabu}
\usepackage{url}
\usepackage{fancyhdr}
\usepackage[numbers]{natbib}
\usepackage[all]{xy}
\usepackage{mathtools}
\usepackage[english]{babel}
\usepackage{colortbl}
\usepackage{caption}
\usepackage{hyperref}\usepackage{subcaption}
\usepackage{float}\usepackage{enumitem}\usepackage{xcolor}\usepackage{tikz-cd}\usepackage{bm}
\usepackage{tikz}
\usetikzlibrary{graphs, positioning, arrows.meta, shapes.geometric, calc}

\newtheorem{theorem}{Theorem}[section]

\newtheorem{lemma}[theorem]{Lemma}
\newtheorem{proposition}[theorem]{Proposition}

\theoremstyle{definition}
\newtheorem{definition}[theorem]{Definition}
\newtheorem{example}[theorem]{Example}
\newtheorem{remark}[theorem]{Remark}

\DeclareMathOperator{\im}{im}

\DeclareMathOperator{\clst}{ClSt}
\DeclareMathOperator{\Lk}{Lk}
\DeclareMathOperator{\st}{St}

\DeclareMathOperator{\Clq}{Clq}

\definecolor{myblue}{RGB}{50, 110, 190}
\definecolor{mygreen}{RGB}{40, 160, 80}
\definecolor{myred}{RGB}{210, 60, 60}
\definecolor{mypurple}{RGB}{140, 60, 180}
\title{Local Laplacian: theory and models for data analysis}

\author[1]{Jian Liu}%
\author[2]{Hongsong Feng}
\author[1]{Kefeng Liu\thanks{Corresponding author: kefengliu@cqut.edu.cn}}
\affil[1]{Mathematical Science Research Center, Chongqing University of Technology, Chongqing 400054, China}
\affil[2]{Department of Mathematics and Statistics, University of North Carolina at Charlotte, Charlotte, NC 28223, USA}

\makeatletter
    \renewcommand*{\@fnsymbol}[1]{\ensuremath{\ifcase#1\or \dagger\or *\or *\or
   \mathsection\or \else\@ctrerr\fi}}
\makeatother
\date{}

\begin{document}
    \maketitle
    
    \paragraph{Abstract}
    
    While topological data analysis has emerged as a powerful paradigm for structural inference, its foundational tools, notably persistent homology and the persistent Laplacian, are frequently insensitive to localized structural fluctuations and suffer from prohibitive computational costs on large-scale datasets. To bridge this gap, we introduce the persistent local Laplacian formalism, which is designed to extract fine-grained local topological and geometric signatures while enabling a highly efficient, parallelizable computational workflow. On the theoretical front, we prove a generalized persistent Hodge isomorphism, establishing that the harmonic space of the persistent local Laplacian is isomorphic to the persistent local homology. Furthermore, we derive a unitary equivalence between the persistent local Laplacian and the persistent Laplacian of its corresponding link complex at a shifted dimension. This spectral conjugacy establishes the mathematical foundation for developing efficient computational schemes to resolve persistent local spectral invariants. We further extend this construction to point clouds and graph-structured data, characterizing their persistent local spectral properties through combinatorial filtrations. The resulting architecture is inherently decoupled, facilitating massive parallelization and rendering it uniquely scalable for large-scale network analysis and distributed computational environments.

    \paragraph{Keywords}
     Persistent homology, local homology, persistent local Laplacian, Hodge isomorphism, link complex.

\footnotetext[1]
{ {\bf 2020 Mathematics Subject Classification.}  	Primary 55N31; Secondary 58A14, 15A63.
}

\tableofcontents 

\section{Introduction}

On a manifold, singular, simplicial, and de Rham homology theories are naturally isomorphic, each offering a unique yet equivalent characterization of the underlying topology. Collectively, these theories interpret the same topological invariants through topological, combinatorial, and geometric prisms. While topological homology remains metric-independent and combinatorial approaches often lack local resolution, geometry provides a more nuanced description of the space. This geometric depth is most elegantly captured by the Laplacian operator: the Hodge Theorem identifies the harmonic space of the Laplacian with the manifold's homology, whereas the non-harmonic spectrum encodes the finer details of the manifold's asymmetric geometry and underlying metric structure.

In modern data science, simplicial complexes have emerged as a predominant topological representation for modeling data and objects. Although they are not manifolds in the strictest sense, they function as piecewise linear manifolds or combinatorial analogs. While the extraction of topological invariants from these complexes is well-established, the characterization of their intrinsic geometric features remains an ongoing challenge. Simplicial complexes admit a combinatorial Laplacian \cite{eckmann1944harmonische, hirani2003discrete, horak2013spectra}, which generalizes the classical graph Laplacian to higher dimensions \cite{lim2020hodge}. However, the standard combinatorial Laplacian often fails to resolve fine-grained local information. 

In contrast, local homology provides a refined, localized lens for characterizing the topological features of a space around a specific point \cite{hatcher2002algebraic, munkres2018elements}. The theoretical foundation of local homology is grounded in the excision theorem, which establishes a fundamental principle: the topological profile of a point is locally constrained. Specifically, this profile is determined solely by the point's infinitesimal surroundings, rendering it invariant to the global configuration of the ambient space. This progression is driven by two factors: analytical rigor and the practical demands of data-driven applications. A natural confluence of these ideas is the introduction of a local Laplacian on simplicial complexes. This construction serves a clear purpose: it bridges the gap between local topology and localized geometry.

In data analysis, static invariants often lack sufficient descriptive power to resolve complex structural nuances. A pivotal advancement in overcoming this limitation is the paradigm of \textit{``persistence''}, which facilitates the tracking of topological and geometric features across multiple scales. Persistent homology, introduced by Carlsson, Edelsbrunner, and others, characterizes multiscale data structures by tracking the evolution of connected components, loops, and voids \cite{cohen2007stability, edelsbrunner2008persistent, zomorodian2004computing}. Recently, this multiscale philosophy has been extended to spectral theory, leading to the development of the persistent Laplacian and its associated spectral invariants \cite{chen2021evolutionary, memoli2022persistent, wang2020persistent}.

Despite the efficacy of global persistent homology and Laplacians in capturing macroscopic data topology, these approaches often remain insensitive to the local structural nuances that define the fine-grained geometry of data. To bridge this gap, persistent local homology was introduced \cite{fasy2016exploring}, with subsequent research extending its utility to graph-structured data and neural network architectures \cite{cesa2023algebraic, wang2024persistent}. Nevertheless, a systematic study of the persistent local Laplacian remains conspicuously absent from the literature. We propose that the persistent local Laplacian is essential for addressing the complexities of hierarchical data processing, providing a high-resolution, multiscale signature of local topological and geometric evolutions.

\begin{figure}[htb!]
    \centering
    \begin{tikzpicture}[
        >=Stealth, thick,
        box/.style={
            rectangle, draw, rounded corners=3pt, 
            minimum width=5.0cm, minimum height=1.4cm, 
            align=center, font=\small\sffamily, thick
        },
        arrow label/.style={
            font=\scriptsize\bfseries, 
            align=center, 
            midway
        }
    ]

        \definecolor{myblue}{RGB}{50, 110, 190}
        \definecolor{mygreen}{RGB}{40, 160, 80}
        \definecolor{myred}{RGB}{210, 60, 60}
        \definecolor{mypurple}{RGB}{140, 60, 180}

        \node[box, draw=myblue!80, fill=myblue!5] (static0d) at (0,0) {
            \textbf{Classical Graph Laplacian} \\
            $\Delta_0 = d_1 d_1^\ast = L$
        };
        
        \node[box, draw=mygreen!80, fill=mygreen!5] (static1d) at (0, 4.5) {
            \textbf{Combinatorial Laplacian} \\
            $\Delta_n = d_n^\ast d_n + d_{n+1} d_{n+1}^\ast$
        };
        
        \node[box, draw=myred!80, fill=myred!5] (dynamic0d) at (7.5, 0) {
            \textbf{Persistent Laplacian} \\
            $\Delta_0^{i,j} = \delta_1^{i,j} (\delta_1^{i,j})^\ast$
        };
        
        \node[box, draw=mypurple!90, fill=mypurple!8, thick] (pll) at (7.5, 4.5) {
            \textbf{Persistent Local Laplacian} \\
             $\text{Localization} + \text{Persistence} + \text{Higher-D}$
        };

        \draw[->, mygreen, line width=1.5pt] (static0d.north) -- (static1d.south) 
            node[arrow label, left=5pt, text=black] {Higher-D \\ ($n$-complex)};

        \draw[->, myred, line width=1.5pt] (static0d.east) -- (dynamic0d.west)
            node[arrow label, below=5pt, text=black] {Persistence \\ (Filtration)};


        \draw[->, mypurple, line width=2pt, dotted] (static0d.north east) .. controls (3.75, 1.0) and (3.75, 3.5) .. (pll.south west);

       \draw[->, mypurple, line width=1.5pt, dashed] (static1d.east) -- (pll.west)
            node[arrow label, above=3pt, text=black] {Persistence}
            node[arrow label, below=3pt, text=black] {Localization};

        \draw[->, mypurple, line width=1.5pt, dashed] (dynamic0d.north) -- (pll.south)
            node[arrow label, right=3pt, text=black] {Higher-D}
            node[arrow label, left=3pt, text=black] {Localization};
    \end{tikzpicture}
    \caption{Schematic of the Laplacian family: from classical graph Laplacian to persistent local Laplacian.}
\end{figure}
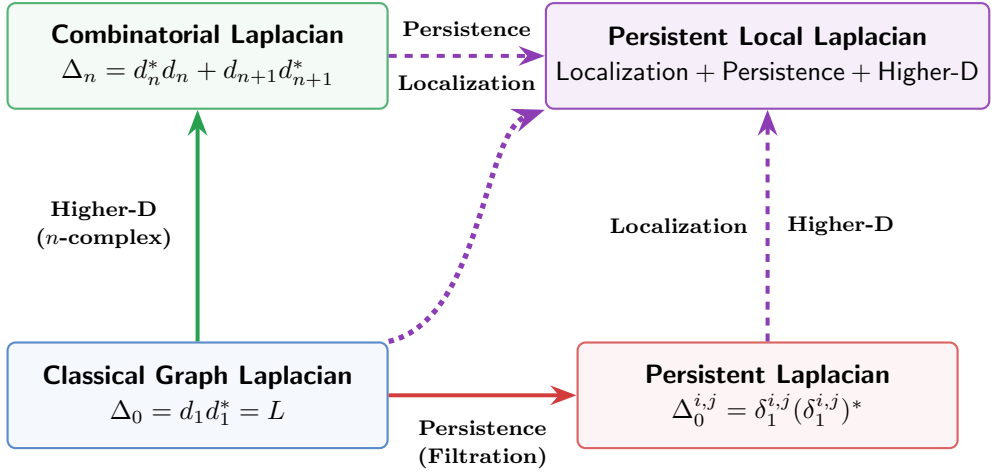

From an application perspective, the significance of the persistent local Laplacian extends well beyond mere conceptual generalization. The inherent computational bottlenecks of standard persistent homology and global persistent Laplacians frequently constrain their applicability to large-scale datasets. The persistent local Laplacian addresses these limitations by providing two primary operational advantages:
\begin{itemize}
    \item \textbf{Computational Efficiency:} Restricting spectral analysis to local subcomplexes (links) significantly reduces memory overhead and algorithmic complexity.
    \item \textbf{Inherent Scalability:} Local feature extraction is naturally decoupled, facilitating massive parallelization for large-scale network analysis and distributed computing.
\end{itemize}

Let $K$ be a simplicial complex with its associated chain complex $C_{\ast}(K)$. The classical $n$-th combinatorial Laplacian $\Delta_n\colon C_{n}(K)\to C_{n}(K)$ is defined as
\begin{equation*}
  \Delta_n = d_{n+1} d_{n+1}^\ast + d_n^\ast d_n,
\end{equation*}
where $d_{\ast}$ denotes the boundary operator. For $n=0$, $\Delta_0$ coincides with the graph Laplacian of the 1-skeleton of $K$. Notably, for a graph $G$, the Laplacian of its clique complex $\mathrm{Clq}(G)$ at dimension zero recovers the standard graph Laplacian \cite{kozlov2008combinatorial}. This property ensures that higher-dimensional spectral information is intrinsically preserved in graph-based data.

Consider a simplicial embedding $f: K \hookrightarrow L$, which induces an injective chain map $f_\ast: C_{\ast}(K) \hookrightarrow C_{\ast}(L)$. This gives rise to the persistent Laplacian
\begin{equation*}
\Delta_n^f = \delta_{f_n} \delta_{f_n}^* + d_n^* d_n,
\end{equation*}
where the term $\delta_{f_n} \delta_{f_n}^*$ characterizes the ``persistence''. For a general simplicial map $f: K \to L$ that is not necessarily an embedding, we introduce the generalized persistent Laplacian
\begin{equation*}
\Delta_n^f = \delta_{f_n} \delta_{f_n}^* + d_n^* d_n + (\mathrm{id} - f_n^\dagger f_n),
\end{equation*}
where $f_n^\dagger$ denotes the Moore-Penrose pseudoinverse of $f_n$. Consequently, we establish a generalized persistent Hodge isomorphism: the kernel of the persistent Laplacian $\ker \Delta_n^f$ is isomorphic to the persistent homology $H_n^f$ (see Theorem \ref{theorem:hodge_isomorphism}).

Transitioning to the localized setting, let $v$ be a vertex in $K$. The local homology at $v$ is characterized by its \emph{link complex}, $\mathrm{Lk}_K(v)=\{\sigma \in K \mid v\notin \sigma,\sigma \cup \{v\} \in K\}$, via the fundamental isomorphism
\begin{equation*}
    H_n(K, K \setminus \{v\}) \cong \tilde{H}_{n-1}(\mathrm{Lk}_K(v)).
\end{equation*}
Crucially, the link complex also characterizes the \emph{local Laplacian}. Specifically, there exists a \emph{unitary equivalence} between the local Laplacian and the $(n-1)$-th Laplacian of the link complex (Theorem \ref{theorem:laplacian_conjugacy}):
\begin{equation*}
    \Delta_n^{K,v} \cong \Delta_{n-1}^{\mathrm{Lk}}.
\end{equation*}
Extending this to the persistence setting, we establish a unitary equivalence (Theorem \ref{theorem:persistence_link}):
\begin{equation*}
    \Delta_n^{i,j} \cong \Delta_{n-1}^{\mathrm{Lk},i,j},
\end{equation*}
which identifies the persistent local Laplacian with the persistent Laplacian of the corresponding link complex at a shifted dimension. This result shows that the computation of persistent local Laplacians can be reduced to the construction and spectral analysis of the corresponding persistent link complexes.

For practical data analysis, we consider point clouds and graph-structured data, where the Vietoris-Rips and clique complexes exhibit canonical filtrations. We provide combinatorial characterizations of their link complexes, facilitating the efficient computation of the persistent local Laplacian.

Overall, this work provides a systematic investigation into the persistent local Laplacian, establishing the mathematical foundation for its algorithmic computation. We provide formal definitions and structural results at the algebraic level, complemented by the explicit construction and characterization of the persistent local Laplacian for data-driven applications. We hope this paper will be of interest to researchers in topological data analysis, combinatorial geometry, and spectral graph theory.

The remainder of this paper is organized as follows. In the next section, we review the fundamental concepts and computational aspects of local homology and local Laplacians. Section \ref{section:generalized_Laplacian} establishes the theoretical framework for the persistent local Laplacian, providing its formal definition and associated results. Finally, in Section \ref{section:on_datasets}, we detail the construction and computation of the persistent local Laplacian on various datasets, including point clouds and graph-structured data.

\section{Local Laplacians on simplicial complexes}\label{section:preliminaries}

Local homology is a classical tool for characterizing the localized topological features of a space. By capturing relative homological data at a specific site, it provides a rigorous measure of local manifold structures and singularities. Building on this topological intuition, we introduce the local Laplacian, an operator that encapsulates both harmonic and non-harmonic combinatorial information of a simplicial complex within a localized regime, thereby extending traditional spectral analysis to a local perspective.

Throughout this work, we fix a base field $\mathbb{K}$, which will typically be $\mathbb{R}$ or $\mathbb{C}$.

\subsection{Local homology of simplicial complexes}

\subsubsection{Relative homology}

Let $K$ be a simplicial complex and $L \subseteq K$ a subcomplex. The $n$-th \textit{relative chain group}, denoted by $C_n(K, L)$, is defined as the quotient of the absolute chain groups
\[
C_n(K, L) := C_n(K) / C_n(L),
\]
where $C_n(K)$ and $C_n(L)$ are the free modules generated by the $n$-simplices of $K$ and $L$ respectively, over a field $\mathbb{K}$.

Since the boundary operator $d_n\colon C_n(K) \to C_{n-1}(K)$ maps $C_n(L)$ into $C_{n-1}(L)$, it naturally induces a quotient homomorphism on relative chains
\[
\bar{d}_n\colon C_n(K, L) \to C_{n-1}(K, L).
\]
It follows immediately from the relation $d^2 = 0$ that $\bar{d}^2 = 0$, thus yielding a chain complex $(C_*(K, L), \bar{d}_*)$.
\begin{definition}
The $n$-th \textit{relative homology group} is defined as the homology of this quotient complex
\[
H_n(K, L) := \frac{\ker \bar{d}_n}{\im \bar{d}_{n+1}}.
\]
\end{definition}

Intuitively, $H_n(K, L)$ characterizes the topological cycles of $K$ whose boundaries are contained within $L$, effectively collapsing the subcomplex $L$ to a point and highlighting the homological features of the pair $(K, L)$ that are extrinsic to $L$.

\begin{remark}
For a topological pair $(X, Y)$ with $Y \subseteq X$, one can define the relative singular homology $H_n(X, Y)$. Correspondingly, any simplicial complex $K$ possesses a geometric realization $|K|$, which is a compact Hausdorff space. For any subcomplex $L \subseteq K$, there is a naturally isomorphism
\begin{equation*}
    H_n(K, L) \xrightarrow{\cong} H_n(|K|, |L|), \quad \forall n \ge 0
\end{equation*}
between the relative simplicial homology and the relative singular homology.
\end{remark}

\subsubsection{Local homology}

Let $K$ be a simplicial complex, and let $v \in K$ be a vertex.
\begin{definition}
The \textit{local homology} of $K$ at $v$ at dimension $n$, denoted by $H_n(|K|, |K| \setminus \{v\})$, is defined as the $n$-th homology of the relative chain complex derived from the inclusion of the deleted subcomplex $|K| \setminus \{v\} \hookrightarrow |K|$.
\end{definition}

From a structural perspective, $H_n(|K|, |K| \setminus \{v\})$ measures the obstruction to $v$ being ``homologically trivial'' within its immediate neighborhood. Due to the excision property of relative homology, these groups are purely local invariants: for any open neighborhood $U$ of $v$ in the geometric realization $|K|$, the excision theorem yields a canonical isomorphism
\[
H_n(|K|, |K| \setminus \{v\}) \cong H_n(U, U \setminus \{v\}).
\]

The local homology groups $H_n(|K|, |K| \setminus \{v\})$ provide a formal characterization of the local topological type at a vertex $v$, effectively decoupling the site-specific invariants from the global structure of the space. By leveraging the excision property to isolate the contribution of the vertex, these groups serve as algebraic probes for detecting local singularities and identifying manifold-like configurations within the combinatorial data.

\subsubsection{Closed star, link, and open star}

The \textit{closed star} of $x$, denoted $\clst_K(v)$, is the subcomplex of all simplices in $K$ containing $v$ and their faces
\[
\clst_K(v) = \{ \tau \in K \mid \exists \sigma \in K \text{ with } v \in \sigma \text{ and } \tau \subseteq \sigma \}.
\]
The \textit{link} of a point $v$ in $K$ is the set of simplices in $\clst_K(v)$ that do not contain $v$, given by
\[
\Lk_K(v) = \{ \tau \in \clst_K(v) \mid v \notin \tau \}.
\]
The \textit{open star} $\st_K(v) \subset |K|$ is the union of the interiors of simplices containing $v$, an open set in $|K|$. For a simplex $\sigma \in K$, its interior in $|K|$ (under the weak topology) is the set of points in $|K|$ where the barycentric coordinate corresponding to $\sigma$ is positive. The open star is
\[
\st_K(v) = \bigcup_{\sigma \in K, v \in \sigma} \text{int}(\sigma),
\]
where $\text{int}(\sigma)$ is the interior of $\sigma$ in $|K|$. In the geometric realization, $\st_K(v)$ is an open neighborhood of $v$, as it excludes the boundaries of simplices containing $v$.

By the excision theorem, for the open star $\st_K(v)$, we have
\[
H_n(|K|, |K| \setminus \st_K(v)) \cong H_n(\st_K(v), \st_K(v) \setminus \{v\}).
\]
Furthermore, we obtain
\[
H_n(|K|, |K| \setminus \st_K(v)) \cong H_n(\clst_K(v), \Lk_K(v)).
\]
Note that $\Lk_K(v)$ is contractible. By the long exact sequence of homology, we have the isomorphism
\[
H_n(|K|, |K| \setminus \st_K(v)) \cong \tilde{H}_{n-1}(\Lk_K(v)),
\]
where $\tilde{H}_*$ denotes reduced homology. This isomorphism provides a direct method for calculating local homology.

\subsubsection{Computing local homology}

Let $K$ be a simplicial complex, and let $v$ be a vertex of $K$. The local homology groups $H_n(|K|, |K| \setminus \{v\})$ capture the local topological structure of the geometric realization $|K|$ near $v$.

Let $L$ be a subcomplex of $K$ containing the closed star of $v$. By the excision theorem, the inclusion $L \hookrightarrow K$ induces an isomorphism
\[
H_n(|L|, |L| \setminus \{v\}) \cong H_n(|K|, |K| \setminus \{v\})
\]
for $n\geq 0$.

For large complexes $K$, direct computation of these local homology groups can be computationally demanding. Therefore, one seeks a suitable subcomplex $L \subseteq K$ (such as the closed star of $v$) that contains $v$ and is small enough to simplify the calculation while ensuring the isomorphism holds.

For a vertex $v \in K$, we define the \emph{deleted subcomplex} $K \setminus \{v\}$ as the maximal subcomplex of $K$ disjoint from the open star $\mathrm{st}_K(v)$. There exists a natural isomorphism
\begin{equation*}
    H_n(|K|, |K| \setminus \mathrm{st}_K(v)) \cong H_n(K, K \setminus \{v\}), \quad \forall n \ge 0.
\end{equation*}
Since the topological pair $(|K|, |K| \setminus \mathrm{st}_K(v))$ is homotopy equivalent to $(|K|, |K| \setminus \{v\})$, we shall, for conciseness, adopt $H_n(K, K \setminus \{v\})$ as the formal notation for the \emph{local homology} of $K$ at $v$. Furthermore, this leads to the fundamental isomorphism
\begin{equation*}
    H_n(K, K \setminus \{v\}) \cong \tilde{H}_{n-1}(\mathrm{Lk}_K(v)),
\end{equation*}
which establishes a purely combinatorial bridge between local homology and reduced homology of the link of a vertex.

\subsubsection{Local homology and interaction homology}

Following the foundational work in \cite{knill2018cohomology,liu2023interaction}, interaction homology provides a robust invariant for capturing multi-body relationships within simplicial complexes, with significant applications in molecular biology and materials science \cite{chen2025interaction,liu2025persistent}.

Let $K$ and $L$ be simplicial complexes with associated chain complexes $C_\ast(K)$ and $C_\ast(L)$ over $\mathbb{K}$. Their tensor product $(\mathcal{C}_\ast, d)$ is a chain complex, with boundary given by the standard Leibniz rule. Consider the subcomplex $D_\ast(\{K, L\}) \subseteq \mathcal{C}_\ast$ generated by elementary tensors with disjoint support
\begin{equation*}
    S_{\emptyset} = \{ \sigma \otimes \tau \in \mathcal{C}_\ast \mid \sigma \cap \tau = \emptyset \}.
\end{equation*}
The \emph{interaction chain complex} is then given by the quotient complex
\begin{equation*}
    IC_\ast(\{K, L\}) = \mathcal{C}_\ast / D_\ast(\{K, L\}).
\end{equation*}

\begin{definition}
The \emph{interaction homology} of $\{K, L\}$ is defined as the homology of the interaction chain complex
\begin{equation*}
    H_n(\{K, L\}) = H_n(IC_\ast(\{K, L\})), \quad n \ge 0.
\end{equation*}
\end{definition}

The \emph{Wu characteristic} of $K$, as introduced in \cite{wen1959topologie}, is defined by $\omega(K) = \sum_{\sigma \cap \tau \neq \emptyset} (-1)^{\dim \sigma + \dim \tau}$. This invariant enumerates the weighted intersection patterns of all simplices within $K$.

\begin{theorem}[\cite{knill2018cohomology, liu2023interaction}]
For $K=L$, the interaction Betti numbers $\beta_{n} = \mathrm{rank}(H_n(K, K))$ provide a categorical refinement of the Wu characteristic, yielding the identity
\begin{equation*}
    \omega(K) = \sum_{n \ge 0} (-1)^n \beta_{n}.
\end{equation*}
\end{theorem}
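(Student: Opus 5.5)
The plan is an Euler–Poincaré argument applied to the finite-dimensional interaction chain complex $IC_\ast(\{K,K\})$. Over the field $\mathbb{K}$, every chain group is finite-dimensional because $K$ is finite. So the alternating sum of the dimensions of the chain groups equals the alternating sum of the Betti numbers. The identity then reduces to computing $\sum_n (-1)^n \dim IC_n(\{K,K\})$ and checking that it equals $\omega(K)$.

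\emph{Step 1: a basis for each chain group.} The tensor product $\mathcal{C}_n=\bigoplus_{p+q=n} C_p(K)\otimes C_q(K)$ has as basis the elementary tensors $\sigma\otimes\tau$ with $\dim\sigma+\dim\tau=n$. The subcomplex $D_n(\{K,K\})$ is spanned by the subset $S_\emptyset$ of these tensors, namely those with $\sigma\cap\tau=\emptyset$. It is indeed a subcomplex: the Leibniz boundary of $\sigma\otimes\tau$ involves only $\partial\sigma\otimes\tau$ and $\sigma\otimes\partial\tau$, and faces of disjoint simplices remain disjoint. Because $D_n$ is spanned by a subset of a basis, the quotient $IC_n$ has as basis the classes of $\sigma\otimes\tau$ with $\sigma\cap\tau\neq\emptyset$ and $\dim\sigma+\dim\tau=n$.

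\emph{Step 2: the dimension count.} From Step 1,
\[
\sum_n (-1)^n\dim IC_n=\sum_{\sigma\cap\tau\neq\emptyset}(-1)^{\dim\sigma+\dim\tau}=\omega(K).
\]
Here the sum runs over ordered pairs, matching the definition of $\omega$. One point needs care: the grading convention. I will use the dimension grading, so that $\sigma\otimes\tau$ sits in degree $\dim\sigma+\dim\tau$. A shift to the cardinality grading $|\sigma|+|\tau|$ changes every term by $(-1)^2=1$, so the sign bookkeeping is unaffected.

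\emph{Step 3: Euler–Poincaré.} For a bounded complex of finite-dimensional vector spaces, rank–nullity gives $\dim IC_n=\dim\ker\bar d_n+\dim\operatorname{im}\bar d_n$ and $\beta_n=\dim\ker\bar d_n-\dim\operatorname{im}\bar d_{n+1}$. Summing with signs makes the image terms telescope, so $\sum_n(-1)^n\dim IC_n=\sum_n(-1)^n\beta_n$. Combining this with Step 2 proves the identity.

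The main obstacle is not analytic. It is making sure the definitions line up: that $\omega$ counts ordered pairs including $\sigma=\tau$, consistent with the tensor basis, and that the notation $H_n(K,K)$ in the statement means interaction homology, not relative homology (which would vanish). I would state both conventions explicitly at the start of the proof.
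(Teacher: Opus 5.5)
Your proposal is correct and is the standard argument. The paper gives no proof of its own and simply cites Knill and Liu et al.; there the identity is precisely the Euler--Poincar\'e principle applied to the finite complex whose basis is the intersecting pairs $\sigma\otimes\tau$, which is what you do.

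Your explicit reading of $H_n(K,K)$ as the interaction homology $H_n(\{K,K\})$ is a worthwhile clarification of the statement's notation, since the relative homology of the pair $(K,K)$ would vanish. The same goes for your check that $\omega$ counts ordered pairs including $\sigma=\tau$.
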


Furthermore, there exists an intrinsic theoretical correspondence between interaction homology and local homology.

\begin{theorem}
Let $K$ be a simplicial complex and $v$ be a vertex in $K$. There is an isomorphism between the interaction homology and the local homology
\begin{equation*}
    H_n(\{K, \{v\}\}) \cong H_n(K, K \setminus \{v\}), \quad n \ge 0.
\end{equation*}
\end{theorem}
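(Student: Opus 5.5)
We prove the isomorphism at the level of chain complexes, by identifying $IC_\ast(\{K,\{v\}\})$ directly with the relative chain complex $C_\ast(K, K\setminus\{v\})$ up to a shift-free isomorphism. Here $\{v\}$ is the one-vertex complex, so $C_\ast(\{v\})$ is $\mathbb{K}$ concentrated in degree $0$, spanned by the single generator $v$. The plan is to compute the tensor product, then the quotient by disjoint-support tensors, and finally compare it with the quotient of $C_\ast(K)$ by the deleted subcomplex.

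First, $\mathcal{C}_n = C_n(K)\otimes C_0(\{v\})$, with basis $\{\sigma\otimes v : \sigma \in K,\ \dim\sigma = n\}$. Since $d v = 0$, the Leibniz rule gives
\begin{equation*}
d(\sigma\otimes v) = d\sigma\otimes v \pm \sigma\otimes dv = d\sigma\otimes v .
\end{equation*}
Hence $\sigma\mapsto \sigma\otimes v$ is a chain isomorphism $C_\ast(K)\cong\mathcal{C}_\ast$. Under this identification, the generators of $D_\ast(\{K,\{v\}\})$ are the $\sigma\otimes v$ with $\sigma\cap\{v\}=\emptyset$, that is, the simplices $\sigma$ not containing $v$. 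So $D_\ast$ corresponds to the span of all simplices of $K$ avoiding $v$.

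Second, we check that this span is exactly $C_\ast(K\setminus\{v\})$. The deleted subcomplex was defined as the maximal subcomplex disjoint from the open star $\st_K(v)$. A simplex meets $\st_K(v)$ precisely when it contains $v$: its interior, or the interior of one of its faces, lies in $\st_K(v)$ exactly when some face contains $v$. Hence $K\setminus\{v\}=\{\sigma\in K : v\notin\sigma\}$. This set is closed under faces, so it is a genuine subcomplex and in particular a subchain complex. This confirms that $D_\ast$ is a subcomplex, which one can also see directly, since faces of a simplex missing $v$ still miss $v$.

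Third, since the chain isomorphism $C_\ast(K)\cong\mathcal{C}_\ast$ carries $C_\ast(K\setminus\{v\})$ onto $D_\ast$, it descends to a chain isomorphism of quotients
\begin{equation*}
C_\ast(K,K\setminus\{v\}) \cong IC_\ast(\{K,\{v\}\}).
\end{equation*}
Taking homology gives $H_n(\{K,\{v\}\})\cong H_n(K,K\setminus\{v\})$ for all $n\ge 0$. Composing with the isomorphism $H_n(K,K\setminus\{v\})\cong\tilde H_{n-1}(\Lk_K(v))$ established earlier, one also obtains a link description of the interaction homology, though this is not needed for the statement.

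The only step requiring care is the second one: reconciling the combinatorial description of $D_\ast$ (simplices avoiding $v$) with the paper's definition of $K\setminus\{v\}$ via the open star, together with sign conventions in the Leibniz rule. The signs are harmless here because the second factor sits entirely in degree $0$ with zero differential, so no signs actually appear.
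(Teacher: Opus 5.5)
Your proposal is correct and follows essentially the same route as the paper's proof: identify $\mathcal{C}_\ast$ with $C_\ast(K)$ via $\sigma \mapsto \sigma\otimes v$, match $D_\ast$ with $C_\ast(K\setminus\{v\})$, then pass to quotients and homology. Your checks that the open-star definition of $K\setminus\{v\}$ gives exactly the simplices avoiding $v$, and that the degree-$0$ second factor introduces no Leibniz signs, fill in details the paper leaves implicit.
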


\begin{proof}
Consider the chain complex $C_\ast(\{v\})$ associated with the 0-simplex $\{v\}$. This complex is concentrated in degree 0, i.e., $C_0(\{v\}) \cong \mathbb{K} \cdot v$ and $C_i(\{v\}) = 0$ for $i > 0$. The tensor product complex $\mathcal{C}_n = \bigoplus_{i+j=n} C_i(K) \otimes C_j(\{v\})$ thus simplifies to
\begin{equation*}
    \mathcal{C}_n = C_n(K) \otimes C_0(\{v\}) \cong C_n(K).
\end{equation*}

Recall the definition of the subcomplex $D_n(\{K, \{v\}\})$, which is generated by elementary tensors with disjoint support
\begin{equation*}
    S_{\emptyset} = \{ \sigma \otimes v \in \mathcal{C}_n \mid \sigma \cap \{v\} = \emptyset \}.
\end{equation*}
The condition $\sigma \cap \{v\} = \emptyset$ is satisfied if and only if the vertex $v$ is not a face of $\sigma$, which means $\sigma \in K \setminus \{v\}$. Thus, the subcomplex $D_n$ is precisely the chain complex of the subcomplex $K \setminus \{v\}$, that is,
\begin{equation*}
    D_n(\{K, \{v\}\}) \cong C_n(K \setminus \{v\}).
\end{equation*}

The interaction chain complex is defined as the quotient
\begin{equation*}
    IC_n(\{K, \{v\}\}) = \mathcal{C}_n / D_n(\{K, \{v\}\}) \cong C_n(K) / C_n(K \setminus \{v\}).
\end{equation*}
The right-hand side is the definition of the relative chain complex $C_n(K, K \setminus \{v\})$. Since the boundary operator on $IC_\ast$ is induced by the tensor product differential (which coincides with the simplicial boundary operator $d_K$ in this degree-0 tensor case), we have an isomorphism of chain complexes
\begin{equation*}
    IC_\ast(\{K, \{v\}\}) \cong C_\ast(K, K \setminus \{v\}).
\end{equation*}
Taking the homology of both sides yields the desired result.
\end{proof}

\subsection{Local Laplacian}\label{section:local_Laplacian}

\subsubsection{Inner product structure and Hilbert space}

From now on, we restrict the coefficient field to $\mathbb{K} = \mathbb{R}$. Given that $K$ is a finite simplicial complex, each chain group $C_n(K)$ is a finite-dimensional vector space. We endow $C_n(K)$ with a canonical inner product by declaring the set of $n$-simplices $\{\sigma_i\}_{i \in I_n}$ to be an orthonormal basis. Formally, for any two $n$-simplices $\sigma_i, \sigma_j$, the inner product is defined as
\[
\langle \sigma_i, \sigma_j \rangle = \delta_{ij},
\]
where $\delta_{ij}$ is the Kronecker delta. By extending this bilinearly, for any chains $x = \sum_i a_i \sigma_i$ and $y = \sum_i b_i \sigma_i$ in $C_n(K)$, we have $\langle x, y \rangle = \sum_i a_i b_i$.

This construction equips $C_n(K)$ with the structure of a finite-dimensional \textit{Hilbert space}. Consequently, the entire chain complex $(C_\ast(K), d_\ast)$ can be viewed as a direct sum of Hilbert spaces.

\subsubsection{Combinatorial Laplacian}

The Hilbert structure on $C_\ast(K)$ allows for the definition of the \textit{coboundary operator} (or adjoint boundary operator) $d_n^\ast\colon C_{n-1}(K) \to C_n(K)$. This operator is uniquely determined by the adjoint relation
\[
\langle d_n x, y \rangle_{C_{n-1}} = \langle x, d_n^\ast y \rangle_{C_n}, \quad \forall x \in C_n(K), y \in C_{n-1}(K).
\]

Through the interplay of the boundary and coboundary operators, we recover the combinatorial analogue of the Laplace-Beltrami operator.

\begin{definition}
The \textit{$n$-th combinatorial Laplacian} $\Delta_n\colon C_n(K) \to C_n(K)$ is defined as the self-adjoint operator
\[
\Delta_n := d_{n+1} d_{n+1}^\ast + d_n^\ast d_n,\quad n\geq 0.
\]
For the base case $n=0$, the operator reduces to the graph Laplacian $\Delta_0 = d_1 d_1^\ast$, as $d_0$ is the zero map.
\end{definition}

The Laplacian $\Delta_n$ exhibits several fundamental properties essential for spectral analysis:
\begin{itemize}
    \item \textit{Self-adjointness}: $\langle \Delta_n x, y \rangle = \langle x, \Delta_n y \rangle$ for all $x, y \in C_n(K)$.
    \item \textit{Positive semi-definiteness}: $\langle \Delta_n x, x \rangle = \|d_{n+1}^\ast x\|^2 + \|d_n x\|^2 \geq 0$, implying a non-negative spectrum $\mathbf{Spec}(\Delta_n) \subset [0, \infty)$.
\end{itemize}

A cornerstone of combinatorial Hodge theory is the identification of the kernel of $\Delta_n$ with the topology of the complex. The elements in $\ker \Delta_n$ are referred to as \textit{combinatorial harmonic $n$-chains}.

\begin{theorem}[Simplicial Hodge theorem]
For a finite simplicial complex $K$, there exists a canonical isomorphism
\[
\ker \Delta_n \cong H_n(K; \mathbb{R}),\quad n\geq 0.
\]
\end{theorem}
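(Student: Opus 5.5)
We prove the Simplicial Hodge theorem by a finite-dimensional orthogonal decomposition of $C_n(K)$ into three pieces, using only the inner product fixed above and the relation $d_n d_{n+1}=0$. The plan is to show that $\ker \Delta_n$ equals $\ker d_n \cap \ker d_{n+1}^\ast$. We then identify this subspace with the orthogonal complement of $\im d_{n+1}$ inside $\ker d_n$, which projects isomorphically onto the quotient $\ker d_n/\im d_{n+1} = H_n(K;\mathbb{R})$.

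\textbf{Step 1 (kernel characterization).} If $x\in\ker d_n\cap\ker d_{n+1}^\ast$, then clearly $\Delta_n x = 0$. Conversely, suppose $\Delta_n x=0$. The positive semi-definiteness identity recorded above gives
\[
0=\langle \Delta_n x,x\rangle=\|d_{n+1}^\ast x\|^2+\|d_n x\|^2 .
\]
Both terms therefore vanish, and $\ker\Delta_n=\ker d_n\cap\ker d_{n+1}^\ast$.

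\textbf{Step 2 (orthogonality).} In a finite-dimensional real inner product space we have $\ker d_{n+1}^\ast=(\im d_{n+1})^\perp$. Since $d_nd_{n+1}=0$, we have $\im d_{n+1}\subseteq\ker d_n$. Hence
\[
\ker\Delta_n=\ker d_n\cap(\im d_{n+1})^\perp ,
\]
which is exactly the orthogonal complement of $\im d_{n+1}$ within the subspace $\ker d_n$. This yields the orthogonal decomposition $\ker d_n=\im d_{n+1}\oplus\ker\Delta_n$. Dually, $\ker d_n^{\perp}=\im d_n^\ast$, so we also obtain the full Hodge decomposition
\[
C_n(K)=\im d_{n+1}\oplus\ker\Delta_n\oplus\im d_n^\ast .
\]
The full decomposition is not needed for the statement.

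\textbf{Step 3 (the isomorphism).} Consider the composite $\ker\Delta_n\hookrightarrow\ker d_n\twoheadrightarrow\ker d_n/\im d_{n+1}$. It is injective, because $\ker\Delta_n\cap\im d_{n+1}=0$. It is surjective, because by Step 2 every cycle decomposes as $z=b+h$ with $b$ a boundary and $h$ harmonic. The map is canonical: it sends each harmonic chain to its homology class. Its inverse sends a class to its unique representative of minimal norm, namely the orthogonal projection of any representative onto $(\im d_{n+1})^\perp$.

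The case $n=0$ is included by the convention $d_0=0$. There is no real obstacle in this argument. The only point requiring care is Step 1, where the self-adjoint, positive semi-definite form of $\Delta_n$ is essential. Finite-dimensionality, which holds because $K$ is finite, is what guarantees the identity $\ker A^\ast=(\im A)^\perp$ and the existence of orthogonal complements without any closure issues.
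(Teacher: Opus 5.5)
Your proof is correct and is the standard Hodge-decomposition argument. The paper gives no proof of its own here, deferring to the cited references, which use exactly this route: first $\ker\Delta_n=\ker d_n\cap\ker d_{n+1}^\ast$ via the quadratic form, then $\ker d_n=\im d_{n+1}\oplus\ker\Delta_n$. It is also precisely what the paper's proof of Theorem~\ref{theorem:hodge_isomorphism} reduces to when $f=\mathrm{id}$, where injectivity comes from orthogonality to boundaries and surjectivity from the orthogonal decomposition of the cycle space.
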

The proof of the above result can be found in \cite{lim2020hodge,liu2024algebraic}.

\subsubsection{Inner product on the relative chain complex}

Let $(K, L)$ be a simplicial pair, where $L$ is a subcomplex of $K$. As $C_n(K)$ is a finite-dimensional Hilbert space, the subcomplex $C_n(L)$ constitutes a closed subspace. We define the inner product structure on the relative chain group $C_n(K, L) \cong C_n(K) / C_n(L)$ by invoking the orthogonal decomposition of the absolute chain groups. 

Specifically, let $C_n(L)^\perp$ denote the orthogonal complement of $C_n(L)$ within $C_n(K)$ with respect to the canonical inner product
\[
C_n(L)^\perp := \{ x \in C_n(K) \mid \langle x, y \rangle = 0, \, \forall y \in C_n(L) \}.
\]
The finite dimensionality of the complex ensures the direct sum decomposition $C_n(K) = C_n(L) \oplus C_n(L)^\perp$. Consequently, the quotient map $\pi_n\colon C_n(K) \to C_n(K, L)$ induces a canonical linear isomorphism between the orthogonal complement $C_n(L)^\perp$ and the relative chain group $C_n(K, L)$.

We endow $C_n(K, L)$ with the \textit{quotient inner product} such that $\theta_n=(\pi_n)|_{C_n(L)^\perp}\colon C_n(L)^\perp\to C_n(K, L)$ is an isometry. For any relative chains $\alpha, \beta \in C_n(K, L)$, the inner product is defined as
\[
\langle \alpha, \beta \rangle_{C_n(K, L)} := \langle x, y \rangle_{C_n(K)},
\]
where $x$ and $y$ are the unique representatives of $\alpha$ and $\beta$ in $C_n(L)^\perp$, respectively. Equivalently, this can be expressed as
\[
\langle [x], [y] \rangle = \langle \theta_n(x), \theta_n(y) \rangle,
\]
which is independent of the choice of representatives in $C_n(K)$. 

Under this metric structure, $C_\ast(K, L)$ becomes a chain complex of Hilbert spaces. This construction ensures that the relative boundary operator $\bar{d}_n\colon C_n(K, L) \to C_{n-1}(K, L)$ admits a well-defined adjoint.

\subsubsection{The relative combinatorial Laplacian}

To extend the spectral theory to the relative setting, we consider the induced operators on the quotient Hilbert complex $(C_\ast(K, L), \bar{d}_\ast)$. The \textit{relative boundary operator} $\bar{d}_n\colon C_n(K, L) \to C_{n-1}(K, L)$ is the unique homomorphism making the following diagram commute:
\[
\bar{d}_n \circ \pi_n = \pi_{n-1} \circ d_n,
\]
where $\pi_n\colon C_n(K) \to C_n(K, L)$ is the canonical quotient map. The well-definedness of $\bar{d}_n$ is guaranteed by the subcomplex condition $d_n(C_n(L)) \subseteq C_{n-1}(L)$.

\begin{definition}
The \textit{relative coboundary operator} $\bar{d}_n^{\ast}\colon C_{n-1}(K, L) \to C_n(K, L)$ is the Hilbert adjoint of $\bar{d}_n$ with respect to the quotient inner product, defined by the identity
\[
\langle \bar{d}_n \alpha, \beta \rangle_{C_{n-1}(K, L)} = \langle \alpha, \bar{d}_n^{\ast} \beta \rangle_{C_n(K, L)}, \quad \forall \alpha \in C_n(K, L), \beta \in C_{n-1}(K, L).
\]
\end{definition}

In practical computations, $\bar{d}_n^{\ast}$ can be realized via the orthogonal projection $P_n\colon C_n(K) \to C_n(L)^\perp$. Given a relative chain $\beta \in C_{n-1}(K, L)$, let $d \in C_{n-1}(L)^\perp$ be its unique isometric representative. The action of the relative coboundary operator is then given by
\[
\bar{d}_n^{\ast} \beta = \pi_n \left( P_n (d_n^{\ast} d) \right),
\]
where $d_n^{\ast}$ on the right-hand side denotes the adjoint operator in the absolute chain complex $C_\ast(K)$. The projection $P_n$ explicitly enforces the condition that the resulting gradient-like flow resides within the orthogonal complement of the subcomplex $L$. 

One can define the relative combinatorial Laplacian on a simplicial complex \cite{zhan2025combinatorial}.

\begin{definition}
The \textit{$n$-th relative combinatorial Laplacian} $\Delta_n^{K,L}\colon C_n(K, L) \to C_n(K, L)$ is the self-adjoint operator defined as
\[
\Delta_n^{K,L} := \bar{d}_{n+1} \bar{d}_{n+1}^{\ast} + \bar{d}_n^{\ast} \bar{d}_n.
\]
In the base case $n=0$, the operator simplifies to $\Delta_0^{K,L} = \bar{d}_{1} \bar{d}_{1}^{\ast}$.
\end{definition}

The relative Laplacian $\Delta_n^{K,L}$ inherits the essential spectral properties of its absolute counterpart: it is self-adjoint and positive semi-definite. Furthermore, it provides a spectral decomposition of the relative homology groups. The space of \textit{harmonic relative $n$-chains} is defined as $\mathcal{H}_n(K, L) = \ker \Delta_n^{K,L}$.

\begin{theorem}
Let $(K, L)$ be a finite simplicial pair and $C_n(K, L)$ be the $n$-th relative chain group equipped with the quotient inner product. There exists a canonical orthogonal decomposition of the relative chain space into orthogonal subspaces
\[
C_n(K, L) = \im \bar{d}_{n+1} \oplus \im \bar{d}_n^\ast \oplus \mathcal{H}_n(K, L),
\]
where $\bar{d}_{n+1}$ and $\bar{d}_n^\ast$ are the relative boundary and coboundary operators, respectively. Moreover, we have an canonical isomorphism
\[
\mathcal{H}_n(K, L)\cong H_n(K, L; \mathbb{R}).
\]
\end{theorem}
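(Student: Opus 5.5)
The plan is to run the standard finite-dimensional Hodge argument on the Hilbert chain complex $(C_\ast(K,L),\bar d_\ast)$ equipped with the quotient inner product. The argument uses only two facts: each $C_n(K,L)$ is a finite-dimensional inner product space, and $\bar d_n\bar d_{n+1}=0$. It never uses the particular simplicial structure.

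\textbf{Step 1: characterize the kernel.} Since $\Delta_n^{K,L}$ is self-adjoint and positive semi-definite, for any $\alpha\in C_n(K,L)$ we have
\[
\langle \Delta_n^{K,L}\alpha,\alpha\rangle=\|\bar d_{n+1}^\ast\alpha\|^2+\|\bar d_n\alpha\|^2 .
\]
Hence $\Delta_n^{K,L}\alpha=0$ if and only if $\bar d_n\alpha=0$ and $\bar d_{n+1}^\ast\alpha=0$. This gives
\[
\mathcal H_n(K,L)=\ker\bar d_n\cap\ker\bar d_{n+1}^\ast .
\]

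\textbf{Step 2: the orthogonal decomposition.} First, $\im\bar d_{n+1}\perp\im\bar d_n^\ast$, because
\[
\langle\bar d_{n+1}\beta,\bar d_n^\ast\gamma\rangle=\langle\bar d_n\bar d_{n+1}\beta,\gamma\rangle=0 .
\]
Next, use the finite-dimensional identities $(\im T)^\perp=\ker T^\ast$. These give
\[
(\im\bar d_{n+1}\oplus\im\bar d_n^\ast)^\perp=\ker\bar d_{n+1}^\ast\cap\ker\bar d_n=\mathcal H_n(K,L).
\]
Finite dimensionality then yields $C_n(K,L)=\im\bar d_{n+1}\oplus\im\bar d_n^\ast\oplus\mathcal H_n(K,L)$.

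\textbf{Step 3: the isomorphism with homology.} Apply the same identities to $\ker\bar d_n$. We have $\ker\bar d_n=(\im\bar d_n^\ast)^\perp$. Intersecting this with the decomposition from Step 2 gives
\[
\ker\bar d_n=\im\bar d_{n+1}\oplus\mathcal H_n(K,L),
\]
using $\im\bar d_{n+1}\subseteq\ker\bar d_n$. The composite $\mathcal H_n(K,L)\hookrightarrow\ker\bar d_n\to\ker\bar d_n/\im\bar d_{n+1}=H_n(K,L;\mathbb R)$ is therefore bijective. It is injective because $\mathcal H_n\cap\im\bar d_{n+1}=0$, and surjective by the direct sum. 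The map is canonical: it sends a harmonic chain to its class, and its inverse sends a class to its unique representative orthogonal to the boundaries, which is the minimal-norm representative.

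\textbf{Expected obstacle.} The only delicate point is making sure $\bar d_n^\ast$ is the genuine Hilbert adjoint with respect to the quotient inner product. This holds because $\theta_n\colon C_n(L)^\perp\to C_n(K,L)$ is an isometric isomorphism. Under $\theta$, $\bar d_n$ corresponds to $P_{n-1}d_n|_{C_n(L)^\perp}$, and its adjoint is $P_n d_n^\ast|_{C_{n-1}(L)^\perp}$, which matches the formula for $\bar d_n^\ast$ given earlier. Once this identification is in place, the rest is pure linear algebra. The edge case $n=0$ is handled by taking $\bar d_0=0$.
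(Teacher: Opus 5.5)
Your proposal is correct and is the same argument the paper relies on: the paper's proof only says that the classical absolute Hodge decomposition carries over verbatim to the finite-dimensional Hilbert complex $(C_\ast(K,L),\bar{d}_\ast)$. You have written out that argument in full, including the check that under $\theta$ the adjoint $\bar{d}_n^\ast$ corresponds to $P_n d_n^\ast$.
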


\begin{proof}
The proof follows the parallelly to the classical (absolute) Hodge decomposition by restricting the analysis to the space $C_n(K, L)$.
\end{proof}

\subsubsection{Local Laplacian}

For a vertex $v \in K$, we define the deleted subcomplex $L = K \setminus \{v\}$ as the maximal subcomplex of $K$ disjoint from the open star of $v$. The associated relative chain complex $C_\ast(K, K \setminus \{v\})$ thus provides an algebraic encoding of the local topology in the immediate vicinity of $v$.

\begin{definition}
The \textit{$n$-th local combinatorial Laplacian} at vertex $v$, denoted by $\Delta_n^{K,v}\colon C_n(K, K \setminus \{v\}) \to C_n(K, K \setminus \{v\})$, is defined as the self-adjoint operator
\[
\Delta_n^{K,v} := \bar{d}_{n+1} \bar{d}_{n+1}^{\ast} + \bar{d}_n^{\ast} \bar{d}_n,
\]
where $\bar{d}$ and $\bar{d}^\ast$ represent the boundary and coboundary operators induced on the quotient Hilbert space, respectively. In the base case, we define $\Delta_0^{K,v} = \bar{d}_{1} \bar{d}_{1}^{\ast}$.
\end{definition}

The local Laplacian is a canonical specialization of the relative Laplacian, specifically designed to characterize the topological invariants and spectral density localized at a singular point. 
Moreover, $\Delta_n^{K,v}$ is a self-adjoint, positive semi-definite operator whose spectral properties are intrinsically linked to the local geometry. Specifically, the kernel of this operator isomorphic to the local homology group $H_n(K, K \setminus \{v\})$.
\begin{theorem}
Let $K$ be a finite simplicial complex and $v \in K$ a vertex. We have an isomorphism of vector spaces
\[
\ker \Delta_n^{K,v} \cong H_n(K, K \setminus \{v\}; \mathbb{R}).
\]
Furthermore, each local homology class $[\alpha] \in H_n(K, K \setminus \{v\}; \mathbb{R})$ contains a unique harmonic representative $\gamma \in C_n(K, K \setminus \{v\})$ such that $\Delta_n^{K,v} \gamma = 0$.
\end{theorem}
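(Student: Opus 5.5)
The plan is to view this statement as the specialization of the relative Hodge theorem (the theorem just before the definition of the local Laplacian) to the pair $(K, L)$ with $L = K\setminus\{v\}$. By definition, $\Delta_n^{K,v}$ coincides with the relative Laplacian $\Delta_n^{K,L}$ for this pair. So the first assertion follows once we check that $L$ is a genuine subcomplex of $K$. It is: $K\setminus\{v\}$ is the maximal subcomplex disjoint from the open star, i.e.\ the set of simplices not containing $v$, and this set is closed under taking faces. Hence $d_n(C_n(L))\subseteq C_{n-1}(L)$, the quotient inner product and the relative operators $\bar d_n,\bar d_n^\ast$ are defined, and the isomorphism $\mathcal{H}_n(K,L)\cong H_n(K,L;\mathbb{R})$ gives $\ker\Delta_n^{K,v}\cong H_n(K,K\setminus\{v\};\mathbb{R})$.

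Since the relative Hodge theorem was proved only by analogy, I would make the key steps explicit. First, positivity: for $\gamma\in C_n(K,L)$,
\[
\langle \Delta_n^{K,v}\gamma,\gamma\rangle=\|\bar d_{n+1}^\ast\gamma\|^2+\|\bar d_n\gamma\|^2 .
\]
Therefore $\ker\Delta_n^{K,v}=\ker\bar d_n\cap\ker\bar d_{n+1}^\ast=\ker\bar d_n\cap(\im\bar d_{n+1})^\perp$.

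Second, decomposition: in the finite-dimensional Hilbert space $C_n(K,L)$ we have $\bar d_n\bar d_{n+1}=0$, so $\im\bar d_{n+1}\subseteq\ker\bar d_n$. Hence
\[
\ker\bar d_n=\im\bar d_{n+1}\oplus\bigl(\ker\bar d_n\cap(\im\bar d_{n+1})^\perp\bigr),
\]
and the composite $\ker\Delta_n^{K,v}\hookrightarrow\ker\bar d_n\to\ker\bar d_n/\im\bar d_{n+1}$ is a linear isomorphism. It is injective because the harmonic space meets $\im\bar d_{n+1}$ trivially. It is surjective by the orthogonal splitting.

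The uniqueness statement follows from the same splitting. Given a class $[\alpha]$ with $\alpha\in\ker\bar d_n$, write $\alpha=\bar d_{n+1}\beta+\gamma$ with $\gamma$ harmonic; then $\gamma$ represents $[\alpha]$. If $\gamma,\gamma'$ are two harmonic representatives, then $\gamma-\gamma'$ lies in $\im\bar d_{n+1}\cap(\im\bar d_{n+1})^\perp=0$. Concretely, $\gamma$ is the orthogonal projection of any cycle representative onto $(\im\bar d_{n+1})^\perp$, and so it is the minimal-norm representative.

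The only step needing care is that all adjoints are taken with respect to the quotient inner product. I would handle this by transporting everything through the isometry $\theta_n\colon C_n(L)^\perp\to C_n(K,L)$. Here $C_n(L)^\perp$ is spanned by the $n$-simplices containing $v$, and $\bar d_n$ corresponds to $P_{n-1}d_n$ restricted to these spans. In this way the relative complex becomes an honest finite-dimensional Hilbert chain complex, and the argument above applies verbatim.
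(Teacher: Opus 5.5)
Your proposal is correct and takes the same route the paper implicitly takes. The paper gives no separate proof of this theorem: it treats it as the specialization $L = K\setminus\{v\}$ of the relative Hodge theorem, whose proof is only sketched as ``parallel to the classical Hodge decomposition.'' Your positivity identity and orthogonal splitting $\ker\bar d_n=\im\bar d_{n+1}\oplus\bigl(\ker\bar d_n\cap(\im\bar d_{n+1})^\perp\bigr)$ supply exactly the omitted details, including uniqueness of the harmonic representative. The transport through $\theta_n$ does no harm but is not needed, since any inner product on the finite-dimensional spaces $C_n(K,L)$ already makes the argument go through verbatim.
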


The non-zero spectrum encapsulates the combinatorial curvature and the local connectivity structure, effectively isolating the relative contribution of the vertex to the global homological landscape. This construction facilitates a rigorous localized harmonic analysis, enabling the detection of local singularities and manifold-like configurations directly from the simplicial complexes.

\subsection{Computational local Laplacian}

\subsubsection{Computing local Laplacians}

Let $K$ be a simplicial complex, and let $v$ be a vertex in $K$.
Define the map $\phi\colon C_n(K, K \setminus \{v\}) \to C_{n-1}(\text{Lk}_K(v))$ by
\[
\phi([\tau]) = (-1)^n [v_1, \dots, v_n],\quad n\geq 1,
\]
where $\tau = [v, v_1, \dots, v_n]$ and $[v_1, \dots, v_n] \in \text{Lk}_K(v)$ is the induced orientation.

\begin{theorem}\label{theorem:chain_isomorphism}
Let $K$ be a simplicial complex and let $v \in K$ be a vertex. Then there exists an isomorphism of chain complexes
\[
\phi\colon C_{\ast}(K, K \setminus \{v\}) \;\longrightarrow\; C_{\ast-1}(\Lk_{K}(v)).
\]
Moreover, this chain isomorphism is an isometry with respect to the quotient inner product on $C_{\ast}(K, K \setminus \{v\})$ and the standard inner product on $C_{\ast-1}(\Lk_{K}(v))$.
\end{theorem}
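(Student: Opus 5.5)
We will verify three things in order: that $\phi$ is a well-defined linear bijection in each degree, that it commutes with the differentials, and that it carries an orthonormal basis to an orthonormal basis.

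\emph{Step 1: bijectivity.} The subcomplex $K\setminus\{v\}$ consists exactly of the simplices of $K$ not containing $v$. So $C_n(K\setminus\{v\})^\perp$ has orthonormal basis the $n$-simplices $\tau=[v,v_1,\dots,v_n]$ containing $v$, and the isometry $\theta_n$ identifies these with an orthonormal basis of $C_n(K,K\setminus\{v\})$. We fix the convention that $v$ is written first. A reordering of $v_1,\dots,v_n$ changes the sign of $\tau$ and of $[v_1,\dots,v_n]$ by the same permutation sign, so $\phi$ is well defined.

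The assignment $\tau\mapsto\tau\setminus\{v\}$ is a bijection from $n$-simplices containing $v$ onto $(n-1)$-simplices of $\Lk_K(v)$. This follows directly from the definitions: $\sigma\in\Lk_K(v)$ if and only if $v\notin\sigma$ and $\sigma\cup\{v\}\in K$.

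Hence $\phi$ sends an orthonormal basis to $\pm$ an orthonormal basis. This gives bijectivity and the isometry claim at once. In degree $0$, the only basis element is $[v]$, and it should go to the empty simplex, which generates $C_{-1}(\Lk_K(v))=\mathbb{R}$ in the augmented complex. We therefore interpret $C_{\ast-1}$ as the augmented chain complex of the link, consistent with the reduced homology in the isomorphism $H_n(K,K\setminus\{v\})\cong\tilde H_{n-1}(\Lk_K(v))$ stated earlier.

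\emph{Step 2: chain map.} This is the main step, and the sign $(-1)^n$ is exactly what makes it work. We compute
\[
d_n[v,v_1,\dots,v_n]=[v_1,\dots,v_n]+\sum_{i=1}^n(-1)^i[v,v_1,\dots,\hat v_i,\dots,v_n].
\]
The first term lies in $C_{n-1}(K\setminus\{v\})$, so it vanishes under $\pi_{n-1}$. Therefore
\[
\bar d_n[\tau]=\sum_{i=1}^n(-1)^i[v,v_1,\dots,\hat v_i,\dots,v_n],
\]
and applying $\phi$ in degree $n-1$ gives
\[
\phi\bar d_n[\tau]=(-1)^{n-1}\sum_{i=1}^n(-1)^i[v_1,\dots,\hat v_i,\dots,v_n].
\]

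On the other side, the link boundary indexes the vertices $v_1,\dots,v_n$ from position $0$, so
\[
\partial\phi[\tau]=(-1)^n\sum_{i=1}^n(-1)^{i-1}[v_1,\dots,\hat v_i,\dots,v_n]=(-1)^{n-1}\sum_{i=1}^n(-1)^i[\dots].
\]
The two expressions agree.

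The case $n=1$ needs separate care: there $\bar d_1[v,v_1]=-[v]$, which maps to $-\varnothing$, while $\partial(-[v_1])=-\varnothing$ under the augmentation. The case $n=0$ is trivial.

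\emph{Step 3: conclusion.} Combining the steps, $\phi$ is a degreewise isometric isomorphism commuting with the differentials, hence an isometric chain isomorphism.

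The only real obstacle is the sign and augmentation bookkeeping in Step 2. The paper's formula for $\phi$ is stated only for $n\ge1$, so we should state the degree-$0$ convention explicitly.
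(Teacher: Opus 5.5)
Your proof is correct and follows the paper's argument for the isometry. Representatives in $C_n(K\setminus\{v\})^\perp$ are combinations of simplices containing $v$, which $\phi$ relabels up to the uniform sign $(-1)^n$; your direct sign check of $\phi\bar d_n=\partial\phi$ fills in the chain-map part that the paper simply cites from Munkres.

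Your use of the augmented complex in degree $0$ is a real clarification the paper leaves implicit. Since $C_0(K,K\setminus\{v\})=\mathbb{R}[v]\neq 0$, $\phi$ can only be an isomorphism if $C_{-1}(\Lk_K(v))$ is taken to be the span of the empty simplex.
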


\begin{proof}
The chain isomorphism is established in \cite{munkres2018elements}.

Now, we verify that it preserves the quotient inner product on $C_n(K, K \setminus \{v\})$ with respect to the standard inner product on $C_{n-1}(\text{Lk}_K(v))$.

For $[x], [y] \in C_n(K, K \setminus \{v\})$, consider the representatives $x' = \sum\limits_{\tau = \{v, v_1, \dots, v_n\}} x'(\tau) \tau$ and $y' = \sum\limits_{\tau} y'(\tau) \tau \in C_n(K \setminus \{v\})^\perp$ of $[x]$ and $[y]$, respectively. Applying $\phi$, we have
\[
\phi([x]) = \sum_{\tau = [v, v_1, \dots, v_n]} x'(\tau) (-1)^n [v_1, \dots, v_n] = \sum_{\sigma \in \text{Lk}_K(v)} x'(\{v\} \cup \sigma) (-1)^n \sigma,
\]
\[
\phi([y]) = \sum_{\sigma \in \text{Lk}_K(v)} y'(\{v\} \cup \sigma) (-1)^n \sigma.
\]
Compute the inner product in $C_{n-1}(\text{Lk}_K(v))$, one has
\begin{align*}
  \langle \phi([x]), \phi([y]) \rangle_{C_{n-1}(\text{Lk}_K(v))}= & \sum_{\sigma \in \text{Lk}_K(v)} \left( x'(\{v\} \cup \sigma) (-1)^n \right) \left( y'(\{v\} \cup \sigma) (-1)^n \right)  \\
  = & \sum_{\sigma} x'(\{v\} \cup \sigma) y'(\{v\} \cup \sigma) (-1)^{2n}.
\end{align*}
It follows that
\[
\langle \phi([x]), \phi([y]) \rangle_{C_{n-1}(\text{Lk}_K(v))} = \sum_{\sigma \in \text{Lk}_K(v)} x'(\{v\} \cup \sigma) y'(\{v\} \cup \sigma) = \sum_{\tau = \{v, v_1, \dots, v_n\}} x'(\tau) y'(\tau).
\]
This matches the quotient inner product
\[
\langle [x], [y] \rangle_{C_n(K, K \setminus \{v\})} = \sum_{\tau = \{v, v_1, \dots, v_n\}} x'(\tau) y'(\tau).
\]
Thus, we obtain
\[
\langle [x], [y] \rangle_{C_n(K, K \setminus \{v\})} = \langle \phi([x]), \phi([y]) \rangle_{C_{n-1}(\text{Lk}_K(v))}.
\]
Hence, $\phi$ is an isometry.
\end{proof}

By Theorem~\ref{theorem:chain_isomorphism}, the local Laplacian on the relative chain group $C_{n}(K,\,K\setminus\{v\})$ can be computed via the combinatorial Laplacian on the link
complex $C_{\ast}(\Lk)$. More precisely, let
\[
\Delta_{n}^{K,v}\colon C_{n}(K,\,K\setminus\{v\}) \longrightarrow C_{n}(K,\,K\setminus\{v\})
\]
denote the combinatorial Laplacian defined with respect to the quotient inner product, and let
\[
\Delta_{n-1}^{\Lk}\colon C_{n-1}(\Lk) \longrightarrow C_{n-1}(\Lk)
\]
denote the standard combinatorial Laplacian on the chain complex of the link. Then we have the following result.

\begin{theorem}\label{theorem:laplacian_conjugacy}
Let $K$ be a simplicial complex and $v \in K$ a vertex. Then the following diagram
\[
  \xymatrix{
  C_n(K,\,K\setminus\{v\}) \ar[rr]^-{\Delta_{n}^{K,v}} \ar[d]_{\phi}^{\cong}
    && C_n(K,\,K\setminus\{v\}) \ar[d]^{\phi}_{\cong} \\
  C_{n-1}(\Lk) \ar[rr]^-{\Delta_{n-1}^{\Lk}}
    && C_{n-1}(\Lk)
  }
\]
commutes for $n\geq 1$. In particular, we have
\[
\Delta_{n}^{K,v} \;=\; \phi^{-1} \circ \Delta_{n-1}^{\Lk} \circ \phi,\quad n\geq 1.
\]
\end{theorem}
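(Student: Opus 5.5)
The plan is to derive the conjugacy formally from Theorem~\ref{theorem:chain_isomorphism}, which gives a chain isomorphism $\phi$ that is also an isometry. Set $\phi_n\colon C_n(K,K\setminus\{v\})\to C_{n-1}(\Lk_K(v))$ and write $\partial_{n-1}$ for the boundary of the link complex. The chain map property then reads
\[
\phi_{n-1}\circ \bar d_n = \partial_{n-1}\circ \phi_n .
\]
For $n=1$, so that the $n=1$ and bottom-degree cases are uniform, I take the link chain complex to be augmented. Then $C_{-1}(\Lk)=\mathbb{R}\cdot[\emptyset]$ corresponds to $C_0(K,K\setminus\{v\})=\mathbb{R}\cdot[v]$, and $\partial_0$ is the augmentation. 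This matches the reduced homology in the fundamental isomorphism $H_n(K,K\setminus\{v\})\cong\tilde H_{n-1}(\Lk_K(v))$.

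\textbf{Step 1 (unitarity).} Since each $\phi_n$ is a linear bijection between finite-dimensional Hilbert spaces preserving inner products, it is unitary, so $\phi_n^\ast=\phi_n^{-1}$.

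\textbf{Step 2 (adjoints intertwine).} Take adjoints of the chain map identity. This gives $\bar d_n^\ast\phi_{n-1}^\ast=\phi_n^\ast\partial_{n-1}^\ast$. Using Step 1, this becomes
\[
\phi_n\circ\bar d_n^\ast=\partial_{n-1}^\ast\circ\phi_{n-1}.
\]
Here $\bar d_n^\ast$ is the adjoint with respect to the quotient inner product, which is exactly the inner product under which $\phi$ is isometric. Only that inner product is relevant; the projection formula for $\bar d_n^\ast$ is never needed.

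\textbf{Step 3 (conjugate each summand).} Compute
\[
\phi_n\bar d_{n+1}\bar d_{n+1}^\ast=\partial_n\phi_{n+1}\bar d_{n+1}^\ast=\partial_n\partial_n^\ast\phi_n,
\]
and similarly
\[
\phi_n\bar d_n^\ast\bar d_n=\partial_{n-1}^\ast\phi_{n-1}\bar d_n=\partial_{n-1}^\ast\partial_{n-1}\phi_n.
\]
Adding the two gives $\phi_n\Delta_n^{K,v}=\Delta_{n-1}^{\Lk}\phi_n$, which is the commutativity of the diagram. Composing with $\phi_n^{-1}$ then yields $\Delta_n^{K,v}=\phi^{-1}\circ\Delta_{n-1}^{\Lk}\circ\phi$.

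\textbf{Main obstacle: boundary conventions.} The algebra above is routine; the real work is the degree bookkeeping at the ends of the complex. For $n=1$, $\Delta_0^{\Lk}$ must be read as $\partial_1\partial_1^\ast+\partial_0^\ast\partial_0$ with $\partial_0$ the augmentation, mirroring $\bar d_1\colon C_1(K,K\setminus\{v\})\to C_0(K,K\setminus\{v\})$, which sends $[v,w]\mapsto -[v]$ and is nonzero. I would also check that the sign $(-1)^n$ in $\phi$ makes it a genuine chain map, i.e.\ $\phi\bar d=\partial\phi$ with no extra sign. If that fails, $\phi$ anticommutes, meaning $\phi\bar d=-\partial\phi$, and the Laplacian identity still holds because each summand involves the sign twice. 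So in either case the conclusion is robust, and I would record this remark explicitly.
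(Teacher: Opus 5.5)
Your proposal is correct and takes essentially the paper's route: both use that $\phi$ is a unitary chain isomorphism to get $\phi\circ\bar d_n^{\ast}=\partial_{n-1}^{\ast}\circ\phi$, then conjugate the two summands of $\Delta_n^{K,v}$ separately. Your insistence on the augmented link complex at $n=1$ is necessary, not cosmetic: with the non-augmented convention $\Delta_0^{\Lk}=\partial_1\partial_1^{\ast}$ the identity fails, because $\bar d_1^{\ast}\bar d_1$ is the nonzero all-ones operator on $C_1(K,K\setminus\{v\})$, and the paper's proof leaves this point implicit.
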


\begin{proof}
Given that $\phi\colon C_n(K, K \setminus \{v\}) \to C_{n-1}(\Lk)$ is a chain isomorphism and an isometry, we show that $\phi \circ \Delta_n^{K,v} = \Delta_{n-1}^{\Lk} \circ \phi$.

The local Laplacian is defined as $\Delta_n^{K,v} = d_{n+1} d_{n+1}^{\ast} + d_n^{\ast} d_n$ on $C_n(K, K \setminus \{v\})$, and the link Laplacian is $\Delta_{n-1}^{\Lk} = d_n^{\mathrm{Lk}} (d_n^{\mathrm{Lk}})^{\ast} + (d_{n-1}^{\mathrm{Lk}})^{\ast} d_{n-1}^{\mathrm{Lk}}$ on $C_{n-1}(\Lk)$. Since $\phi$ is a chain map, $\phi \circ d_n = d_{n-1}^{\mathrm{Lk}} \circ \phi$ and $\phi \circ d_{n+1} = d_n^{\mathrm{Lk}} \circ \phi$.

To show commutativity, we verify for each term. For the first term, we have
\[
\phi \circ (d_{n+1} d_{n+1}^{\ast}) = (\phi \circ d_{n+1}) \circ d_{n+1}^{\ast} = (d_n^{\mathrm{Lk}} \circ \phi) \circ d_{n+1}^{\ast}.
\]
Since $\phi$ is an isometry, for $[a] \in C_{n+1}(K, K \setminus \{v\})$, $[b] \in C_n(K, K \setminus \{v\})$,
\[
\langle d_{n+1} [a], [b] \rangle_{C_n(K, K \setminus \{v\})} = \langle \phi(d_{n+1} [a]), \phi([b]) \rangle_{C_{n-1}(\Lk)} = \langle d_n^{\mathrm{Lk}} \phi([a]), \phi([b]) \rangle = \langle \phi([a]), (d_n^{\mathrm{Lk}})^{\ast} \phi([b]) \rangle.
\]
Also, $\langle d_{n+1} [a], [b] \rangle = \langle [a], d_{n+1}^{\ast} [b] \rangle = \langle \phi([a]), \phi(d_{n+1}^{\ast} [b]) \rangle$. Thus, $\phi \circ d_{n+1}^{\ast} = (d_n^{\mathrm{Lk}})^{\ast} \circ \phi$. Hence,
\[
\phi \circ (d_{n+1} d_{n+1}^{\ast}) = d_n^{\mathrm{Lk}} \circ (d_n^{\mathrm{Lk}})^{\ast} \circ \phi.
\]
For the second term, we obtain
\[
\phi \circ (d_n^{\ast} d_n) = (\phi \circ d_n^{\ast}) \circ d_n = ((d_{n-1}^{\mathrm{Lk}})^{\ast} \circ \phi) \circ d_n = (d_{n-1}^{\mathrm{Lk}})^{\ast} \circ (\phi \circ d_n) = (d_{n-1}^{\mathrm{Lk}})^{\ast} d_{n-1}^{\mathrm{Lk}} \circ \phi,
\]
since $\phi \circ d_n^{\ast} = (d_{n-1}^{\mathrm{Lk}})^{\ast} \circ \phi$ by a similar adjoint argument. Combining both,
\[
\phi \circ \Delta_n^{K,v} = \phi \circ (d_{n+1} d_{n+1}^{\ast} + d_n^{\ast} d_n) = d_n^{\mathrm{Lk}} (d_n^{\mathrm{Lk}})^{\ast} \circ \phi + (d_{n-1}^{\mathrm{Lk}})^{\ast} d_{n-1}^{\mathrm{Lk}} \circ \phi = \Delta_{n-1}^{\Lk} \circ \phi.
\]
Thus, the diagram commutes, and $\Delta_n^{K,v} = \phi^{-1} \circ \Delta_{n-1}^{\Lk} \circ \phi$.
\end{proof}

Theorem~\ref{theorem:laplacian_conjugacy} implies that the matrix representation of the Laplacian $\Delta_{n}^{K,v}$ is conjugate to the matrix representation of the Laplacian $\Delta_{n-1}^{\Lk}$ via the change of basis induced by the chain isomorphism $\phi$, and consequently the operators $\Delta_{n}^{K,v}$ and $\Delta_{n-1}^{\Lk}$ have the same spectrum. In this sense, we may regard $\Delta_{n-1}^{\Lk}$ as the local Laplacian, differing from $\Delta_{n}^{K,v}$ by a dimension shift of $-1$.

\begin{proposition}\label{proposition:zero_Laplacian}
Let $K$ be a simplicial complex and $v \in K$ a vertex. Then we have
\begin{equation*}
  \Delta_{0}^{K,v}=\deg v,
\end{equation*}
where $\deg v$ denotes the number of edges connecting $v$.
\end{proposition}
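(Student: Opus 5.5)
Since Theorem~\ref{theorem:laplacian_conjugacy} only covers $n\geq 1$, I will compute $\Delta_0^{K,v}$ directly from the definition. The plan has three steps: identify the relative chain spaces in degrees $0$ and $1$ with explicit orthonormal bases, write the relative boundary $\bar d_1$ in those bases, and then form $\bar d_1\bar d_1^{\ast}$.

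\textbf{Step 1: the relative chain spaces.} The deleted subcomplex $K\setminus\{v\}$ consists exactly of the simplices not containing $v$. Hence $C_0(K\setminus\{v\})^\perp$ is spanned by the single vertex $v$, so $C_0(K,K\setminus\{v\})\cong\mathbb{R}\cdot[v]$ is one-dimensional with $\|[v]\|=1$ under the quotient inner product. Likewise, $C_1(K\setminus\{v\})^\perp$ is spanned by the edges $[v,w_1],\dots,[v,w_k]$ with $k=\deg v$. Through the isometry $\theta_1$, their classes form an orthonormal basis of $C_1(K,K\setminus\{v\})$.

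\textbf{Step 2: the relative boundary.} We have
\[
d_1[v,w_i]=w_i-v,
\]
and $w_i\in K\setminus\{v\}$. Applying $\pi_0$ gives $\bar d_1[v,w_i]=-[v]$ for every $i$. Thus, in the orthonormal bases of Step 1, $\bar d_1$ is the $1\times k$ matrix $(-1,\dots,-1)$.

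\textbf{Step 3: the Laplacian.} Because the bases are orthonormal for the quotient inner products, the adjoint $\bar d_1^{\ast}$ is the transpose of this matrix. Therefore
\[
\Delta_0^{K,v}=\bar d_1\bar d_1^{\ast}=\sum_{i=1}^k(-1)^2=k=\deg v,
\]
meaning multiplication by $\deg v$ on the one-dimensional space $C_0(K,K\setminus\{v\})$. Note also that $C_{-1}=0$ in the unreduced setting, so no $\bar d_0^{\ast}\bar d_0$ term appears, consistent with the definition.

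The main point requiring care is Step 3: we must justify that the adjoint taken with respect to the quotient inner product really is the matrix transpose. This follows from the orthonormality established in Step 1 via the isometries $\theta_0,\theta_1$. Equivalently, the projection formula $\bar d_1^{\ast}\beta=\pi_1(P_1(d_1^{\ast}v))$ gives $\bar d_1^{\ast}[v]=-\sum_i[v,w_i]$, because $d_1^{\ast}v$ has components $-1$ on the edges at $v$ and $P_1$ leaves those components unchanged. We should also check the edge case $\deg v=0$: then $C_1(K,K\setminus\{v\})=0$ and $\Delta_0^{K,v}=0$, which agrees with the formula.
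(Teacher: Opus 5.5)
Your proposal is correct and follows essentially the same route as the paper. Both arguments identify $C_0(K,K\setminus\{v\})=\mathbb{R}[v]$, observe that $\bar d_1[v,w]=-[v]$ for each edge at $v$, obtain $\bar d_1^{\ast}[v]=-\sum_w [v,w]$, and conclude $\bar d_1\bar d_1^{\ast}=(\deg v)\,\mathrm{id}$. The only difference is that the paper gets the adjoint coefficientwise from the adjoint relation, whereas you use orthonormal bases and the transpose; your justification of orthonormality via $\theta_0,\theta_1$ and your check of the case $\deg v=0$ are small but welcome additions.
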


\begin{proof}
The relative chain group $C_0(K, K \setminus \{v\}) = C_0(K) / C_0(K \setminus \{v\})$ is generated by the class $[v]$, since $C_0(K)$ is generated by all vertices of $K$, and $C_0(K \setminus \{v\})$ by vertices other than $v$. Thus, $C_0(K, K \setminus \{v\})$ is 1-dimensional, spanned by $[v]$.

Next, consider $d_1\colon C_1(K, K \setminus \{v\}) \to C_0(K, K \setminus \{v\})$. The chain group $C_1(K, K \setminus \{v\}) = C_1(K) / C_1(K \setminus \{v\})$ is generated by classes of edges $[v, w]$ where $\{v, w\} \in K$. The boundary map is
\[
d_1([v, w]) = [w] - [v]= 0 - [v] = -[v].
\]
Here, we use that fact $[w] = 0$ in $C_0(K, K \setminus \{v\})$. Thus, $d_1$ maps each basis element $[v, w]$ to a multiple of $[v]$.

Suppose $d_1^{\ast} [v] = \sum_{w\colon \{v, w\} \in K} c_w [v, w]$. Then we have
\[
c_{w'} = \langle [v, w'], d_1^{\ast} [v] \rangle_{C_1} = \langle d_1 [v, w'], [v] \rangle_{C_0} = -1.
\]
It follows that $d_1^{\ast} [v] = \sum_{w\colon \{v, w\} \in K} -[v, w]$. Thus, one has
\[
d_1 d_1^{\ast} [v] = d_1 \left( \sum_{w\colon \{v, w\} \in K} (-1) [v, w] \right) = \sum_{w} (-1) d_1 [v, w] = \sum_{w} (-1) (-[v]) = (\deg v) [v],
\]
where $\deg v$ is the number of edges incident to $v$. Hence, $d_1 d_1^{\ast} = (\deg v) \cdot \text{id}$.
\end{proof}
It is worth noting that even if the simplicial complex $K$ is very large, in the sense that its vertex set and simplex set are extensive, the local subcomplex $\Lk_K(v)$ may still be comparatively small, which can lead to significant improvements in the efficiency of computing local information.

\subsubsection{Examples}

\begin{example}\label{example:local_lap}
Consider a simplicial complex $K$ with vertices $\{v, a, b, c\}$, edges
\begin{equation*}
  \{v, a\}, \{v, b\}, \{v, c\}, \{a, b\}, \{b, c\}
\end{equation*}
and triangles $\{v, a, b\}, \{v, b, c\}$. We compute the local Laplacians $\Delta_1^{K,v}$ and $\Delta_2^{K,v}$ for vertex $v$ using the Laplacians of the link $\mathrm{Lk}_K(v)$.

The link $\mathrm{Lk}_K(v)$ consists of vertices $\{a, b, c\}$ and edges $\{a, b\}, \{b, c\}$. The chain group $C_0(\mathrm{Lk}_K(v))$ is generated by $\{[a], [b], [c]\}$, $C_1(\mathrm{Lk}_K(v))$ by $\{[a, b], [b, c]\}$, and higher chains are zero.

\begin{figure}[htbp]
    \centering
    \begin{tikzpicture}[scale=1.6, thick,
        node/.style={circle, fill=black, inner sep=1.5pt},
        vnode/.style={circle, fill=red, inner sep=2pt, label=left:$v$}]

        \draw[dashed, gray!60] (-0.5, -1.2) rectangle (2.5, 1.2);
        \begin{scope}
            \fill[blue!15] (0,0) -- (1, 0.8) -- (1, -0.8) -- cycle; 
            \fill[green!15] (0,0) -- (1, -0.8) -- (2, 0) -- cycle; 
            \node[vnode] (v) at (0,0) {};
            \node[node, label=above:$a$] (a) at (1, 0.8) {};
            \node[node, label=below:$b$] (b) at (1, -0.8) {};
            \node[node, label=right:$c$] (c) at (2, 0) {};
            \draw (v) -- (a); \draw (v) -- (b); \draw (v) -- (c);
            \draw (a) -- (b); \draw (b) -- (c);
            \node at (2, -1.0) {$K$};
        \end{scope}

        \draw[->, -Stealth, thick] (2.7, 0) -- (3.5, 0) node[midway, above] {$\mathrm{Lk}_K(v)$};

        \begin{scope}[xshift=4.2cm]
            \draw[dashed, gray!60] (-0.5, -1.2) rectangle (1.5, 1.2);
            \node[node, label=above:$a$] (la) at (0, 0.8) {};
            \node[node, label=below:$b$] (lb) at (0, -0.8) {};
            \node[node, label=right:$c$] (lc) at (1, 0) {};
            \draw[blue, thick] (la) -- (lb);
            \draw[green!60!black, thick] (lb) -- (lc);
            \node at (1, -1.0) {$\mathrm{Lk}_K(v)$};
        \end{scope}
    \end{tikzpicture}
    \caption{Construction of the link complex from a simplicial complex $K$.}
\end{figure}
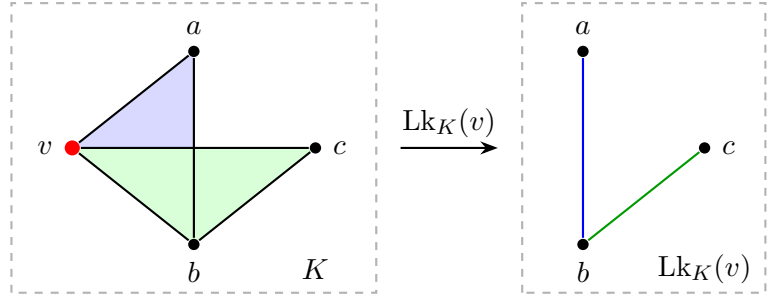

For $\Delta_1^{K,v}$ on $C_1(K, K \setminus \{v\})$ generated by $\{[v, a], [v, b], [v, c]\}$: the Laplacian is $\Delta_0^{\mathrm{Lk}_K(v)} = d_1^{\mathrm{Lk}} (d_1^{\mathrm{Lk}})^{\ast} + (d_0^{\mathrm{Lk}})^{\ast} d_0^{\mathrm{Lk}}$. Since $d_0^{\mathrm{Lk}} = 0$, we have $\Delta_0^{\mathrm{Lk}_K(v)} = d_1^{\mathrm{Lk}} (d_1^{\mathrm{Lk}})^{\ast}$.

The boundary operator is
\[
d_1^{\mathrm{Lk}}([a, b]) = [b] - [a], \quad d_1^{\mathrm{Lk}}([b, c]) = [c] - [b].
\]
The corresponding presentation matrix with respect to the basis $\{[a,b], [b,c]\}$ in the domain and the basis $\{[a], [b], [c]\}$ in the codomain is
\[
B_1^{\mathrm{Lk}} = \begin{bmatrix}
-1 & 0 \\
1 & -1 \\
0 & 1
\end{bmatrix}, \quad (B_1^{\mathrm{Lk}})^{\ast} = \begin{bmatrix}
-1 & 1 & 0 \\
0 & -1 & 1
\end{bmatrix}.
\]
Thus, we obtain the representation matrix of Laplacian
\[
L_0^{\mathrm{Lk}_K(v)} = \begin{bmatrix}
-1 & 0 \\
1 & -1 \\
0 & 1
\end{bmatrix} \begin{bmatrix}
-1 & 1 & 0 \\
0 & -1 & 1
\end{bmatrix} = \begin{bmatrix}
1 & -1 & 0 \\
-1 & 2 & -1 \\
0 & -1 & 1
\end{bmatrix}.
\]
By Theorem \ref{theorem:laplacian_conjugacy}, the representation matrix of $\Delta_1^{K,v}$ in the corresponding basis is
\[
L_1^{K,v} = \begin{bmatrix}
1 & -1 & 0 \\
-1 & 2 & -1 \\
0 & -1 & 1
\end{bmatrix}.
\]

For $\Delta_2^{K,v}$ on $C_2(K, K \setminus \{v\})$ generated by $\{[v, a, b], [v, b, c]\}$: the Laplacian is $\Delta_1^{\mathrm{Lk}_K(v)} = d_2^{\mathrm{Lk}} (d_2^{\mathrm{Lk}})^{\ast} + (d_1^{\mathrm{Lk}})^{\ast} d_1^{\mathrm{Lk}}$. Since $d_2^{\mathrm{Lk}} = 0$, we have $\Delta_1^{\mathrm{Lk}_K(v)} = (d_1^{\mathrm{Lk}})^{\ast} d_1^{\mathrm{Lk}}$. Thus, we have
\[
\Delta_1^{\mathrm{Lk}_K(v)} = (B_1^{\mathrm{Lk}})^{\ast} B_1^{\mathrm{Lk}} = \begin{bmatrix}
-1 & 1 & 0 \\
0 & -1 & 1
\end{bmatrix} \begin{bmatrix}
-1 & 0 \\
1 & -1 \\
0 & 1
\end{bmatrix} = \begin{bmatrix}
2 & -1 \\
-1 & 2
\end{bmatrix}.
\]
The representation matrix of $\Delta_2^{K,v}$ with respect to the corresponding basis is
\[
L_2^{K,v} = \begin{bmatrix}
2 & -1 \\
-1 & 2
\end{bmatrix}.
\]
\end{example}

\begin{example}
For the same simplicial complex $K$ as in Example~\ref{example:local_lap}, we compute the local Laplacians at vertex $a$.

The link $\mathrm{Lk}_K(a)$ consists of vertices $\{v,b\}$ and the single edge $\{v,b\}$. Thus
\[
C_0(\mathrm{Lk}_K(a)) = \langle [v], [b] \rangle,
\qquad
C_1(\mathrm{Lk}_K(a)) = \langle [v,b] \rangle,
\]
with boundary operator
\[
d_1^{\mathrm{Lk}}([v,b]) = [b] - [v].
\]

For $\Delta_1^{K,a}$ on $C_1(K,K\setminus \{a\})$ generated by $\{[a,v],[a,b]\}$: since $d_0^{\mathrm{Lk}}=0$, we have
\[
\Delta_0^{\mathrm{Lk}_K(a)} = d_1^{\mathrm{Lk}} (d_1^{\mathrm{Lk}})^{\ast}.
\]
With respect to the basis $\{[v,b]\}$ in the domain and $\{[v],[b]\}$ in the codomain,
\[
B_1^{\mathrm{Lk}} = \begin{bmatrix}-1 \\ 1\end{bmatrix},
\qquad
(B_1^{\mathrm{Lk}})^{\ast} = \begin{bmatrix}-1 & 1\end{bmatrix}.
\]
Thus
\[
L_0^{\mathrm{Lk}_K(a)} = B_1^{\mathrm{Lk}} (B_1^{\mathrm{Lk}})^{\ast}
= \begin{bmatrix}-1 \\ 1\end{bmatrix}\begin{bmatrix}-1 & 1\end{bmatrix}
= \begin{bmatrix} 1 & -1 \\ -1 & 1 \end{bmatrix}.
\]
By Theorem~\ref{theorem:laplacian_conjugacy}, the representation matrix of $\Delta_1^{K,a}$ is
\[
L_1^{K,a} = \begin{bmatrix} 1 & -1 \\ -1 & 1 \end{bmatrix},
\]
with respect to the basis $\{[a,v],[a,b]\}$.

For $\Delta_2^{K,a}$ on $C_2(K,K\setminus \{a\})$ generated by $\{[v,a,b]\}$: since $d_2^{\mathrm{Lk}}=0$, we have
\[
\Delta_1^{\mathrm{Lk}_K(a)} = (d_1^{\mathrm{Lk}})^{\ast} d_1^{\mathrm{Lk}}
= (B_1^{\mathrm{Lk}})^{\ast} B_1^{\mathrm{Lk}}.
\]
Hence
\[
L_2^{K,a} = (B_1^{\mathrm{Lk}})^{\ast} B_1^{\mathrm{Lk}}
= \begin{bmatrix}-1 & 1\end{bmatrix}\begin{bmatrix}-1 \\ 1\end{bmatrix}
= \begin{bmatrix} 2 \end{bmatrix}.
\]

In summary, the local Laplacians at vertex $a$ are
\[
L_1^{K,a} = \begin{bmatrix}1 & -1 \\ -1 & 1\end{bmatrix},
\qquad
L_2^{K,a} = \begin{bmatrix}2\end{bmatrix}.
\]

This example shows that the local Laplacians at different vertices may differ.
\end{example}

\begin{example}
Consider an $n$-simplex $K$ with the vertex set $\{0, 1, \dots, n\}$. The link of the vertex $0$, denoted as $\mathrm{Lk}_K(0)$, is an $(n-1)$-simplex spanned by the vertices $\{1, 2, \dots, n\}$. Consequently, the 1-skeleton of $\mathrm{Lk}_K(0)$ is the complete graph $K_n$ on $n$ vertices.
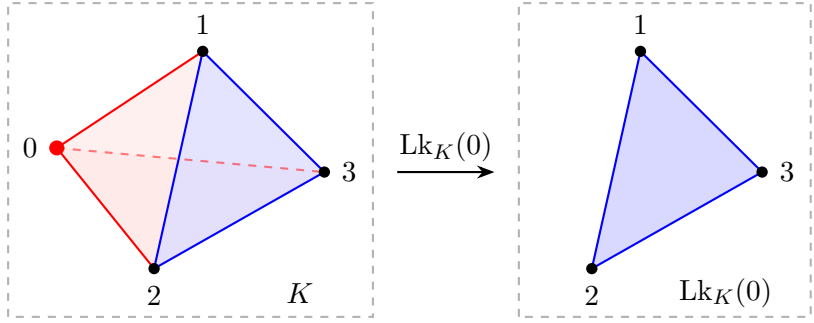
\begin{figure}[htbp]
    \centering
    \begin{tikzpicture}[scale=1.6, thick,
        node/.style={circle, fill=black, inner sep=1.5pt},
        vzero/.style={circle, fill=red, inner sep=2pt, label=left:$0$}]

        \begin{scope}
            \draw[dashed, gray!60] (-0.4, -1.4) rectangle (2.6, 1.2);
            
            \coordinate (0) at (0,0);
            \coordinate (1) at (1.2, 0.8);
            \coordinate (2) at (0.8, -1);
            \coordinate (3) at (2.2, -0.2);

            \fill[red!10, opacity=0.6] (0) -- (1) -- (2) -- cycle;
            \fill[red!10, opacity=0.6] (0) -- (2) -- (3) -- cycle;
            \fill[blue!15, opacity=0.7] (1) -- (2) -- (3) -- cycle; 

            \draw[red, dashed, opacity=0.5] (0) -- (3); 
            \draw[red] (0) -- (1);
            \draw[red] (0) -- (2);

            \draw[blue, thick] (1) -- (2);
            \draw[blue, thick] (2) -- (3);
            \draw[blue, thick] (1) -- (3);

            \node[vzero] at (0) {};
            \node[node, label=above:$1$] at (1) {};
            \node[node, label=below:$2$] at (2) {};
            \node[node, label=right:$3$] at (3) {};

            \node at (2, -1.2) {$K$};
        \end{scope}

        \draw[->, -Stealth, thick] (2.8, -0.2) -- (3.6, -0.2) node[midway, above] {$\mathrm{Lk}_K(0)$};

        \begin{scope}[xshift=4.6cm]
            \draw[dashed, gray!60] (-0.8, -1.4) rectangle (1.6, 1.2);
            
            \coordinate (L1) at (0.2, 0.8);
            \coordinate (L2) at (-0.2, -1);
            \coordinate (L3) at (1.2, -0.2);

            \fill[blue!15] (L1) -- (L2) -- (L3) -- cycle;

            \draw[blue, thick] (L1) -- (L2);
            \draw[blue, thick] (L2) -- (L3);
            \draw[blue, thick] (L1) -- (L3);

            \node[node, label=above:$1$] at (L1) {};
            \node[node, label=below:$2$] at (L2) {};
            \node[node, label=right:$3$] at (L3) {};

            \node at (0.9, -1.2) {$\mathrm{Lk}_K(0)$};
        \end{scope}

    \end{tikzpicture}
    \caption{The local structure of a $3$-simplex at vertex $0$. The link $\mathrm{Lk}_K(0)$ is a $2$-simplex (triangle), whose combinatorial Laplacian $\Delta_0^{\mathrm{Lk}_K(0)}$ defines the local Laplacian $\Delta_1^{K,0}$ of the original complex at vertex $0$.}
    \label{fig:n_simplex_example}
\end{figure}
The combinatorial Laplacian $\Delta_0^{\mathrm{Lk}_K(0)}$ acting on the $0$-th chain group $C_0(\mathrm{Lk}_K(0))$ corresponds to the standard graph Laplacian of $K_n$. Specifically, this operator is represented by an $n \times n$ matrix with diagonal entries $n-1$ and off-diagonal entries $-1$, which can be written as
$$\Delta_0^{\mathrm{Lk}_K(0)} = nI_n - J_n,$$
where $I_n$ denotes the $n \times n$ identity matrix and $J_n$ is the $n \times n$ all-ones matrix. 

For instance, in the case of $n=3$ (where the link is a $2$-simplex with vertices $\{1, 2, 3\}$), the matrix takes the form
\begin{equation*}
  \Delta_0^{\mathrm{Lk}_K(0)} = \begin{bmatrix*}[r]
 2 & -1 & -1 \\
-1 &  2 & -1 \\
-1 & -1 &  2
\end{bmatrix*}
\end{equation*}
By the chain isomorphism $\phi \colon C_1(K, K \setminus \{0\}) \to C_0(\mathrm{Lk}_K(0))$, we can define the local Laplacian as $\Delta_1^{K,0} = \phi^{-1} \circ \Delta_0^{\mathrm{Lk}_K(0)} \circ \phi$. Under the basis induced by $\phi$, $\Delta_1^{K,0}$ inherits the same matrix representation. Notably, due to the inherent vertex symmetry of the simplex, the local Laplacian is the same for every vertex.
\end{example}

\section{Topological persistence and local Laplacian}\label{section:generalized_Laplacian}

While persistent homology characterizes the evolution of global homological features across a filtration, local homology identifies the topological invariants restricted to the neighborhood of a specific point relative to its boundary. Persistent local homology integrates these paradigms by tracking the persistence of local homological features across multiple scales, thereby encoding scale-dependent geometric and topological signatures. 

Parallel to this development, we introduce the \textit{persistent local Laplacian}, which shifts the focus from purely local topological persistence to local spectral persistence. It provides a multiscale characterization of localized harmonic structures and spectral features.

\subsection{Persistent local homology}

\subsubsection{Filtration and persistent homology}

Let $\mathcal{K}$ be a \textit{persistent simplicial complex} (or a filtration) indexed by the integers $(\mathbb{Z}, \leq)$. Formally, this is a functor $\mathcal{K}\colon (\mathbb{Z}, \leq) \to \mathbf{Simp}$ from the discrete category of integers to the category of simplicial complexes, which can be visualized via the following long sequence
\begin{equation*}
\xymatrix{
\cdots \ar[r] \ar[r] & \mathcal{K}_{i-1} \ar[r]^{f^{i-1,i}} & \mathcal{K}_{i} \ar[r]^{f^{i,i+1}} & \mathcal{K}_{i+1} \ar[r]  & \cdots.
}
\end{equation*}
For any pair $i \leq j$, the simplicial map $f^{i,j}\colon \mathcal{K}_i \rightarrow \mathcal{K}_j$ induces a map of homology groups
\begin{equation*}
f^{i,j}_{n}\colon H_{n}(\mathcal{K}_{i}) \to H_{n}(\mathcal{K}_{j}), \quad \text{for } n \geq 0.
\end{equation*}

The persistent homological information is captured by the images of these induced maps, which represent the topological features that survive from scale $i$ to scale $j$.

\begin{definition}
For $i \leq j$, the \textit{$(i,j)$-persistent $n$-th homology group} of the filtration $\mathcal{K}$, denoted by $H_n^{i,j}(\mathcal{K})$, is defined as
\[
H_n^{i,j}(\mathcal{K}) := \im(f^{i,j}_{n}\colon H_n(\mathcal{K}_i) \to H_n(\mathcal{K}_j)).
\]
\end{definition}
The rank of this group, $\beta_n^{i,j} = \dim H_n^{i,j}(\mathcal{K})$, constitutes the \textit{$(i,j)$-persistent Betti number}, which counts the number of $n$-dimensional holes that are born at or before index $i$ and remain alive at index $j$.

Persistent homology allows for a multi-scale analysis of topological stability. Features with a large persistence $j-i$ are typically associated with the underlying geometry of the space, while those with short lifespans are often interpreted as noise inherent to the discretization or the filtration process.

\subsubsection{Persistent relative homology}

The concept of persistent local homology was pioneered by Fasy et al. \cite{fasy2016exploring}, with subsequent research extending these localized invariants to graph-structured data and geometric deep learning architectures \cite{cesa2023algebraic, wang2024persistent}. In this section, we provide a formal construction of persistent local homology by treating persistence complexes as functors.

Let $\mathcal{K}\colon (\mathbb{Z}, \leq) \to \mathbf{Simp}$ be a \textit{persistence simplicial complex}. A \textit{persistence subcomplex} $\mathcal{L}$ of $\mathcal{K}$ is defined as a subfunctor $\mathcal{L} \subseteq \mathcal{K}$. Specifically, for each $i \in \mathbb{Z}$, $\mathcal{L}_i$ is a subcomplex of $\mathcal{K}_i$, and for every $i \leq j$, the following diagram of simplicial maps commutes:
\begin{equation*}
\xymatrix{
\mathcal{L}_i \ar[rr]^{f^{i,j}_{\mathcal{L}}} \ar@{^{(}->}[d]_{\iota_i} && \mathcal{L}_j \ar@{^{(}->}[d]^{\iota_j} \\
\mathcal{K}_i \ar[rr]_{f^{i,j}_{\mathcal{K}}} && \mathcal{K}_j.
}
\end{equation*}
Here, $\iota_i$ and $\iota_j$ are the canonical inclusion maps at their respective scales, while $f^{i,j}_{\mathcal{L}}$ and $f^{i,j}_{\mathcal{K}}$ are the transition maps of the persistence simplicial complexes $\mathcal{L}$ and $\mathcal{K}$. 

Consequently, the pair $(\mathcal{K}, \mathcal{L})$ defines a \textit{persistence relative complex}. For any $i \leq j$, the transition maps induce a morphism of pairs $f^{i,j}\colon (\mathcal{K}_i, \mathcal{L}_i) \to (\mathcal{K}_j, \mathcal{L}_j)$, which in turn yields a sequence of homomorphisms between the relative homology groups
\begin{equation*}
\bar{f}^{i,j}_n\colon H_n(\mathcal{K}_i, \mathcal{L}_i) \to H_n(\mathcal{K}_j, \mathcal{L}_j).
\end{equation*}
The $(i,j)$-persistent local homology is then characterized by the image of this induced map.

\begin{definition}
The \textit{$(i,j)$-persistent $n$-th relative homology group} of the pair $(\mathcal{K}, \mathcal{L})$, denoted by $H_n^{i,j}(\mathcal{K}, \mathcal{L})$, is defined as the image of this induced homomorphism
\[
H_n^{i,j}(\mathcal{K}, \mathcal{L}) := \im \left( \bar{f}^{i,j}_{n}\colon H_n(\mathcal{K}_i, \mathcal{L}_i) \to H_n(\mathcal{K}_j, \mathcal{L}_j) \right).
\]
\end{definition}
The rank of this group, $\beta_n^{i,j}(\mathcal{K}, \mathcal{L}) = \dim H_n^{i,j}(\mathcal{K}, \mathcal{L})$, is referred to as the \textit{$(i,j)$-persistent relative Betti number}. The collection of relative homology groups $\{H_n(\mathcal{K}_i, \mathcal{L}_i)\}_{i \in \mathbb{Z}}$ together with the induced maps $\{\bar{f}^{i,j}_n\}_{i \leq j}$ constitutes a persistence module. This module structure ensures that the relative topological features can be decomposed into a barcode or a persistence diagram, providing a multi-scale signature of the local environment around a site of interest.

\subsubsection{Persistent local homology}

The construction of persistent local homology relies on the dynamic tracking of a vertex and its complement across the filtration. To formalize this, we introduce the concept of a persistence vertex complex.

\begin{definition}
A \textit{persistence vertex complex} $\mathbf{v}\colon (\mathbb{Z}, \leq) \to \mathbf{Simp}$ is a persistence simplicial complex such that for each $i \in \mathbb{Z}$, the value $\mathbf{v}_i$ is either a single vertex $\{v_i\}$ or the empty set $\emptyset$. Specifically, there exists a birth index $b \in \mathbb{Z}$ such that
\begin{equation*}
\mathbf{v}_i = \begin{cases} 
\emptyset & \text{if } i < b \\
\{v_i\} & \text{if } i \geq b
\end{cases}
\end{equation*}
For all $j \geq i \geq b$, the transition morphism $f^{i,j}_{\mathbf{v}}\colon \mathbf{v}_i \to \mathbf{v}_j$ is the unique bijection between the single-vertex sets $\{v_i\}$ and $\{v_j\}$, i.e., $f^{i,j}_{\mathbf{v}}(v_i) = v_j$.
\end{definition}

Suppose $\mathbf{v}\colon (\mathbb{Z}, \leq) \to \mathbf{Simp}$ is a persistence vertex subcomplex of $\mathcal{K}\colon (\mathbb{Z}, \leq) \to \mathbf{Simp}$. We define the persistence simplicial complex $\mathcal{K}^{\mathbf{v}}\colon (\mathbb{Z}, \leq) \to \mathbf{Simp}$, where each
\begin{equation*}
  \mathcal{K}^{\mathbf{v}}_i = \mathcal{K}_i \setminus \st_{\mathcal{K}_i}(\mathbf{v}_i) = \{\sigma\in \mathcal{K}_i\mid v_i\notin \sigma\}
\end{equation*}
is the subcomplex of $\mathcal{K}_i$ consisting of all simplices that do not contain the vertex in $\mathbf{v}_i$. In particular, if $\mathbf{v}_{i}=\emptyset$, we set $\mathcal{K}^{\mathbf{v}}_i = \mathcal{K}_i$.

\begin{proposition}
Let $\mathcal{K}$ be a persistence simplicial complex and $\mathbf{v}$ a persistence vertex subcomplex of $\mathcal{K}$. Then the persistence simplicial complex $\mathcal{K}^{\mathbf{v}}\colon (\mathbb{Z}, \leq) \to \mathbf{Simp}$ is a persistence subcomplex of $\mathcal{K}$.
\end{proposition}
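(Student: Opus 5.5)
The plan is to check the two conditions in the definition of a persistence subcomplex. First, each $\mathcal{K}^{\mathbf{v}}_i$ must be a subcomplex of $\mathcal{K}_i$. Second, for every $i\le j$ the transition map $f^{i,j}_{\mathcal{K}}$ must restrict to a simplicial map $\mathcal{K}^{\mathbf{v}}_i\to\mathcal{K}^{\mathbf{v}}_j$. Once the second condition holds, the square with the inclusions $\iota_i,\iota_j$ commutes automatically, since the restricted map is just $f^{i,j}_{\mathcal{K}}$ on a smaller domain. Functoriality, $f^{j,k}\circ f^{i,j}=f^{i,k}$ and $f^{i,i}=\mathrm{id}$, is inherited from $\mathcal{K}$ for the same reason.

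\textbf{Step 1 (subcomplex).} If $\mathbf{v}_i=\emptyset$, then $\mathcal{K}^{\mathbf{v}}_i=\mathcal{K}_i$ and there is nothing to check. Otherwise take $\sigma\in\mathcal{K}_i$ with $v_i\notin\sigma$ and a face $\tau\subseteq\sigma$. Then $v_i\notin\tau$ and $\tau\in\mathcal{K}_i$, so $\mathcal{K}^{\mathbf{v}}_i$ is closed under taking faces.

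\textbf{Step 2 (transition maps).} Split into three cases.

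\emph{Case $j<b$.} Both $\mathcal{K}^{\mathbf{v}}_i$ and $\mathcal{K}^{\mathbf{v}}_j$ are the whole complexes, so the restriction is trivially valid.

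\emph{Case $b\le i\le j$.} Take $\sigma\in\mathcal{K}^{\mathbf{v}}_i$. Since $f^{i,j}(\sigma)$ is the simplex spanned by the images of the vertices of $\sigma$, we must show that no vertex $w\ne v_i$ is sent to $v_j$.

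\emph{Case $i<b\le j$.} Here $\mathcal{K}^{\mathbf{v}}_i=\mathcal{K}_i$, and we must show that $v_j\notin f^{i,j}(\mathcal{K}_i)$.

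\textbf{Main obstacle.} The remaining two cases do not follow formally from the axioms for an arbitrary simplicial map. A non-injective $f^{i,j}$ could collapse some vertex $w$ onto $v_j$. I will therefore invoke the filtration setting in which the statement is used: the $f^{i,j}$ are the inclusions $\mathcal{K}_i\subseteq\mathcal{K}_j$, and $\mathbf{v}$ is the single vertex $v=v_b$ born at index $b$.

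Under this reading, $f^{i,j}(w)=w$ for every vertex. So for $b\le i\le j$, a simplex avoiding $v$ in $\mathcal{K}_i$ still avoids $v$ in $\mathcal{K}_j$. For $i<b\le j$, the vertex $v$ is not yet present in $\mathcal{K}_i$, so no simplex of $\mathcal{K}_i$ contains it, and $\mathcal{K}_i\subseteq\mathcal{K}^{\mathbf{v}}_j$.

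More generally, the argument goes through under the weaker assumption $(f^{i,j})^{-1}(v_j)\subseteq\{v_i\}$ on vertices, with the right-hand side read as empty when $i<b$. I will state this assumption explicitly in the proof and note that it holds for filtrations by inclusions.
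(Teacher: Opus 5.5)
Your argument has the same skeleton as the paper's: faces of simplices avoiding $v_i$ avoid $v_i$, the transition maps restrict, and commutativity and functoriality are inherited. It is correct under the hypothesis you state. Where you differ is the key step, and there you are more careful than the paper.

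The paper asserts that, because $f^{i,j}$ restricts to a bijection $\{v_i\}\to\{v_j\}$, a vertex $u$ satisfies $f^{i,j}(u)=v_j$ only if $u=v_i$. It also calls the case $i<b\le j$ trivial. Neither claim follows from the definition of a persistence vertex subcomplex, so the obstacle you flagged is genuine. Two examples show this (pad both filtrations by $\emptyset$ below and constantly above):
\begin{enumerate}
\item Let $\mathcal{K}_0$ have vertices $v,w$, let $\mathcal{K}_1=\{\{v\}\}$, let $f^{0,1}$ send $w\mapsto v$, and set $\mathbf{v}_0=\mathbf{v}_1=\{v\}$. Then $\{w\}\in\mathcal{K}^{\mathbf{v}}_0$ is sent to $\{v\}\notin\mathcal{K}^{\mathbf{v}}_1$.
\item Let $\mathcal{K}_0=\{\{w\}\}$, $\mathcal{K}_1=\{\{v\}\}$, $w\mapsto v$, and $b=1$. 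This breaks the case $i<b\le j$.
\end{enumerate}
So the proposition as stated, for arbitrary simplicial transition maps, is false.

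Your condition $(f^{i,j})^{-1}(v_j)\subseteq\{v_i\}$, read as $(f^{i,j})^{-1}(v_j)=\emptyset$ when $i<b$, is exactly what the paper's proof uses implicitly. Testing on $0$-simplices shows it is also necessary, so it characterizes precisely when $\mathcal{K}^{\mathbf{v}}$ is a subfunctor; you could note this in your write-up. The paper only applies the result to filtrations in $\mathbf{Simp}^{\hookrightarrow}$ with $b$ the birth index of $v$, where your condition holds automatically. Your version is therefore the right statement to prove.
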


\begin{proof}
To establish that $\mathcal{K}^{\mathbf{v}}$ is a persistence subcomplex, we first verify that each $\mathcal{K}^{\mathbf{v}}_i$ is a well-defined subcomplex of $\mathcal{K}_i$. For any simplex $\sigma \in \mathcal{K}^{\mathbf{v}}_i$, it follows from the definition that $v_i \notin \sigma$. If $\tau$ is a face of $\sigma$, then $\tau$ is necessarily an element of $\mathcal{K}_i$. Since $\tau \subseteq \sigma$ and $v_i \notin \sigma$, the vertex $v_i$ cannot be contained in $\tau$. Thus, $\tau \in \mathcal{K}^{\mathbf{v}}_i$, confirming that $\mathcal{K}^{\mathbf{v}}_i$ is closed under the face map.

Next, we demonstrate that the transition morphisms of $\mathcal{K}$ restrict to morphisms of $\mathcal{K}^{\mathbf{v}}$. For any $j \geq i \geq b$, let $f^{i,j}\colon \mathcal{K}_i \to \mathcal{K}_j$ be the transition simplicial map. For any simplex $\sigma = [u_0, \dots, u_k] \in \mathcal{K}^{\mathbf{v}}_i$, its image under $f^{i,j}$ is the simplex in $\mathcal{K}_j$ spanned by the vertices $\{f^{i,j}(u_0), \dots, f^{i,j}(u_k)\}$. By the definition of the persistence vertex complex $\mathbf{v}$, the map $f^{i,j}$ restricts to a bijection between $\{v_i\}$ and $\{v_j\}$, implying that for any vertex $u \in \mathcal{K}_i$, $f^{i,j}(u) = v_j$ if and only if $u = v_i$. Since $v_i \notin \sigma$ by assumption, it follows that $v_j \notin f^{i,j}(\sigma)$. Consequently, $f^{i,j}$ maps $\mathcal{K}^{\mathbf{v}}_i$ into $\mathcal{K}^{\mathbf{v}}_j$. The cases where $i < b$ or $j < b$ is trivial.

Finally, since the restriction $g^{i,j} = f^{i,j}|_{\mathcal{K}^{\mathbf{v}}_i}$ is inherited from the simplicial maps, the functorial axioms of identity and composition are satisfied automatically. Specifically, for any $i \leq j \leq k$, the relation $g^{j,k} \circ g^{i,j} = (f^{j,k} \circ f^{i,j})|_{\mathcal{K}^{\mathbf{v}}_i} = f^{i,k}|_{\mathcal{K}^{\mathbf{v}}_i} = g^{i,k}$ holds, ensuring that the following diagram commutes:
\begin{equation*}
\xymatrix{
\mathcal{K}^{\mathbf{v}}_i \ar[rr]^{g^{i,j}} \ar@{^{(}->}[d]_{\iota_i} && \mathcal{K}^{\mathbf{v}}_j \ar[rr]^{g^{j,k}} \ar@{^{(}->}[d]_{\iota_j} && \mathcal{K}^{\mathbf{v}}_k \ar@{^{(}->}[d]^{\iota_k} \\
\mathcal{K}_i \ar[rr]_{f^{i,j}} && \mathcal{K}_j \ar[rr]_{f^{j,k}} && \mathcal{K}_k
}
\end{equation*}
This identifies $\mathcal{K}^{\mathbf{v}}$ as a consistent subfunctor of $\mathcal{K}$, as required.
\end{proof}

\begin{definition}
Let $\mathcal{K}$ be a persistence simplicial complex and $\mathbf{v}$ a persistence vertex subcomplex of $\mathcal{K}$. The \textit{persistent local homology} of $\mathcal{K}$ at $\mathbf{v}$ is the persistent relative homology of the pair $(\mathcal{K}, \mathcal{K}^{\mathbf{v}})$. For $i \leq j$ and $n\geq 0$, the $(i,j)$-persistent local homology group is given by
\[
H_n^{i,j}(\mathcal{K}, \mathbf{v}) := \im \left( \bar{f}^{i,j}_{n}\colon H_n(\mathcal{K}_i, \mathcal{K}_i \setminus \st_{\mathcal{K}_i}(\mathbf{v}_i)) \to H_n(\mathcal{K}_j, \mathcal{K}_j \setminus \st_{\mathcal{K}_j}(\mathbf{v}_j)) \right).
\]
\end{definition}

This definition captures how the local topological environment surrounding the evolving vertex $\mathbf{v}$ matures and stabilizes across the filtration. By mapping the local homological classes from scale $i$ to scale $j$, we can distinguish between transient local noise and robust local features that define the site's singularity or manifold-like structure.

\subsection{Spectral theory of DG-morphisms}

\subsubsection{The Moore-Penrose pseudoinverse}

\begin{definition}
A \textit{differential graded (DG) inner product space} is a tuple $(V, d, \langle \cdot, \cdot \rangle)$, where $V = \bigoplus_{n \in \mathbb{Z}} V_n$ is a graded vector space. Each component $V_n$ is equipped with an inner product $\langle \cdot, \cdot \rangle_{V_n}$ such that $V_n \perp V_m$ for $n \neq m$. The differential $d\colon V \to V$ is a linear operator of degree $1$ satisfying the condition $d^2 = 0$. 
\end{definition}

\begin{definition}
A map $f\colon V \to W$ between two DG-inner product spaces is a \textit{DG-morphism} if it is a linear map of degree zero (i.e., $f(V_n) \subseteq W_n$ for all $n$) such that $d_W f = f d_V$.
\end{definition}

\begin{definition}
Let $f\colon V \to W$ be a linear map between inner product spaces. The \textit{Moore-Penrose pseudoinverse} of $f$ is the unique linear map $f^\dagger\colon W \to V$ satisfying the following four Penrose conditions: $(i)$ $f f^\dagger f = f$; $(ii)$ $f^\dagger f f^\dagger = f^\dagger$; $(iii)$ $(f f^\dagger)^* = f f^\dagger$; $(iv)$ $(f^\dagger f)^* = f^\dagger f$.
\end{definition}

\begin{proposition}
Let $V$ and $W$ be finite-dimensional DG-inner product spaces. For any morphism $f\colon V \to W$, the Moore-Penrose pseudoinverse $f^\dagger\colon W \to V$ exists and is unique. 
\end{proposition}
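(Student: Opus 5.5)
The approach is the standard construction of the Moore--Penrose pseudoinverse from the orthogonal decompositions of domain and codomain. Two simplifications come first. The DG structure plays no role: the Penrose conditions only involve $f$, its adjoint, and the inner products. Moreover, $f$ has degree zero and the graded components are mutually orthogonal, so $f=\bigoplus_n f_n$ with $f_n\colon V_n\to W_n$ and $f^\ast=\bigoplus_n f_n^\ast$. It therefore suffices to treat a linear map $f\colon V\to W$ between finite-dimensional inner product spaces and then set $f^\dagger=\bigoplus_n f_n^\dagger$, which is again of degree zero.

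\emph{Existence.} Finite dimensionality gives the orthogonal decompositions $V=\ker f\oplus(\ker f)^\perp$ and $W=\im f\oplus(\im f)^\perp$. The restriction $f|\colon(\ker f)^\perp\to\im f$ is injective, because its kernel is $\ker f\cap(\ker f)^\perp=0$. It is also surjective, since every $f(x)$ equals $f(x')$ where $x'$ is the orthogonal projection of $x$ onto $(\ker f)^\perp$. Define $f^\dagger$ as $(f|)^{-1}$ on $\im f$ and $0$ on $(\im f)^\perp$. Then $ff^\dagger$ is the orthogonal projection $P_{\im f}$ and $f^\dagger f$ is the orthogonal projection $P_{(\ker f)^\perp}$. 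Orthogonal projections are self-adjoint, which gives conditions (iii) and (iv). Conditions (i) and (ii) follow from $P_{\im f}f=f$ and $P_{(\ker f)^\perp}f^\dagger=f^\dagger$, the latter holding because $\im f^\dagger\subseteq(\ker f)^\perp$.

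\emph{Uniqueness.} Suppose $g$ and $h$ both satisfy the four conditions. The plan is the classical chain of equalities using only (i)--(iv) and $(ab)^\ast=b^\ast a^\ast$:
\begin{equation*}
g=gfg=g(fg)^\ast=gg^\ast f^\ast=gg^\ast f^\ast h^\ast f^\ast=g(fg)^\ast(fh)^\ast=gfgfh=gfh,
\end{equation*}
and symmetrically
\begin{equation*}
h=hfh=(hf)^\ast h=f^\ast h^\ast h=f^\ast g^\ast f^\ast h^\ast h=(gf)^\ast(hf)^\ast h=gfhfh=gfh.
\end{equation*}
Hence $g=h$.

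The only step needing care is this uniqueness chain, specifically making sure each substitution $f^\ast=f^\ast h^\ast f^\ast$ (the adjoint of (i)) is applied in the correct position. The existence part and the reduction to graded components are routine.
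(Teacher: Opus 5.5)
Your proof is correct. It shares the paper's outline: split $f=\bigoplus_n f_n$, using that $f$ has degree zero and the graded pieces are mutually orthogonal, then set $f^\dagger=\bigoplus_n f_n^\dagger$. Both halves, however, are argued differently.

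For existence, the paper just invokes the singular value decomposition of each $f_n$. You instead construct $f^\dagger$ without coordinates, by inverting the isomorphism $(\ker f)^\perp\to\im f$ and extending by zero on $(\im f)^\perp$. This is the basis-free content of the SVD. Its advantage is that it gives $ff^\dagger=P_{\im f}$ and $f^\dagger f=P_{(\ker f)^\perp}$ immediately, which is what the paper relies on afterwards (e.g.\ that $\mathrm{id}-f^\dagger f$ is the orthogonal projection onto $\ker f$). The SVD route instead yields an explicit matrix formula convenient for computation.

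For uniqueness, the paper offers no argument and only asserts that $\bigoplus_n f_n^\dagger$ is ``the unique'' pseudoinverse. Your Penrose chain actually proves it, and it checks out line by line, including the substitutions $f^\ast=f^\ast h^\ast f^\ast$ and $f^\ast=f^\ast g^\ast f^\ast$ obtained by taking adjoints of (i). The chain uses only (i)--(iv) and never the grading, so it gives uniqueness among all linear maps $W\to V$, not merely degree-zero ones; that is what the paper's claim really requires.

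Your existence construction also works verbatim for the total map $f\colon V\to W$. So the componentwise reduction is needed only to see that $f^\dagger$ has degree zero.
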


\begin{proof}
Since $V$ and $W$ are finite-dimensional, $f$ can be decomposed into a direct sum of linear maps $f_n\colon V_n \to W_n$. For each $n$, the existence of $f_n^\dagger$ is guaranteed by the singular value decomposition theorem in inner product spaces. Thus, $f^\dagger = \bigoplus_{n \in \mathbb{Z}} f_n^\dagger$ provides the unique pseudoinverse for the entire graded map.
\end{proof}

\begin{proposition}\label{proposition:properties}
Let $f\colon V \to W$ be a DG-morphism between finite-dimensional DG-inner product spaces. The Moore-Penrose pseudoinverse $f^\dagger\colon W \to V$ satisfies the following fundamental properties:
\begin{enumerate}[label=$(\roman*)$]
    \item The compositions $P_{\im (f)} = f f^\dagger$ and $P_{(\ker f)^\perp} = f^\dagger f$ are orthogonal projections onto the range of $f$ in $W$ and the orthogonal complement of the kernel in $V$, respectively.
    \item For any $w \in W$, the vector $v = f^\dagger w$ is the unique minimum-norm least-squares solution to the equation $f(v) = w$. That is, $f^\dagger w = \arg\min \{ \|u\| \colon u \in \arg\min_{z \in V} \|f(z) - w\| \}$.
    \item The pseudoinversion operator is compatible with the adjoint, such that $(f^*)^\dagger = (f^\dagger)^*$.
    \item If $f$ is injective, then $f^\dagger = (f^* f)^{-1} f^*$. In the particular case where $f$ is an isometric embedding, the pseudoinverse reduces to the adjoint, $f^\dagger = f^*$.
\end{enumerate}
\end{proposition}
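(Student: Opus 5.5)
Since $f$ is a DG-morphism between finite-dimensional spaces, it splits as $f=\bigoplus_n f_n$ with $f_n\colon V_n\to W_n$, and by the preceding proposition $f^\dagger=\bigoplus_n f_n^\dagger$. Because all four claims are compatible with this grading, I will prove each one for a single linear map $f\colon V\to W$ between finite-dimensional inner product spaces. The main tool is the singular value decomposition: choose orthonormal bases in which $f=U\Sigma V^*$, with $\Sigma$ diagonal carrying the positive singular values $s_1,\dots,s_r$. Put $f^\dagger:=V\Sigma^+U^*$, where $\Sigma^+$ inverts the nonzero diagonal entries and transposes the shape. Checking the four Penrose conditions for this candidate is a direct diagonal computation, and uniqueness then lets me use this formula freely.

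For $(i)$, conditions $(i)$ and $(iii)$ give that $ff^\dagger$ is idempotent and self-adjoint, hence an orthogonal projection. Its range contains $\im f$ because $f=ff^\dagger f$, and is contained in $\im f$ trivially, so $ff^\dagger=P_{\im f}$. In the same way $f^\dagger f$ is an orthogonal projection with kernel equal to $\ker f$: the inclusion $\ker f\subseteq\ker f^\dagger f$ is obvious, and the reverse follows from $f=f(f^\dagger f)$. Hence $f^\dagger f=P_{(\ker f)^\perp}$.

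For $(ii)$, decompose $w=P_{\im f}w+(w-P_{\im f}w)$. Then $\|f(z)-w\|^2=\|f(z)-P_{\im f}w\|^2+\|w-P_{\im f}w\|^2$, so the least-squares minimizers are exactly the solutions of $f(z)=P_{\im f}w=f(f^\dagger w)$, namely the affine space $f^\dagger w+\ker f$. Since $f^\dagger w=f^\dagger ff^\dagger w$ lies in $(\ker f)^\perp$ by $(i)$, Pythagoras shows it is the unique element of minimal norm.

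For $(iii)$, I take adjoints of the four Penrose conditions for $f$. This shows that $(f^\dagger)^*$ satisfies the four conditions relative to $f^*$: conditions $(i)$ and $(ii)$ transpose to each other, while $(iii)$ and $(iv)$ swap. Uniqueness then gives $(f^*)^\dagger=(f^\dagger)^*$. For $(iv)$, injectivity makes $f^*f$ invertible. Set $g=(f^*f)^{-1}f^*$; then $gf=\mathrm{id}$, so conditions $(i)$, $(ii)$, $(iv)$ are immediate. Condition $(iii)$ holds because $fg=f(f^*f)^{-1}f^*$ is visibly self-adjoint. If $f$ is an isometric embedding, then $f^*f=\mathrm{id}$, which yields $f^\dagger=f^*$.

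The only step needing real care is the bookkeeping in $(iii)$, namely matching each transposed Penrose condition to the right one. The SVD verification is routine, and nothing beyond the previous proposition's uniqueness is needed.
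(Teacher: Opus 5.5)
Your proof is correct and follows essentially the paper's route. The Penrose identities give the projections in (i), and Pythagoras on $f^\dagger w+\ker f$ gives (ii), where you split the residual directly rather than citing the normal equations. For (iii) you verify the four conditions for $(f^\dagger)^*$ and invoke uniqueness, and (iv) is a direct check of $(f^*f)^{-1}f^*$; your SVD preamble is redundant given the preceding proposition. One wording point in (iii): taking adjoints sends conditions (i) and (ii) for $f$ to conditions (i) and (ii) for $f^*$ themselves, e.g.\ $(ff^\dagger f)^*=f^*(f^\dagger)^*f^*$, and only (iii) and (iv) are interchanged. Your phrase ``transpose to each other'' reads like a swap, but this does not affect the conclusion.
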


\begin{proof}
The relevant properties of the Moore-Penrose pseudoinverse are classical \cite{barata2012moore,ben2003generalized}. For completeness and to ensure clarity in this paper, we include the proofs in our setting.

$(i)$ Let $P = f f^\dagger$. From the first and third Penrose conditions, we have $P^2 = (f f^\dagger f) f^\dagger = f f^\dagger = P$ and $P^* = (f f^\dagger)^* = f f^\dagger = P$. Thus, $P$ is an orthogonal projection. Since $\im (f f^\dagger) \subseteq \im (f)$ and $f = f f^\dagger f$ implies $\im (f) \subseteq \im (f f^\dagger)$, we conclude $\im (P) = \im (f)$. A symmetric argument shows that $f^\dagger f$ is the orthogonal projection onto $\im (f^\dagger)$. By the fourth condition, $\im (f^\dagger) = \im (f^\dagger f) = \ker(f^\dagger f)^\perp = (\ker f)^\perp$.

$(ii)$ For any $w \in W$, let $v = f^\dagger w$. Any solution $u$ to the least-squares problem must satisfy the normal equation $f^* f u = f^* w$. Substituting $w = f v + (w - f v)$, where $w - f v = (I - f f^\dagger) w \in \im (f)^\perp = \ker(f^*)$, we see that $v$ satisfies the normal equation. Since $\im (f^\dagger) = (\ker f)^\perp$, $v$ is orthogonal to $\ker(f)$. By the Pythagorean theorem, for any other least-squares solution $v' = v + z$ with $z \in \ker(f)$, we have $\|v'\|^2 = \|v\|^2 + \|z\|^2 > \|v\|^2$ unless $z=0$.

$(iii)$ Let $G = (f^\dagger)^*$. We verify that $G$ satisfies the four Penrose conditions for $f^*$ as follows:
\begin{align*}
    f^* G f^* &= f^* (f^\dagger)^* f^* = (f f^\dagger f)^* = f^*, \\
    G f^* G &= (f^\dagger)^* f^* (f^\dagger)^* = (f^\dagger f f^\dagger)^* = (f^\dagger)^*, \\
    (f^* G)^* &= (f^* (f^\dagger)^*)^* = f^\dagger f, \\
    (G f^*)^* &= ((f^\dagger)^* f^*)^* = f f^\dagger.
\end{align*}
Since $f^\dagger f$ and $f f^\dagger$ are self-adjoint, the conditions $(f^* G)^* = f^* G$ and $(G f^*)^* = G f^*$ hold. By the uniqueness of the pseudoinverse, $(f^*)^\dagger = (f^\dagger)^*$.

$(iv)$ If $f$ is injective, then $f^* f$ is an invertible operator on $V$. It is straightforward to verify that $g = (f^* f)^{-1} f^*$ satisfies the four conditions. For instance, $f g f = f (f^* f)^{-1} (f^* f) = f$. For the isometric case, $f^* f = \text{id}_V$, which directly implies $f^\dagger = (\text{id}_V)^{-1} f^* = f^*$.
\end{proof}

\subsubsection{The generalized morphism Laplacian}

From now on, for convenience, the DG-inner product spaces considered are always assumed to be finite-dimensional. 

Let $(V,d_V,\langle \cdot, \cdot \rangle)$ and $(W,d_W,\langle \cdot, \cdot \rangle)$ be DG-inner product spaces, and let $f\colon V\to W$ be a DG-morphism. The \textit{persistence domain} of $f$ is the space 
\begin{equation*}
\Theta_{f} := \{x \in W \mid d_W x \in \im f \}.
\end{equation*}
Let $\iota\colon \Theta_f \hookrightarrow W$ denote the canonical inclusion.
\begin{equation*}
\xymatrix@1{ \Theta_{f} \ar[r]^{\iota} & W \ar[r]^{d_W} & W \ar[r]^{f^\dagger} & V }.
\end{equation*}
\begin{definition}
Let $f\colon V\to W$ be a DG-morphism between $V$ and $W$.
The \textit{generalized pullback differential} $\delta_f\colon \Theta_f \to V$ as
\begin{equation*}
\delta_f := f^\dagger d_W \iota.
\end{equation*}
\end{definition}

This construction yields the following sequence
\begin{equation*}
\xymatrix@1{ \Theta_{f} \ar[r]^{\delta_f} & V \ar[r]^{d_V} & V \ar[r]^{f} & W }.
\end{equation*}

\begin{lemma}
For any $x \in \Theta_f$, let $\delta_f(x) = f^\dagger d_W \iota(x)$. Then we have $d_V \delta_f(x) \in \ker(f)$. In particular, if $f$ is injective, then $d_V \delta_f = 0$.
\end{lemma}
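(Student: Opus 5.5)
The plan is to show that $f d_V \delta_f(x) = 0$ for every $x \in \Theta_f$, using the chain-map identity $f d_V = d_W f$ together with Proposition~\ref{proposition:properties}$(i)$. For $x \in \Theta_f$, write $w = d_W \iota(x) = d_W x$. By the definition of the persistence domain, $w \in \im f$.

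The computation proceeds in three steps.

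\begin{itemize}
\item First, apply the DG-morphism property to move $f$ past the differential:
\[
f\, d_V\, \delta_f(x) = f\, d_V\, f^\dagger w = d_W\, f f^\dagger w .
\]
\item Second, invoke Proposition~\ref{proposition:properties}$(i)$. The operator $f f^\dagger$ is the orthogonal projection onto $\im f$, and $w \in \im f$, so $f f^\dagger w = w$.
\item Third, use $d_W^2 = 0$:
\[
f\, d_V\, \delta_f(x) = d_W w = d_W d_W x = 0 .
\]
\end{itemize}

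This shows $d_V \delta_f(x) \in \ker f$.

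For the ``in particular'' clause: if $f$ is injective, then $\ker f = 0$, so $d_V \delta_f(x) = 0$ for all $x \in \Theta_f$, i.e.\ $d_V \delta_f = 0$.

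The main point needing care is the second step. Without membership in $\Theta_f$ one would only know that $f f^\dagger$ is a projection, and $f f^\dagger w$ need not equal $w$. It is exactly the defining condition $d_W x \in \im f$ of the persistence domain that makes $f f^\dagger$ act as the identity on $w$. One should also note that all maps preserve or shift degree consistently, since $f^\dagger$ is degree-preserving by its componentwise construction. The argument can therefore be run degree by degree, and no grading issues arise.
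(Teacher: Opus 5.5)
Your proof is correct and follows the same strategy as the paper: you show $f\, d_V\, \delta_f(x) = 0$ using the chain-map identity, a Penrose projection property, and $d_W^2 = 0$. The only difference is the order of steps. You commute $f$ past $d_V$ first and then use $f f^\dagger w = w$ on $\im f$. The paper instead writes $d_W x = f(v)$ and uses $f^\dagger f = P_{(\ker f)^\perp}$, which then requires the extra observation that $f\, d_V(\ker f) = 0$; your ordering makes that step unnecessary.
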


\begin{proof}
By the definition of the persistence domain $\Theta_f$, for any $x \in \Theta_f$, there exists some $v \in V$ such that $d_W \iota(x) = f(v)$. Applying the Moore-Penrose pseudoinverse $f^\dagger$ to both sides, we obtain
\begin{equation*}
    \delta_f(x) = f^\dagger f(v) = P_{(\ker f)^\perp}(v).
\end{equation*}
To verify the consistency with the differential $d_V$, we examine the image of $\delta_f(x)$ under $f \circ d_V$ as follows:
\begin{align}
    f d_V \delta_f(x) &= f d_V P_{(\ker f)^\perp}(v) \nonumber \\
    &= f d_V v \label{eq:proj_nroperty} \\
    &= d_W f(v) \label{eq:morphism_nroperty} \\
    &= d_W^2 \iota(x) = 0. \nonumber
\end{align}
In the above derivation, \eqref{eq:proj_nroperty} follows from the fact that $v - P_{(\ker f)^\perp}(v) \in \ker(f)$ and the morphism property $d_W f = f d_V$ implies $f d_V (\ker f) \subseteq d_W f (\ker f) = 0$. Step \eqref{eq:morphism_nroperty} further utilizes the intertwining property of the morphism $f$. The identity $f(d_V \delta_f(x)) = 0$ implies that $d_V \delta_f(x) \in \ker(f)$. If $f$ is injective, $\ker(f) = \{0\}$, which directly yields $d_V \delta_f = 0$.
\end{proof}

\begin{remark}
While $f$ satisfies the morphism condition $d_W f = f d_V$, its pseudoinverse $f^\dagger$ generally does not satisfy $d_V f^\dagger = f^\dagger d_W$. However, the composite operator $\delta_f = f^\dagger d_W \iota$ remains a valid pullback differential for the construction of morphism Laplacians provided that $d_V \delta_f = 0$ is satisfied on the relevant subspace $(\ker f)^\perp$.
\end{remark}

\begin{definition}
The \textit{generalized Laplacian} $\Delta^f\colon V \to V$ associated with $f\colon V \to W$ is the operator
\begin{equation*}
\Delta^f := \delta_f \delta_f^* +  d_V^* d_V + (\mathrm{id} - f^\dagger f) .
\end{equation*}
\end{definition}

\begin{proposition}
The generalized Laplacian $\Delta^f\colon V \to V$ is a self-adjoint and positive semi-definite operator.
\end{proposition}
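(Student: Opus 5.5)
The plan is to show that each of the three summands of $\Delta^f = \delta_f\delta_f^* + d_V^*d_V + (\mathrm{id}-f^\dagger f)$ is self-adjoint and positive semi-definite on $V$. Both properties are preserved under finite sums, so the claim then follows directly. Everything is finite-dimensional, so every adjoint exists. Here $\Theta_f \subseteq W$ carries the inner product restricted from $W$, so $\delta_f\colon \Theta_f\to V$ has a well-defined Hilbert adjoint $\delta_f^*\colon V\to\Theta_f$. We work degreewise, with $f=\bigoplus_n f_n$, since every operator involved is homogeneous and the graded pieces are mutually orthogonal.

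\emph{First two terms.} For any linear map $A$ between finite-dimensional inner product spaces, $AA^*$ and $A^*A$ are self-adjoint:
\[
(AA^*)^* = A^{**}A^* = AA^*.
\]
They are also positive semi-definite:
\[
\langle AA^*x,x\rangle=\|A^*x\|^2\ge 0, \qquad \langle A^*Ax,x\rangle=\|Ax\|^2\ge 0.
\]
Applying this with $A=\delta_f$ handles $\delta_f\delta_f^*$, and applying it with $A=d_V$ handles $d_V^*d_V$.

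\emph{Third term.} By Proposition~\ref{proposition:properties}$(i)$, $f^\dagger f = P_{(\ker f)^\perp}$ is an orthogonal projection. Hence $\mathrm{id}-f^\dagger f = P_{\ker f}$ is the complementary orthogonal projection. It is self-adjoint because $\mathrm{id}$ and $f^\dagger f$ are self-adjoint, the latter by the fourth Penrose condition. It is idempotent, so
\[
\langle P_{\ker f}x,x\rangle=\langle P_{\ker f}x,P_{\ker f}x\rangle=\|P_{\ker f}x\|^2\ge 0.
\]

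Summing the three pieces, we obtain for all $x\in V$
\[
\langle \Delta^f x,x\rangle=\|\delta_f^*x\|^2+\|d_Vx\|^2+\|P_{\ker f}x\|^2\ge 0,
\]
and $\Delta^f$ is self-adjoint. There is no real obstacle here. The one point that needs care is that $\delta_f^*$ must be taken with respect to the subspace inner product on $\Theta_f$ rather than on all of $W$. With that convention the identity $\langle\delta_f\delta_f^*x,x\rangle=\|\delta_f^*x\|_{\Theta_f}^2$ is immediate.
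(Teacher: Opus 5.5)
Your proof is correct and follows essentially the same route as the paper: each of the three summands is self-adjoint (using the fourth Penrose condition for $\mathrm{id}-f^\dagger f$), and idempotence gives $\langle \Delta^f x,x\rangle=\|\delta_f^*x\|^2+\|d_Vx\|^2+\|(\mathrm{id}-f^\dagger f)x\|^2\ge 0$. Your remark that $\delta_f^*$ is taken with respect to the inner product on $\Theta_f$ restricted from $W$ is a useful clarification that the paper leaves implicit.
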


\begin{proof}
The self-adjointness of $\Delta^f$ follows from the self-adjointness of the three operators $\delta_f \delta_f^*$, $d_V^* d_V$, and $\mathrm{id} - f^\dagger f$. Here, $\mathrm{id} - f^\dagger f$ is self-adjoint since the Moore-Penrose pseudoinverse satisfies $(f^\dagger f)^* = f^\dagger f$.

Next, we demonstrate the positive semi-definiteness. For any vector $v \in V$, the quadratic form associated with $\Delta^f$ is given by
\begin{align*}
    \langle \Delta^f v, v \rangle_V &= \langle \delta_f \delta_f^* v, v \rangle_V + \langle d_V^* d_V v, v \rangle_V + \langle (\mathrm{id} - f^\dagger f) v, v \rangle_V \\
    &= \langle \delta_f^* v, \delta_f^* v \rangle_V + \langle d_V v, d_V v \rangle_V + \langle (\mathrm{id} - f^\dagger f)^2 v, v \rangle_V \\
    &= \|\delta_f^* v\|_V^2 + \|d_V v\|_V^2 + \|(\mathrm{id} - f^\dagger f) v\|_V^2,
\end{align*}
where we utilized the fact that $\mathrm{id} - f^\dagger f$ is an idempotent self-adjoint operator. Since the squared norm of any vector in an inner product space is non-negative, it follows that $\langle \Delta^f v, v \rangle_V \geq 0$ for all $v \in V$. This completes the proof.
\end{proof}

\subsubsection{Generalized persistent harmonic space}

Let $V$ and $W$ be DG-inner product spaces, and let $f\colon V \to W$ be a DG-morphism.

\begin{definition}
The \textit{persistent harmonic space} associated with the DG-morphism $f\colon V \to W$, denoted by $\mathcal{H}(f)$, is defined as the kernel of the generalized Laplacian operator
\begin{equation*}
    \mathcal{H}^{f}:= \ker(\Delta^f) = \{ v \in V \mid \Delta^f v = 0 \}.
\end{equation*}
Elements of $\mathcal{H}(f)$ are referred to as \textit{$f$-harmonic representatives}.
\end{definition}

\begin{proposition}\label{proposition:generalized_harmonic}
The persistent  harmonic space $\mathcal{H}^{f}\coloneqq \ker \Delta^f$ is characterized by
\begin{equation*}
    \mathcal{H}^{f}=  \ker d_V \cap (\im \delta_f)^\perp \cap (\ker f)^\perp.
\end{equation*}
\end{proposition}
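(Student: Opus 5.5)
The characterization follows from the quadratic form computed in the proof that $\Delta^f$ is positive semi-definite. For every $v \in V$ that computation gives
\begin{equation*}
\langle \Delta^f v, v\rangle_V = \|\delta_f^* v\|_V^2 + \|d_V v\|_V^2 + \|(\mathrm{id} - f^\dagger f) v\|_V^2 .
\end{equation*}
We first show that $\ker \Delta^f$ equals the set of $v$ at which this quadratic form vanishes. If $\Delta^f v = 0$, the left-hand side is zero. Conversely, suppose $\langle \Delta^f v, v\rangle = 0$. Since $\Delta^f$ is self-adjoint and positive semi-definite, it has a square root $(\Delta^f)^{1/2}$, and $\|(\Delta^f)^{1/2} v\|^2 = 0$. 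Hence $(\Delta^f)^{1/2} v = 0$, and so $\Delta^f v = 0$. Because each of the three terms is non-negative, we obtain
\begin{equation*}
\ker\Delta^f = \ker \delta_f^* \cap \ker d_V \cap \ker(\mathrm{id} - f^\dagger f).
\end{equation*}

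Next we identify each factor with the corresponding space in the statement.
\begin{itemize}
\item \emph{Adjoint factor.} By the standard finite-dimensional identity $\ker T^* = (\im T)^\perp$, applied to $T = \delta_f\colon \Theta_f \to V$, we get $\ker \delta_f^* = (\im \delta_f)^\perp$ inside $V$.
\item \emph{Projection factor.} By Proposition~\ref{proposition:properties}$(i)$, $f^\dagger f = P_{(\ker f)^\perp}$. Hence $\mathrm{id} - f^\dagger f$ is the orthogonal projection onto $\ker f$, and its kernel is $(\ker f)^\perp$.
\end{itemize}
Substituting these two identities into the intersection gives exactly $\ker d_V \cap (\im \delta_f)^\perp \cap (\ker f)^\perp$.

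The only real subtlety is in the adjoint factor. The map $\delta_f$ is defined on the subspace $\Theta_f \subseteq W$, so we must check that its adjoint makes sense and that $\ker\delta_f^* = (\im\delta_f)^\perp$ holds. Here $\Theta_f$ carries the inner product restricted from $W$, and $\delta_f^*\colon V \to \Theta_f$ is the Hilbert adjoint for that restricted inner product. With this convention, $\langle \delta_f^* v, x\rangle = \langle v, \delta_f x\rangle$ for all $x \in \Theta_f$. So $\delta_f^* v = 0$ exactly when $v \perp \delta_f(\Theta_f)$, which is the required identity.
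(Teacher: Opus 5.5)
Your proposal is correct and follows essentially the same route as the paper: both expand $\langle \Delta^f v, v\rangle$ into the three nonnegative terms $\|\delta_f^* v\|^2 + \|d_V v\|^2 + \|(\mathrm{id}-f^\dagger f)v\|^2$ and conclude that $\Delta^f v = 0$ exactly when all three vanish. Both then identify $\ker\delta_f^* = (\im\delta_f)^\perp$ and $\ker(\mathrm{id}-f^\dagger f) = (\ker f)^\perp$. Your square-root argument and your remark that the adjoint of $\delta_f$ is taken with respect to the inner product restricted to $\Theta_f$ make explicit steps the paper leaves implicit.
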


\begin{proof}
By the self-adjoint and positive semi-definite nature of $\Delta^f$, an element $v \in V$ belongs to the kernel $\mathcal{H}(f)$ if and only if the associated quadratic form vanishes
\begin{equation*}
    \langle \Delta^f v, v \rangle = \|\delta_f^* v\|^2 + \|d_V v\|^2 + \|(\mathrm{id} - f^\dagger f)v\|^2 = 0.
\end{equation*}
This identity holds if and only if each non-negative term vanishes simultaneously:
\begin{enumerate}[label=$(\roman*)$]
    \item $\|d_V v\|^2 = 0 \iff v \in \ker d_V$, ensuring $v$ is a cycle in $V$;
    \item $\|\delta_f^* v\|^2 = 0 \iff v \in (\im \delta_f)^\perp$ by the fundamental relation $\ker \delta_f^* = (\im \delta_f)^\perp$;
    \item $\|(\mathrm{id} - f^\dagger f)v\|^2 = 0 \iff v \in \ker(\mathrm{id} - f^\dagger f)$.
\end{enumerate}
Recall that $\mathrm{id} - f^\dagger f$ is the orthogonal projection onto $\ker f$; thus, $\ker(\mathrm{id} - f^\dagger f) = (\ker f)^\perp$. Consequently, we obtain the desired result $\mathcal{H}^{f}= \ker d_V \cap (\im \delta_f)^\perp \cap (\ker f)^\perp$.
\end{proof}

In the particular case where $f\colon V \to W$ is an isometric embedding, the Moore-Penrose pseudoinverse satisfies $f^\dagger f = \mathrm{id}_V$. Consequently, the generalized Laplacian $\Delta^f$ reduces to
\begin{equation*}
    \Delta^f = \delta_f \delta_f^* + d_V^* d_V,
\end{equation*}
which is precisely the definition of the persistent Laplacian introduced in \cite{memoli2022persistent, wang2020persistent}. Furthermore, under this injectivity assumption, the kernel $\ker \Delta^f$ coincides with the persistent harmonic space characterized in \cite{liu2024algebraic}. Thus, our construction can be viewed as a direct generalization of these constructions to morphisms with non-trivial kernels.

\subsubsection{Persistent Hodge theorem}

Let $f \colon V \to W$ be a DG-morphism between DG-inner product spaces $V$ and $W$. The morphism $f$ naturally induces a linear map between the corresponding homology groups:
\begin{equation*}
    H(f) \colon H(V) \to H(W), \quad [v] \mapsto [f(v)].
\end{equation*}
The \textit{persistent homology} associated with $f$ is defined as the image of this induced map
\begin{equation*}
    H^{f} = \im \left( H(f) \colon H(V) \to H(W) \right) \subseteq H(W).
\end{equation*}
In the specific case where $f$ is an isometric embedding, there exists a natural isomorphism between the persistent harmonic space and the persistent homology. However, the situation becomes considerably more involved when $f$ is an arbitrary DG-morphism. To address these complexities, we provide an explicit characterization of this relationship in the following discussion.

\begin{theorem}[Hodge isomorphism theorem]\label{theorem:hodge_isomorphism}
The linear map
\begin{equation*}
  \Psi\colon \ker \Delta^f \to H^{f}, \quad  v\mapsto [f(v)]
\end{equation*}
is an isomorphism of linear spaces.
\end{theorem}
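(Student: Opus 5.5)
The plan is to use the characterization of Proposition~\ref{proposition:generalized_harmonic}, $\ker\Delta^f=\ker d_V\cap(\im\delta_f)^\perp\cap(\ker f)^\perp$. I will also use two facts from Proposition~\ref{proposition:properties}: $f$ restricts to a linear isomorphism $(\ker f)^\perp\to\im f$, and $f^\dagger f$ is the identity on $(\ker f)^\perp$. Write $Z_0:=\ker d_V\cap(\ker f)^\perp$, so that $\ker\Delta^f=Z_0\cap(\im\delta_f)^\perp$.

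\emph{Well-definedness.} If $v\in\ker\Delta^f$, then $d_V v=0$, so $d_W f(v)=f(d_V v)=0$. Hence $[f(v)]$ is a homology class lying in $\im H(f)=H^f$. The map $\Psi$ is clearly linear.

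\emph{Injectivity.} Let $v\in\ker\Delta^f$ with $[f(v)]=0$, so $f(v)=d_W x$ for some $x\in W$. Then $d_W x\in\im f$, hence $x\in\Theta_f$. Because $v\in(\ker f)^\perp$,
\[
\delta_f(x)=f^\dagger d_W x=f^\dagger f(v)=v .
\]
Thus $v\in\im\delta_f$. Since also $v\perp\im\delta_f$, we get $\|v\|^2=0$, so $v=0$. This step is clean.

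\emph{Surjectivity.} This is where the work lies; I would argue in two stages.

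First I claim that every class in $H^f$ has a representative $f(z)$ with $z\in Z_0$. Take a class $[f(u)]$ with $d_V u=0$, and split $u=u'+k$ with $u'=f^\dagger f u\in(\ker f)^\perp$ and $k\in\ker f$. Then $f(u')=f(u)$, but $u'$ need not be a cycle: $d_V u'=-d_V k$. The Lemma preceding the Remark shows $f\,d_V(\ker f)=0$, so $d_V k\in\ker f$.

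To produce a cycle, I would look for a correction $k'\in\ker f$ such that $z=u-k'$ lies in $Z_0$. The first thing I would try is an orthogonal-projection argument inside $\ker d_V$: take $z$ to be the orthogonal projection of $u$ onto $(\ker f\cap\ker d_V)^\perp\cap\ker d_V$. Then $u-z\in\ker f$, so $f(z)=f(u)$. I would then argue that $z\perp\ker f$ by comparing $\ker f$ with $\ker f\cap\ker d_V$ through the adjoint relations $d_V^*(\ker f)^\perp\subseteq(\ker f)^\perp$.

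\emph{Main obstacle.} This adjoint compatibility is the step I expect to be hardest. It requires understanding how $d_V^*$ interacts with the subcomplex $\ker f$. I would first test it against the shape of $\delta_f$ and the second Penrose identity.

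Second, given $z\in Z_0$, I subtract its component along $\im\delta_f$. For this I need $\im\delta_f\subseteq Z_0$. The inclusion $\im\delta_f\subseteq(\ker f)^\perp$ holds because $\im f^\dagger=(\ker f)^\perp$. For the cycle condition, the preceding Lemma gives $d_V\delta_f(x)\in\ker f$, and I would upgrade this to $d_V\delta_f=0$ by the same compatibility established above.

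Granting that, every $w=\delta_f(x)$ satisfies $f(w)=f f^\dagger d_W x=d_W x$, because $d_W x\in\im f$. So $[f(w)]=0$. Writing $z=h+w$ with $h\in Z_0\cap(\im\delta_f)^\perp=\ker\Delta^f$ and $w\in\im\delta_f$ then gives $\Psi(h)=[f(z)]=[f(u)]$, which completes surjectivity.
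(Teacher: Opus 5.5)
Your well-definedness and injectivity arguments are correct. The observation $\delta_f(x)=f^\dagger f(v)=v$ is also cleaner than the paper's chain of equalities, which uses the unjustified step $\langle f^*fv,\cdot\rangle_V=\langle v,\cdot\rangle_V$. The gap is in surjectivity, and it cannot be closed. The relation you cite, $d_V^*(\ker f)^\perp\subseteq(\ker f)^\perp$, is true, since it is dual to $d_V(\ker f)\subseteq\ker f$. But it points the wrong way. Both of your stages need $d_V\bigl((\ker f)^\perp\bigr)\subseteq(\ker f)^\perp$, equivalently $d_V^*(\ker f)\subseteq\ker f$, i.e.\ that $f^\dagger f$ commutes with $d_V$. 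This fails already for simplicial maps:
\begin{itemize}
\item Let $K$ be the hollow square with edges $e_{01},e_{12},e_{23},e_{30}$, where $e_{ij}=[v_i,v_j]$.
\item Let $L$ be the hollow triangle on $w_0,w_1,w_2$.
\item Send $v_k\mapsto w_k$ for $k\le 2$ and $v_3\mapsto w_0$.
\end{itemize}
Then $\ker f_1=\mathrm{span}(e_{30})$, and $Z_1(K)=\mathrm{span}(z)$ with $z=e_{01}+e_{12}+e_{23}+e_{30}$. Hence $Z_1(K)\cap(\ker f_1)^\perp=0$, and therefore $\ker\Delta^f_1=0$. Yet $f(z)=[w_0,w_1]+[w_1,w_2]+[w_2,w_0]$ is a nonzero class in $H_1(L)$, so $H^f_1\cong\mathbb{R}$.

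Your Stage~1 breaks exactly here: $f^\dagger f z=e_{01}+e_{12}+e_{23}$ has boundary $v_3-v_0$. Stage~2 fails too. If you fill the triangle of $L$, then $\delta_f([w_0,w_1,w_2])=e_{01}+e_{12}+e_{23}$, so $d_V\delta_f\neq 0$.

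So the theorem is false in the stated generality, and the paper's proof has the same hole. It treats $c'=f^\dagger f(c)$ as a cycle, and it splits $Z(V)$ orthogonally along $\im\delta_f$, which presupposes $\im\delta_f\subseteq Z(V)$. These are exactly the two facts that fail above.

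What your argument does prove is the theorem under the extra hypothesis $d_V^*(\ker f)\subseteq\ker f$. In that case:
\begin{itemize}
\item $f^\dagger f$ is a chain map, so projecting a cycle onto $(\ker f)^\perp$ gives a cycle with the same image under $f$ (Stage~1).
\item The Lemma upgrades to $d_V\delta_f(x)\in\ker f\cap(\ker f)^\perp=0$ (Stage~2).
\item Your orthogonal splitting inside $Z_0$ then finishes surjectivity.
\end{itemize}
The hypothesis holds whenever $f$ is injective. In particular it covers the isometric embeddings underlying the persistent local Laplacian built on inclusions later in the paper.
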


\begin{proof}
By Proposition \ref{proposition:generalized_harmonic}, the kernel of $\Delta^f$ coincides with the intersection of the kernels of its constituent summands
\begin{equation*}
\ker \Delta^f = \ker(\delta_f^*) \cap \ker(d_V) \cap \ker(\mathrm{id} - f^\dagger f).
\end{equation*}
Accordingly, an element $v$ belongs to $\ker \Delta^f$ if and only if it satisfies the following conditions:
\begin{itemize}[label=$(\roman*)$]
    \item[(i)] $v \in Z(V) = \ker d_V$, identifying $v$ as a cycle in $V$;
    \item[(ii)] $v \in (\ker f)^\perp$, which follows from $(\mathrm{id} - f^\dagger f)v = 0$;
    \item[(iii)] $v \in \ker \delta_f^* = (\im \delta_f)^\perp$, i.e., $\langle v, f^\dagger d_W z \rangle_V = 0$ for all $z \in \Theta_f$.
\end{itemize}

\textit{Injectivity:} 
Suppose $v \in \ker \Delta^f$ such that $\Psi(v) = [f(v)] = 0$ in $H(W)$. Then $f(v)$ is a boundary in $W$, implying there exists some $x \in \Theta_f$ such that $f(v) = d_W x$. Since $f f^\dagger$ is the orthogonal projection onto $\im f$ and $d_W x \in \im f$, we have $f f^\dagger d_W x = d_W x$. Utilizing condition (iii), we observe
\begin{equation*}
\| f(v) \|_W^2 = \langle f(v), d_W x \rangle_W = \langle f(v), f f^\dagger d_W x \rangle_W = \langle f^* f v, f^\dagger d_W x \rangle_V = \langle v, f^\dagger d_W x \rangle_V = 0.
\end{equation*}
This necessitates $f(v) = 0$, whence $v \in \ker f$. Combined with condition (ii), which requires $v \in (\ker f)^\perp$, we conclude $v \in \ker f \cap (\ker f)^\perp = \{0\}$. Thus, $\Psi$ is injective.

\textit{Surjectivity:} 
Let $\xi \in H^{f} \subseteq H(W)$. By definition, there exists a cycle $c \in Z(V)$ such that $[f(c)] = \xi$. Let $c' = f^\dagger f (c)$ be the orthogonal projection of $c$ onto $(\ker f)^\perp$. Since $f(c') = f(c)$, it follows that $[f(c')] = \xi$. We now apply the orthogonal decomposition of $Z(V)$ associated with the operator $\delta_f \delta_f^*$ as follows
\begin{equation*}
Z(V) = \im(\delta_f|_{Z(V)}) \oplus (\ker \delta_f^* \cap Z(V)).
\end{equation*}
Decomposing $c' = v + \delta_f z$ where $v \in \ker \delta_f^* \cap Z(V)$ and $z \in \Theta_f$, we observe that both $c'$ and $\delta_f z = f^\dagger d_W z$ reside in $(\ker f)^\perp$. Consequently, their difference $v$ must also lie in $(\ker f)^\perp$. Hence, $v$ satisfies conditions (i), (ii), and (iii), proving $v \in \ker \Delta^f$. Finally, we have
\begin{equation*}
f(v) = f(c' - \delta_f z) = f(c') - f f^\dagger d_W z = f(c') - d_W z.
\end{equation*}
In homology, $[f(v)] = [f(c')] - [d_W z] = [f(c')] = \xi$, which establishes the surjectivity of $\Psi$.
\end{proof}

When $f$ is an isometric embedding, $\Delta^f$ recovers the classical persistent Laplacian, and the theorem reduces to the standard persistent Hodge isomorphism.

\begin{remark}
A similar extension of the persistent Laplacian to general simplicial maps was explored in \cite{gulen2023generalization}. The core strategy therein involves restricting the morphism $f \colon V \to W$ to a smaller subspace, thereby inducing an isometric isomorphism $\hat{f} \colon (\ker f)^{\perp} \to \im f$ to facilitate the definition of the persistent Laplacian. In contrast, our construction does not impose such isometric constraints on the morphism $f$ itself. This approach is more consistent with the intrinsic nature of persistent homology, which is defined independently of any specific inner product structure. 
\end{remark}

\begin{example}
Consider the $1$-skeleton of a simplicial complex $K$ consisting of two triangles $\{v_0, v_1, v_2\}$ and $\{v_1, v_2, v_3\}$ sharing a common edge $e_{12}$. The target space $L$ is a single triangle $\{w_0, w_1, w_2\}$. The $1$-cycle space $Z_1(K)$ is spanned by the two fundamental cycles
\begin{equation*}
    \gamma_1 = e_{01} + e_{12} - e_{02}, \quad \gamma_2 = e_{12} + e_{23} - e_{13},
\end{equation*}
where $e_{ij}$ denotes the oriented edge from $v_i$ to $v_j$. We define a simplicial map $\phi\colon K \to L$ via the vertex assignments $\phi(v_0) = w_0$, $\phi(v_1) = w_1$, $\phi(v_2) = w_2$, and $\phi(v_3) = w_0$.
\begin{figure}[ht]
    \centering
    \begin{tikzpicture}[scale=1, thick] 
        \begin{scope}
            \coordinate (v1) at (0,1);
            \coordinate (v2) at (0,-1);
            \coordinate (v0) at (-1.5,0);
            \coordinate (v3) at (1.5,0);
            
            \fill[blue!15] (v0) -- (v1) -- (v2) -- cycle;
            \fill[red!15] (v3) -- (v1) -- (v2) -- cycle;
            
            \draw (v0) -- (v1) node[midway, left, anchor=south east] {$e_{01}$};
            \draw (v1) -- (v2) node[midway, right] {$e_{12}$};
            \draw (v2) -- (v0) node[midway, left, anchor=north east] {$e_{20}$};
            \draw (v1) -- (v3) node[midway, right, anchor=south west] {$e_{13}$};
            \draw (v3) -- (v2) node[midway, right, anchor=north west] {$e_{32}$};
            
            \fill (v0) circle (1.5pt) node[left] {$v_0$};
            \fill (v1) circle (1.5pt) node[above] {$v_1$};
            \fill (v2) circle (1.5pt) node[below] {$v_2$};
            \fill (v3) circle (1.5pt) node[right] {$v_3$};
            \node at (0,-1.8) {$K$};
        \end{scope}

        \draw[->, very thick] (2.5,0) -- (3.5,0) node[midway, above] {$\phi$};

        \begin{scope}[xshift=6cm]
            \coordinate (w1) at (0,1);
            \coordinate (w2) at (0,-1);
            \coordinate (w0) at (-1.5,0);
            
            \fill[blue!15] (w0) -- (w1) -- (w2) -- cycle;
            
            \draw (w0) -- (w1) node[midway, left, anchor=south east] {$E_{01}$};
            \draw (w1) -- (w2) node[midway, right] {$E_{12}$};
            \draw (w2) -- (w0) node[midway, left, anchor=north east] {$E_{20}$};
            
            \fill (w0) circle (1.5pt) node[left] {$w_0$};
            \fill (w1) circle (1.5pt) node[above] {$w_1$};
            \fill (w2) circle (1.5pt) node[below] {$w_2$};
            \node at (-0.75,-1.8) {$L$};
        \end{scope}
    \end{tikzpicture}
    \caption{Illustration of the simplicial map $\phi\colon K \to L$.}
\end{figure}
The induced chain map $f \colon C_1(K) \to C_1(L)$ is given by
\begin{equation*}
    f(e_{01}) = E_{01}, \quad f(e_{12}) = E_{12}, \quad f(e_{02}) = E_{02}, \quad f(e_{13}) = -E_{01}, \quad f(e_{23}) = -E_{02},
\end{equation*}
where $E_{ij}$ are the edges of $L$. Under the induced homology map $H_1(\phi)$, the images of the cycles are given by $H_1(\phi)([\gamma_1]) = [E_{01} + E_{12} - E_{02}] = [\Gamma_L]$ and $H_1(\phi)([\gamma_2]) = [E_{12} - E_{02} + E_{01}]$. Since both cycles map to the same non-trivial class in $H_1(L)$, the persistent homology $H^f$ is a one-dimensional subspace spanned by $[\Gamma_L]$.

To verify the Hodge isomorphism, we identify the harmonic representative $v \in \ker \Delta^f$. According to Theorem \ref{theorem:hodge_isomorphism}, $v$ must reside in $Z_1(K) \cap (\ker f)^\perp$. The kernel of the chain map $f$ is spanned by the elements
\begin{equation*}
    \eta_1 = e_{01} + e_{13}, \quad \eta_2 = e_{02} + e_{23}.
\end{equation*}
Let $v = c_1 \gamma_1 + c_2 \gamma_2$ be a general element in $Z_1(K)$. The orthogonality condition $v \in (\ker f)^\perp$ requires $\langle v, \eta_1 \rangle = 0$ and $\langle v, \eta_2 \rangle = 0$. Under the standard inner product where the edges form an orthonormal basis, these conditions yield
\begin{equation*}
    \langle c_1 \gamma_1 + c_2 \gamma_2, e_{01} + e_{13} \rangle = c_1 - c_2 = 0, \quad \langle c_1 \gamma_1 + c_2 \gamma_2, e_{02} + e_{23} \rangle = - c_1 + c_2 = 0.
\end{equation*}
Thus, we must have $c_1 = c_2$, which implies that the harmonic space $\ker \Delta^f$ is one-dimensional and spanned by $\gamma_1 + \gamma_2$. This result implies that $\dim(\ker \Delta^f) = \dim(H^f) = 1$. More precisely, the linear map
\begin{equation*}
   \Psi \colon \mathrm{span}\{\gamma_1 + \gamma_2\} \to \mathrm{span}\{[\Gamma_{L}]\}, \quad v \mapsto [f(v)]
\end{equation*}
is an isomorphism between the harmonic space and the persistent homology. 
\end{example}

\subsection{Persistent local Laplacian}

\subsubsection{Persistent relative Laplacian}

Let $\mathcal{K} \colon (\mathbb{Z}, \leq) \to \mathbf{Simp}$ be a persistence simplicial complex, and let $\mathcal{L} \colon (\mathbb{Z}, \leq) \to \mathbf{Simp}$ be a persistence subcomplex of $\mathcal{K}$. For any $i \leq j$, the map of simplicial pairs $f^{i,j}\colon (\mathcal{K}_i, \mathcal{L}_i) \hookrightarrow (\mathcal{K}_j, \mathcal{L}_j)$ induces a natural morphism of relative chain complexes
\begin{equation*}
    \bar{f}^{i,j} \colon C_{\ast}(\mathcal{K}_i, \mathcal{L}_i) \to C_{\ast}(\mathcal{K}_j, \mathcal{L}_j).
\end{equation*}
By endowing these complexes with the quotient inner product introduced in Section \ref{section:local_Laplacian}, $\bar{f}^{i,j}$ becomes a DG-morphism of differential graded inner product spaces.

To define the persistent Laplacian in this relative setting, we follow the generalized construction by considering the persistence domain of $\bar{f}^{i,j}$ at degree $n+1$, defined as
\begin{equation*}
    \Theta^{i,j}_{n+1} := \{x \in C_{n+1}(\mathcal{K}_j, \mathcal{L}_j) \mid d^j_{n+1} x \in \mathrm{im}\, \bar{f}^{i,j} \}.
\end{equation*}
Here, $d^i_\ast$ denotes the relative boundary operator of $C_\ast(\mathcal{K}_i, \mathcal{L}_i)$, while $d^j_\ast$ refers to the corresponding operator for $C_\ast(\mathcal{K}_j, \mathcal{L}_j)$.
Let $\iota \colon \Theta^{i,j}_{n+1} \hookrightarrow C_{n+1}(\mathcal{K}_j, \mathcal{L}_j)$ be the canonical inclusion. The corresponding generalized pullback differential $\delta^{i,j}_{n+1} \colon \Theta^{i,j}_{n+1} \to C_n(\mathcal{K}_i, \mathcal{L}_i)$ is given by
\begin{equation*}
    \delta^{i,j}_{n+1} := (\bar{f}^{i,j}_{n})^\dagger d^j_{n+1} \iota.
\end{equation*}
The relative persistent Laplacian is then established as the generalized Laplacian associated with the morphism $\bar{f}^{i,j}$ in the following sense.

\begin{definition}
The \textit{$(i,j)$-persistent relative Laplacian} in dimension $n$, denoted by $\Delta^{i,j}_{n} \colon C_{n}(\mathcal{K}_i, \mathcal{L}_i) \to C_{n}(\mathcal{K}_i, \mathcal{L}_i)$, is the linear operator defined by
\begin{equation*}
    \Delta^{i,j}_{n} := \delta^{i,j}_{n+1} (\delta^{i,j}_{n+1})^* + (d^i_n)^* d^i_n + (\mathrm{id} - (\bar{f}^{i,j}_{n})^\dagger \bar{f}^{i,j}_{n}),
\end{equation*}
where $d^i_n$ is the relative boundary operator of $C_{\ast}(\mathcal{K}_i, \mathcal{L}_i)$, and $(\delta^{i,j}_{n+1})^*$ is the adjoint of the generalized pullback differential.
\end{definition}
In particular, when $n=0$, the relative boundary operator $d^i_0$ vanishes by definition, and the $(i,j)$-persistent relative Laplacian simplifies to
\begin{equation*}
    \Delta_0^{i,j} = \delta^{i,j}_{1} (\delta^{i,j}_{1})^* + (\mathrm{id} - (\bar{f}^{i,j}_{0})^\dagger \bar{f}^{i,j}_{0}).
\end{equation*}
Furthermore, in the case where $i=j$, the induced morphism $\bar{f}^{i,j}$ reduces to the identity. Consequently, the generalized pullback differential $\delta^{i,j}_{n+1}$ coincides with the standard relative boundary operator $d^i_{n+1}$. Under these conditions, the persistent operator recovers the combinatorial Laplacian for relative chain complexes
\begin{equation*}
    \Delta^{i,i}_{n} = d^{i}_{n+1} (d^{i}_{n+1})^* + (d^i_n)^* d^i_n,
\end{equation*}
which is exactly the standard (non-persistent) relative Laplacian $\Delta^{i}_{n} \colon C_{n}(\mathcal{K}_i, \mathcal{L}_i) \to C_{n}(\mathcal{K}_i, \mathcal{L}_i)$. 

\begin{definition}
The \textit{$(i,j)$-persistent relative harmonic space} is defined by
\begin{equation*}
  \mathcal{H}^{i,j}_n(\mathcal{K},\mathcal{L}) = \ker \Delta^{i,j}_{n},\quad n\geq 0 .
\end{equation*}
\end{definition}

\begin{theorem}\label{theorem:relative_isomorphism}
For any $i\leq j$, there is an isomorphism of linear spaces
\begin{equation*}
  \mathcal{H}^{i,j}_n(\mathcal{K},\mathcal{L}) = H_n^{i,j}(\mathcal{K}, \mathcal{L}),\quad n\geq 0.
\end{equation*}
\end{theorem}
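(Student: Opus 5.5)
The plan is to deduce the statement directly from Theorem~\ref{theorem:hodge_isomorphism} by checking that the relative setting is a special case of the abstract DG-morphism framework. Fix $i \le j$ and set $V = C_\ast(\mathcal{K}_i,\mathcal{L}_i)$ and $W = C_\ast(\mathcal{K}_j,\mathcal{L}_j)$, each with the quotient inner product from Section~\ref{section:local_Laplacian}. Their differentials are the relative boundary operators $d^i_\ast$ and $d^j_\ast$. Distinct degrees are orthogonal by construction, and both spaces are finite dimensional, so $V$ and $W$ are finite-dimensional DG-inner product spaces. The sign convention (degree $-1$ rather than $+1$) is immaterial for the abstract arguments.

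The first step is to verify that $\bar f = \bar f^{i,j}$ is a DG-morphism. It preserves degree, and it commutes with the differentials because $f^{i,j}$ is a simplicial map of pairs. The chain map $C_\ast(\mathcal{K}_i) \to C_\ast(\mathcal{K}_j)$ sends $C_\ast(\mathcal{L}_i)$ into $C_\ast(\mathcal{L}_j)$, so it descends to the quotients compatibly with $\pi_n$ and $\bar d_n$. Degenerate simplices map to $0$, as usual.

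The second step is to match the operators degree by degree. The persistence domain $\Theta^{i,j}_{n+1}$ is exactly the degree-$(n+1)$ part of $\Theta_{\bar f}$. The map $\delta^{i,j}_{n+1} = (\bar f_n)^\dagger d^j_{n+1}\iota$ is the degree-$n$ component of $\delta_{\bar f}$, because the pseudoinverse of a graded map is the direct sum of the componentwise pseudoinverses. Hence $\Delta^{i,j}_n$ is the restriction of $\Delta^{\bar f}$ to $V_n$. Each summand of $\Delta^{\bar f}$ preserves degree: $\delta\delta^*$ maps $V_n \to V_n$, so does $d^*d$, and so does $\mathrm{id} - \bar f^\dagger \bar f$. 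Therefore $\ker \Delta^{\bar f} = \bigoplus_n \ker \Delta^{i,j}_n$.

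Applying Theorem~\ref{theorem:hodge_isomorphism} gives the isomorphism $\Psi \colon \ker\Delta^{\bar f} \to H^{\bar f}$, $v \mapsto [\bar f(v)]$. This map is homogeneous of degree zero, so it restricts to isomorphisms $\ker \Delta^{i,j}_n \cong \im\big(H_n(\bar f)\big)$. The right-hand side is, by definition, $\im\big(\bar f^{i,j}_n \colon H_n(\mathcal{K}_i,\mathcal{L}_i) \to H_n(\mathcal{K}_j,\mathcal{L}_j)\big) = H^{i,j}_n(\mathcal{K},\mathcal{L})$. Here we use that the homology of the quotient DG space is precisely the relative homology.

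The main point needing care is this degree-wise restriction. In particular, the proof of Theorem~\ref{theorem:hodge_isomorphism} uses the orthogonal decomposition $Z(V) = \im(\delta_f|_{Z(V)}) \oplus (\ker\delta_f^*\cap Z(V))$. One should check that $\im \delta_f$ lies in $Z(V)$, at least modulo $\ker f$ (by the preceding lemma, $d_V\delta_f(x) \in \ker f$), and that this decomposition respects gradings. Rather than re-prove it, I would note that every operator involved is a direct sum over degrees. The full isomorphism is then graded, and its degree-$n$ component gives the claim. The equals sign in the statement should be read as this canonical isomorphism $v \mapsto [\bar f^{i,j}_n(v)]$.
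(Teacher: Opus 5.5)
Your argument is the same one-line reduction the paper uses: its proof simply cites Theorem~\ref{theorem:hodge_isomorphism}. Your bookkeeping is fine: $\bar f^{i,j}$ is a DG-morphism, everything splits degree-wise, and $\Psi$ is homogeneous.

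\textbf{The gap.} It is exactly the point you flagged and then set aside. When $\bar f=\bar f^{i,j}$ is not injective, the surjectivity half of Theorem~\ref{theorem:hodge_isomorphism} fails, so there is nothing valid to specialize.
\begin{itemize}
\item That proof decomposes $c'=\bar f^\dagger\bar f(c)$ inside $Z(V)$. But $d_V c'=-d_V(c-c')$ is only known to lie in $\ker\bar f$, and $\im\delta_{\bar f}$ need not consist of cycles.
\item More fundamentally, $\ker d_V\cap(\ker\bar f)^\perp$ can be too small to carry every class.
\item Only injectivity of $\Psi$ survives. If $\bar f(v)=d_Wx$, then $x\in\Theta_{\bar f}$ and $v=\bar f^\dagger\bar f v=\delta_{\bar f}x\in\im\delta_{\bar f}$, which forces $v=0$. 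This gives $\dim\mathcal{H}^{i,j}_n\le\dim H^{i,j}_n$ and nothing more.
\end{itemize}
Non-injectivity is not exotic here. It already occurs with inclusions whenever $\mathcal{L}_j\cap\mathcal{K}_i\neq\mathcal{L}_i$.

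\textbf{Counterexample.} In fact the statement is false as stated. Take $\mathcal{K}_i=\mathcal{K}_j=K$, the path with edges $[a,b],[b,c]$. Let $\mathcal{L}_i=\{a,c\}$ and $\mathcal{L}_j=\{a,b,c,[a,b]\}$, with all maps inclusions.
\begin{itemize}
\item $H_1(\mathcal{K}_i,\mathcal{L}_i)$ is spanned by the class of $[a,b]+[b,c]$. This maps to the nonzero class of $[b,c]$ in $H_1(\mathcal{K}_j,\mathcal{L}_j)=C_1(\mathcal{K}_j,\mathcal{L}_j)$, so $H^{i,j}_1\cong\mathbb{R}$.
\item On the other hand, $\delta^{i,j}_2=0$ and $\ker\bar f^{i,j}_1=\mathrm{span}([a,b])$. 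In the basis $[a,b],[b,c]$,
\[
\Delta^{i,j}_1=\begin{pmatrix}1&-1\\-1&1\end{pmatrix}+\begin{pmatrix}1&0\\0&0\end{pmatrix}=\begin{pmatrix}2&-1\\-1&1\end{pmatrix},
\]
which is invertible, so $\mathcal{H}^{i,j}_1=0$.
\end{itemize}
The absolute analogue: map a hollow square onto a hollow triangle by collapsing one edge.

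\textbf{What survives.} The statement holds under the extra hypothesis that $\bar f^{i,j}$ is injective. An example is inclusions with $\mathcal{L}_j\cap\mathcal{K}_i=\mathcal{L}_i$, which covers the local case by Lemma~\ref{lemma:isometric_embedding}. There the direct argument is short:
\begin{itemize}
\item $\im\delta^{i,j}_{n+1}=(\bar f^{i,j}_n)^{-1}(\im d^j_{n+1})$ lies in $\ker d^i_n$.
\item It is precisely the kernel of the map $\ker d^i_n\to H_n(\mathcal{K}_j,\mathcal{L}_j)$.
\item Hence $\ker\Delta^{i,j}_n=\ker d^i_n\cap(\im\delta^{i,j}_{n+1})^\perp$ maps isomorphically onto $H^{i,j}_n(\mathcal{K},\mathcal{L})$.
\end{itemize}
You need to add this hypothesis; the paper's one-line proof needs it too.
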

\begin{proof}
It is directly obtained from Theorem \ref{theorem:hodge_isomorphism}.
\end{proof}

The above result demonstrates that the harmonic space of the persistent Laplacian coincides with persistent homology, thereby establishing a connection between the geometry and topology of a space or dataset. This also provides an alternative approach for calculating persistent relative homology.

\subsubsection{Persistent local Laplacian}

To facilitate the algebraic treatment of persistent Laplacians, we restrict our study to filtrations of embedded simplicial complexes. We now consider the category $\mathbf{Simp}^{\hookrightarrow}$, whose morphisms are defined by inclusions. Such a restriction simplifies the induced chain maps to canonical monomorphisms, thereby providing a robust foundation for the persistent local Laplacian while covering the diverse scenarios encountered in practice.

Let $\mathcal{K} \colon (\mathbb{Z}, \leq) \to \mathbf{Simp}^{\hookrightarrow}$ be a persistence simplicial complex. For any vertex $v \in \bigcup_{i \in \mathbb{Z}} \mathcal{K}_i$, we construct an associated \textit{persistence vertex complex} $\mathbf{v} \colon (\mathbb{Z}, \leq) \to \mathbf{Simp}$, where its value at index $i$ is defined by
\begin{equation*}
\mathbf{v}_i = \begin{cases} 
\{v\} & \text{if } i \geq b, \\
\emptyset & \text{if } i < b.
\end{cases}
\end{equation*}
Here, $b = \min\{i \in \mathbb{Z} \mid v \in \mathcal{K}_i\}$ denotes the birth index of $v$ in the filtration $\mathcal{K}$. For all $j \geq i \geq b$, the transition morphism $f^{i,j}_{\mathbf{v}} \colon \mathbf{v}_i \to \mathbf{v}_j$ is the unique identity map, i.e., $f^{i,j}_{\mathbf{v}}(v) = v$.

Let $\mathcal{K}^{\mathbf{v}}_i = \mathcal{K}_i \setminus \st_{\mathcal{K}_i}(\mathbf{v}_i) = \{\sigma\in \mathcal{K}_i\mid v_i\notin \sigma\}$. We consider the relative chain complex $C_{\ast}(\mathcal{K}_i, \mathcal{K}^{\mathbf{v}}_i)$, which, by our previous construction, is equipped with a quotient inner product. For any $i \leq j$, the inclusion of simplicial pairs $(\mathcal{K}_i, \mathcal{K}^{\mathbf{v}}_i) \hookrightarrow (\mathcal{K}_j, \mathcal{K}^{\mathbf{v}}_j)$ induces a natural chain map
\begin{equation*}
    \bar{f}^{i,j} \colon C_{\ast}(\mathcal{K}_i, \mathcal{K}^{\mathbf{v}}_i) \to C_{\ast}(\mathcal{K}_j, \mathcal{K}^{\mathbf{v}}_j).
\end{equation*}
As established previously, $f^{i,j}$ is a DG-morphism between differential graded inner product spaces.

\begin{lemma}\label{lemma:isometric_embedding}
The induced chain map $\bar{f}^{i,j} \colon C_{\ast}(\mathcal{K}_i, \mathcal{K}^{\mathbf{v}}_i) \to C_{\ast}(\mathcal{K}_j, \mathcal{K}^{\mathbf{v}}_j)$ is an isometric embedding.
\end{lemma}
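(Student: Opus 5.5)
The plan is to identify each relative chain group with the span of the simplices containing the distinguished vertex, and then observe that the inclusion of filtrations maps this orthonormal basis injectively into the corresponding basis at the later index.

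First, fix $n$ and $i\le j$, and dispose of the degenerate cases. If $i<b$, then $\mathbf{v}_i=\emptyset$, so $\mathcal{K}^{\mathbf{v}}_i=\mathcal{K}_i$ and $C_n(\mathcal{K}_i,\mathcal{K}^{\mathbf{v}}_i)=0$. The zero map out of the zero space is trivially an isometric embedding, so we may assume $b\le i\le j$.

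Next, describe the quotient Hilbert structure explicitly. Since $C_n(\mathcal{K}^{\mathbf{v}}_i)$ is spanned by the $n$-simplices of $\mathcal{K}_i$ not containing $v$, and the simplices form an orthonormal basis, the orthogonal complement $C_n(\mathcal{K}^{\mathbf{v}}_i)^\perp$ is exactly the span of the $n$-simplices $\tau\in\mathcal{K}_i$ with $v\in\tau$. By construction of the quotient inner product, $\theta_n$ is an isometry. Hence the classes $[\tau]$, for $\tau\ni v$, form an orthonormal basis of $C_n(\mathcal{K}_i,\mathcal{K}^{\mathbf{v}}_i)$. The same description holds at index $j$. (This is the same computation used in the proof of Theorem~\ref{theorem:chain_isomorphism}.)

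Then compute $\bar f^{i,j}$ on this basis. Because morphisms in $\mathbf{Simp}^{\hookrightarrow}$ are inclusions and $v$ persists as the same vertex, an $n$-simplex $\tau\ni v$ of $\mathcal{K}_i$ is also an $n$-simplex of $\mathcal{K}_j$ containing $v$, with the same orientation. Therefore $\bar f^{i,j}[\tau]=[\tau]$, and this class is a basis element at index $j$. Distinct simplices go to distinct basis elements. So $\bar f^{i,j}_n$ sends an orthonormal basis injectively onto a subset of an orthonormal basis, which makes it a linear isometric embedding: $\langle \bar f x,\bar f y\rangle=\langle x,y\rangle$, and hence $(\bar f^{i,j})^*\bar f^{i,j}=\mathrm{id}$.

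The only point needing care, and the likely obstacle, is well-definedness and representatives. One must check that the image of the canonical representative in $C_n(\mathcal{K}^{\mathbf{v}}_i)^\perp$ is again the canonical representative in $C_n(\mathcal{K}^{\mathbf{v}}_j)^\perp$, rather than merely some representative of the class. This follows because the inclusion sends simplices containing $v$ to simplices containing $v$, and sends $\mathcal{K}^{\mathbf{v}}_i$ into $\mathcal{K}^{\mathbf{v}}_j$ (as in the earlier proposition on $\mathcal{K}^{\mathbf{v}}$). With this in hand, the inner products computed via representatives coincide, and the proof is complete. As a consequence, Proposition~\ref{proposition:properties}(iv) gives $(\bar f^{i,j})^\dagger=(\bar f^{i,j})^*$.
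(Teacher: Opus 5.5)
Your proof is correct and follows essentially the same route as the paper: with the quotient inner product, the classes of simplices containing $v$ form an orthonormal basis of $C_n(\mathcal{K}_i,\mathcal{K}^{\mathbf{v}}_i)$, and the inclusion sends this basis injectively into the corresponding orthonormal basis at index $j$. Your explicit handling of the degenerate case $i<b$ and of the canonical representatives in $C_n(\mathcal{K}^{\mathbf{v}}_i)^\perp$ just spells out details that the paper leaves implicit.
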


\begin{proof}
    Consider the basis of $C_{\ast}(\mathcal{K}_i, \mathcal{K}^{\mathbf{v}}_i)$ consisting of simplices in $\mathcal{K}_i \setminus \mathcal{K}^{\mathbf{v}}_i$. By construction, these are precisely the simplices $\sigma \in \mathcal{K}_i$ that contain the vertex $v$. Since the transition morphism in $\mathbf{Simp}^{\hookrightarrow}$ is an inclusion, any such simplex $\sigma$ is mapped to itself in $\mathcal{K}_j$, and it still contains $v$. 

    Furthermore, the inclusion of simplicial pairs $(\mathcal{K}_i, \mathcal{K}^{\mathbf{v}}_i) \hookrightarrow (\mathcal{K}_j, \mathcal{K}^{\mathbf{v}}_j)$ ensures that the set of simplices generating $C_{\ast}(\mathcal{K}_i, \mathcal{K}^{\mathbf{v}}_i)$ is a subset of those generating $C_{\ast}(\mathcal{K}_j, \mathcal{K}^{\mathbf{v}}_j)$. Since the inner product on these relative chain complexes is defined such that the simplices form an orthonormal basis, the induced map $\bar{f}^{i,j}$ maps a subset of an orthonormal basis in the codomain to itself. Consequently, $\bar{f}^{i,j}$ preserves the inner product on the entire space by linearity. It follows that $\bar{f}^{i,j}$ is an isometric embedding.
\end{proof}

Lemma \ref{lemma:isometric_embedding} implies that $(\bar{f}^{i,j})^\dagger \bar{f}^{i,j} = \mathrm{id}$, which ensures that the term $\mathrm{id} - (\bar{f}^{i,j})^\dagger \bar{f}^{i,j}$ in the generalized persistent Laplacian vanishes. Consequently, we obtain the following simplified definition for the local setting.

\begin{definition}
For any $i \leq j$, the \textit{$(i,j)$-persistent local Laplacian} in dimension $n$, denoted by $\Delta_n^{i,j} \colon C_n(\mathcal{K}_i, \mathcal{K}^{\mathbf{v}}_i) \to C_n(\mathcal{K}_i, \mathcal{K}^{\mathbf{v}}_i)$, is defined as
\begin{equation*}
    \Delta^{i,j}_{n} := \delta^{i,j}_{n+1} (\delta^{i,j}_{n+1})^* + (d^i_n)^* d^i_n,
\end{equation*}
where $d^i_n$ is the boundary operator of $C_{\ast}(\mathcal{K}_i, \mathcal{K}^{\mathbf{v}}_i)$, and $(\delta^{i,j}_{n+1})^*$ is the adjoint of the generalized pullback differential.
\end{definition}

\begin{definition}
For any $i \leq j$, the \textit{$(i,j)$-persistent local harmonic space} is defined by
\begin{equation*}
  \mathcal{H}^{i,j}_n(\mathcal{K},\mathbf{v}) = \ker \Delta^{i,j}_{n},\quad n\geq 0.
\end{equation*}
\end{definition}

The persistent local Laplacian extends the static local Laplacian into a multi-scale setting, enabling the spectral analysis of localized topological features across the filtration.

\begin{theorem}
For any $i\leq j$, there is an isomorphism of linear spaces
\begin{equation*}
  \mathcal{H}^{i,j}_n(\mathcal{K},\mathbf{v}) = H_n^{i,j}(\mathcal{K}, \mathbf{v}),\quad n\geq 0.
\end{equation*}
\end{theorem}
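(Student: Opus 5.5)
The plan is to obtain the statement as a direct specialization of Theorem~\ref{theorem:hodge_isomorphism}, equivalently of Theorem~\ref{theorem:relative_isomorphism}, to the persistence pair $(\mathcal{K}, \mathcal{K}^{\mathbf{v}})$.

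\textbf{Step 1: set up the pair.} By the earlier proposition, $\mathcal{K}^{\mathbf{v}}$ is a persistence subcomplex of $\mathcal{K}$. Hence $(\mathcal{K}, \mathcal{K}^{\mathbf{v}})$ is a persistence relative complex. For each $i \leq j$, the map $\bar{f}^{i,j}\colon C_\ast(\mathcal{K}_i,\mathcal{K}^{\mathbf{v}}_i)\to C_\ast(\mathcal{K}_j,\mathcal{K}^{\mathbf{v}}_j)$ is a DG-morphism of finite-dimensional DG-inner product spaces, using the quotient inner products.

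\textbf{Step 2: identify the two Laplacians.} By Lemma~\ref{lemma:isometric_embedding}, $\bar{f}^{i,j}$ is an isometric embedding. Proposition~\ref{proposition:properties}(iv) then gives $(\bar f^{i,j})^\dagger=(\bar f^{i,j})^\ast$ and $(\bar f^{i,j}_n)^\dagger \bar f^{i,j}_n=\mathrm{id}$. So the correction term $\mathrm{id}-(\bar f^{i,j}_n)^\dagger\bar f^{i,j}_n$ vanishes, and the persistent local Laplacian $\Delta^{i,j}_n$ coincides degreewise with the generalized Laplacian $\Delta^{\bar f^{i,j}}$ restricted to degree $n$.

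For this identification to be exact, I must check that the generalized construction, applied to the whole graded map, splits into homogeneous degrees. The persistence domain is $\Theta_{\bar f}=\bigoplus_n \Theta^{i,j}_{n}$, and $\delta_{\bar f}$ has degree $-1$ as the differential does. Consequently $\delta_{\bar f}\delta_{\bar f}^\ast$ and $d^\ast d$ preserve degree. Their degree-$n$ components are exactly $\delta^{i,j}_{n+1}(\delta^{i,j}_{n+1})^\ast$ and $(d^i_n)^\ast d^i_n$, because the graded pieces are mutually orthogonal.

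\textbf{Step 3: apply the Hodge isomorphism.} Theorem~\ref{theorem:hodge_isomorphism} says that $v\mapsto[\bar f^{i,j}(v)]$ is an isomorphism from $\ker\Delta^{\bar f^{i,j}}$ onto $\operatorname{im} H(\bar f^{i,j})$. Both sides are graded, and the map preserves degree. Restricting to degree $n$ therefore gives an isomorphism
\[
\mathcal{H}^{i,j}_n(\mathcal{K},\mathbf{v}) \;\longrightarrow\; \operatorname{im}\bigl(H_n(\mathcal{K}_i,\mathcal{K}^{\mathbf{v}}_i)\to H_n(\mathcal{K}_j,\mathcal{K}^{\mathbf{v}}_j)\bigr).
\]
By the definition of persistent local homology, the right-hand side is $H^{i,j}_n(\mathcal{K},\mathbf{v})$. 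This is precisely the statement of Theorem~\ref{theorem:relative_isomorphism} for $\mathcal{L}=\mathcal{K}^{\mathbf{v}}$.

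\textbf{Step 4: edge cases.} If $i<b$, then $\mathcal{K}^{\mathbf{v}}_i=\mathcal{K}_i$. The relative chain group at $i$ is zero, so both sides vanish. The case $n=0$ needs no special treatment, since $d^i_0=0$.

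The main obstacle is the bookkeeping in Step 2. Theorem~\ref{theorem:hodge_isomorphism} is phrased for the total graded space, while the local Laplacian is defined degree by degree. I therefore need to confirm that the adjoint $\delta_{\bar f}^\ast$ and the decomposition of $Z(V)$ used in the surjectivity argument respect the grading. This follows from the orthogonality $V_n\perp V_m$ built into the definition of DG-inner product spaces.
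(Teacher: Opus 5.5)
Your proposal is correct and follows the paper's route. The paper proves this in one line as a direct consequence of Theorem~\ref{theorem:relative_isomorphism}, which is itself a specialization of Theorem~\ref{theorem:hodge_isomorphism}, applied to the pair $(\mathcal{K},\mathcal{K}^{\mathbf{v}})$. Your extra checks are exactly the bookkeeping the paper leaves implicit: that Lemma~\ref{lemma:isometric_embedding} removes the correction term so the local and relative Laplacians coincide, that the construction splits degreewise, and that the case $i<b$ is trivial.
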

\begin{proof}
This is a direct consequence of Theorem \ref{theorem:relative_isomorphism}.
\end{proof}

Following the definition of the persistent local Laplacian, we introduce its spectral properties, which characterize the evolution of local topological features.

\begin{definition}
For any $i \leq j$, the \textit{persistent local spectrum} in dimension $n$, denoted by $\mathbf{Spec}(\Delta_n^{i,j})$, is the multiset of eigenvalues of the operator $\Delta_n^{i,j}$.
\end{definition}

This spectral formulation offers a significant advantage over purely homological persistence: the non-zero eigenvalues of $\Delta_n^{i,j}$ provide a ``spectral signature'' of the local geometry. As $j-i$ increases, the evolution of these eigenvalues tracks the structural stability of the neighborhood around $v$, allowing for the differentiation between transient geometric noise and robust local singularities.

\subsubsection{Computing persistent local Laplacian using link complex}

\begin{definition}
Let $\mathcal{K} \colon (\mathbb{Z}, \leq) \to \mathbf{Simp}^{\hookrightarrow}$ be a persistence simplicial complex. For any vertex $v \in \bigcup_{i \in \mathbb{Z}} \mathcal{K}_i$, let $b = \min\{i \in \mathbb{Z} \mid v \in \mathcal{K}_i\}$. The \textit{persistent link complex} of $v$ in $\mathcal{K}$, denoted by $\mathrm{Lk}_{\mathcal{K}}(v) \colon (\mathbb{Z}, \leq) \to \mathbf{Simp}^{\hookrightarrow}$, is defined as follows:
\begin{enumerate}[label=$(\roman*)$]
    \item For $i \in \mathbb{Z}$, the link complex at index $i$ is
    \begin{equation*}
    \mathrm{Lk}_{\mathcal{K}, i}(v) := 
    \begin{cases} 
    \mathrm{Lk}_{\mathcal{K}_i}(v) & \text{if } i \geq b, \\
    \emptyset & \text{if } i < b.
    \end{cases}
    \end{equation*}
    \item For $i \leq j$, the transition morphism $g^{i,j} \colon \mathrm{Lk}_{\mathcal{K}, i}(v) \to \mathrm{Lk}_{\mathcal{K}, j}(v)$ is the restriction of the inclusion $\mathcal{K}_i \hookrightarrow \mathcal{K}_j$ to the corresponding link.
\end{enumerate}
\end{definition}

By virtue of the chain isometry $\phi$ established in Theorem \ref{theorem:chain_isomorphism}, the study of the persistent local Laplacian $\Delta_n^{i,j}$ can be formally reduced to the analysis of the \textit{persistent link complex} $\mathrm{Lk}_{\mathcal{K}}(v)$. 

Specifically, let $\Lk_i$ and $\Lk_j$ be the link complexes at indices $i$ and $j$ from the persistent link complex filtration. The chain isomorphism $\phi$ induces the following commutative diagram, extending the conjugacy in Theorem \ref{theorem:laplacian_conjugacy} to the persistent setting
\[
\xymatrix{
C_n(\mathcal{K}_i, \mathcal{K}_i^{\mathbf{v}}) \ar[rr]^-{\bar{f}^{i,j}_n} \ar[d]_{\phi}^{\cong} && C_n(\mathcal{K}_j, \mathcal{K}_j^{\mathbf{v}}) \ar[d]^{\phi}_{\cong} \\
C_{n-1}(\Lk_i) \ar[rr]^-{g^{i,j}_{n-1}} && C_{n-1}(\Lk_j),
}
\]
where $g^{i,j}_{n-1}$ is the transition morphism of the persistent link complex. Since $\phi$ is an isometry, it preserves the adjoint structure of the pullback differential, satisfying $(\delta^{i,j}_n)^* = \phi^{-1} \circ (\delta^{\Lk, i,j}_{n})^* \circ \phi$, where $\delta^{\Lk, i,j}_{n}$ is the generalized pullback differential acting on the link filtration.

Consequently, we obtain the following result for computing local persistence via links.

\begin{theorem}\label{theorem:persistence_link}
The $(i,j)$-persistent local Laplacian $\Delta_n^{i,j}$ is unitarily equivalent to the $(i,j)$-persistent Laplacian of the persistent link complex $\mathrm{Lk}_{\mathcal{K}}(v)$ at dimension $n-1$. Specifically,
\begin{equation*}
    \Delta_n^{i,j} = \phi^{-1} \circ \Delta_{n-1}^{\Lk,i,j} \circ \phi,
\end{equation*}
where $\Delta_{n-1}^{\Lk,i,j} \colon C_{n-1}(\Lk_i) \to C_{n-1}(\Lk_i)$ is defined as
\begin{equation*}
    \Delta_{n-1}^{\Lk,i,j} := \delta^{\Lk, i,j}_{n} (\delta^{\Lk, i,j}_{n})^* + (d^{\Lk, i}_{n-1})^* d^{\Lk, i}_{n-1}.
\end{equation*}
\end{theorem}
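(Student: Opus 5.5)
The plan is to show that the three ingredients of $\Delta_n^{i,j}$ — the persistence domain, the pullback differential, and the relative boundary — are each carried by $\phi$ onto their counterparts on the link filtration. Conjugation then commutes with sums, products and adjoints, so the Laplacian identity follows term by term, as in Theorem~\ref{theorem:laplacian_conjugacy}. Write $\phi_i\colon C_\ast(\mathcal{K}_i,\mathcal{K}_i^{\mathbf v})\to C_{\ast-1}(\Lk_i)$ and $\phi_j$ for the chain isometries of Theorem~\ref{theorem:chain_isomorphism} at the two indices. The case $i<b$ is trivial, since both sides act on zero spaces, so assume $i\ge b$.

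\textbf{Step 1: naturality.} I will check that $g^{i,j}_{n-1}\circ\phi_i=\phi_j\circ\bar f^{i,j}_n$. This holds on basis elements: a simplex $[v,v_1,\dots,v_n]$ of $\mathcal{K}_i$ is sent by the inclusion to the same simplex in $\mathcal{K}_j$, and both routes give $(-1)^n[v_1,\dots,v_n]$. By Lemma~\ref{lemma:isometric_embedding}, $\bar f^{i,j}$ is an isometric embedding, and likewise $g^{i,j}$ is the inclusion of a subset of an orthonormal basis. By Proposition~\ref{proposition:properties}(iv), both pseudoinverses are therefore adjoints, so $(\bar f^{i,j})^\dagger=(\bar f^{i,j})^*$ and $(g^{i,j})^\dagger=(g^{i,j})^*$. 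Since $\phi_i$ and $\phi_j$ are unitary, taking adjoints of the naturality square gives
\[
\phi_i\circ(\bar f^{i,j}_n)^\dagger=(g^{i,j}_{n-1})^\dagger\circ\phi_j .
\]

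\textbf{Step 2: domains and pullback differentials.} I will show that $\phi_j$ maps $\Theta^{i,j}_{n+1}$ bijectively onto the link persistence domain
\[
\Theta^{\Lk,i,j}_{n}=\{y\in C_n(\Lk_j)\mid d^{\Lk,j}_n y\in\im g^{i,j}_{n-1}\}.
\]
This follows because $\phi_j d^j_{n+1}=d^{\Lk,j}_n\phi_j$ and, by Step 1, $\phi_j(\im\bar f^{i,j}_n)=\im g^{i,j}_{n-1}$. As a restriction of an isometry, $\phi_j|_\Theta$ is itself an isometry onto $\Theta^{\Lk,i,j}_{n}$. Combining the chain-map property with the pseudoinverse intertwining from Step 1 then yields
\[
\phi_i\circ\delta^{i,j}_{n+1}=\delta^{\Lk,i,j}_{n}\circ\phi_j|_{\Theta}.
\]
Taking adjoints, with $\phi_j|_\Theta$ unitary between the two domains, gives $(\delta^{i,j}_{n+1})^*=(\phi_j|_\Theta)^{-1}(\delta^{\Lk,i,j}_n)^*\phi_i$. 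Consequently
\[
\delta^{i,j}_{n+1}(\delta^{i,j}_{n+1})^*=\phi_i^{-1}\,\delta^{\Lk,i,j}_n(\delta^{\Lk,i,j}_n)^*\,\phi_i .
\]

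\textbf{Step 3: the down term.} The term $(d^i_n)^*d^i_n$ is handled exactly as in the proof of Theorem~\ref{theorem:laplacian_conjugacy}: $\phi_i$ is a chain isometry, so it intertwines $d^i_n$ with $d^{\Lk,i}_{n-1}$ and their adjoints. Adding the two conjugated terms gives the claimed identity $\Delta_n^{i,j}=\phi^{-1}\circ\Delta_{n-1}^{\Lk,i,j}\circ\phi$.

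The main obstacle is Step 2. The adjoint of $\delta^{i,j}_{n+1}$ is taken with respect to the subspace $\Theta^{i,j}_{n+1}$, so I must make sure $\phi_j$ restricts to a unitary between the two persistence domains; the pseudoinverse intertwining of Step 1 is what makes this work. A further point is the edge case $n=0$, where $C_{-1}(\Lk)$ must be read as the augmented (reduced) degree $\mathbb{R}\cdot\emptyset$, which corresponds to $[v]$. For $n=0$ I would therefore either exclude the case, as the statement implicitly does for $n\ge1$, or adopt the reduced convention, under which the same argument applies verbatim.
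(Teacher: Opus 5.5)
Your proof is correct and follows the paper's route of conjugating each summand of $\Delta_n^{i,j}$ by the chain isometry $\phi$, but you carry it out more carefully: you distinguish $\phi_i$ from $\phi_j$, and you check that $\phi_j$ restricts to a unitary $\Theta^{i,j}_{n+1}\to\Theta^{\Lk,i,j}_{n}$ intertwining the pseudoinverses, which the paper's one-line identity $\delta^{i,j}_{n+1}=\phi^{-1}\circ\delta^{\Lk,i,j}_{n}\circ\phi$ takes for granted. One correction to your closing remark: the augmented convention $C_{-1}(\Lk_i)=\mathbb{R}\cdot\emptyset$ is needed already at $n=1$, not only at $n=0$, because Step 3 then uses $\phi_i$ as an isometry $C_0(\mathcal{K}_i,\mathcal{K}_i^{\mathbf v})\to C_{-1}(\Lk_i)$; indeed $(d^i_1)^*d^i_1$ is the all-ones matrix in the basis $\{[v,w]\}$, whereas $(d^{\Lk,i}_0)^*d^{\Lk,i}_0=0$ in the unreduced link complex.
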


\begin{proof}
Recall that $\Delta_n^{i,j} = \delta^{i,j}_{n+1}(\delta^{i,j}_{n+1})^* + (d^i_n)^* d^i_n$. By Theorem~\ref{theorem:laplacian_conjugacy}, the relative boundary operator $d^i_n$ is conjugate to the link boundary operator $d^{\Lk, i}_{n-1}$ via $\phi$, i.e., $d^i_n = \phi^{-1} \circ d^{\Lk, i}_{n-1} \circ \phi$. 

Regarding the persistent components, the commutativity of the diagram implies $\bar{f}^{i,j}_n = \phi^{-1} \circ g^{i,j}_{n-1} \circ \phi$. Since $\phi$ is an isometry, its adjoint coincides with its inverse (i.e., $\phi^* = \phi^{-1}$), which yields the following identities
\begin{equation*}
    \delta^{i,j}_{n+1} = \phi^{-1} \circ \delta^{\Lk, i,j}_{n} \circ \phi \quad \text{and} \quad (\delta^{i,j}_{n+1})^* = \phi^{-1} \circ (\delta^{\Lk, i,j}_{n})^* \circ \phi.
\end{equation*}
Substituting these conjugacy relations into the definition of $\Delta_n^{i,j}$, we have
\begin{equation*}
    \Delta_n^{i,j} = \phi^{-1} \circ \Delta_{n-1}^{\Lk,i,j} \circ \phi.
\end{equation*}
The unitary equivalence follows directly from the fact that $\phi$ is an isometric isomorphism.
\end{proof}

Theorem \ref{theorem:persistence_link} establishes that the persistent local spectrum coincides with the persistent spectrum of the persistent link complex. This result reduces the computation of a relative persistent operator to a classical persistent Laplacian, thereby permitting the direct application of efficient algorithms for combinatorial persistent Laplacians.

\section{Persistent local Laplacian on datasets}\label{section:on_datasets}

The persistent local Laplacian can be naturally applied to discrete datasets, most notably point clouds and graph data. In this section, we detail the construction and computational methodology for persistent local Laplacians in these practical settings.

\subsection{Local Laplacian on point sets}

Let $X$ be a finite point set embedded in a metric space $(M, d)$. To analyze the local topological evolution around a specific point $x \in X$, we utilize the Vietoris-Rips filtration to construct a sequence of simplicial complexes.

\subsubsection{Vietoris-Rips filtration}

The \textit{Vietoris-Rips complex} at scale $r \geq 0$, denoted by $\mathcal{VR}(X, r)$, is defined as the simplicial complex whose $k$-simplices are formed by subsets of $k+1$ points in $X$ with a diameter at most $r$. By considering a strictly increasing sequence of parameters $0 = r_0 < r_1 < \dots < r_m$, we obtain a sequence of nested simplicial complexes
\begin{equation*}
    \mathcal{VR}(X, r_0) \hookrightarrow \mathcal{VR}(X, r_1) \hookrightarrow \dots \hookrightarrow \mathcal{VR}(X, r_m).
\end{equation*}
This sequence defines a persistent simplicial complex $\mathcal{VR}\colon (\mathbb{Z}, \leq) \to \mathbf{Simp}^{\hookrightarrow}$ in the category of simplicial complexes with inclusion morphisms.

\subsubsection{Persistent link complex on point sets}

For any fixed point $v \in X$, the local topological structure at $v$ is characterized by the relative chain complexes $C_{\ast}(\mathcal{K}_i, \mathcal{K}_i^{v})$, where $\mathcal{K}_i = \mathcal{VR}(X, r_i)$ denotes the Vietoris-Rips complex at scale $r_i$, and $\mathcal{K}_i^{v} = \{ \sigma \in \mathcal{K}_i \mid v \notin \sigma \}$ is the subcomplex consisting of all simplices not containing the vertex $v$.

To facilitate efficient computation, we invoke the chain isometry $\phi$ from Theorem~\ref{theorem:chain_isomorphism} and transition to the persistent link complex $\mathrm{Lk}_{\mathcal{VR}}(v) \colon (\mathbb{Z}, \leq) \to \mathbf{Simp}^{\hookrightarrow}$ of the Vietoris-Rips filtration. For each scale $r_i$, the link of $v$ is explicitly given by
\begin{equation*}
    \Lk_{\mathcal{K}, i}(v) = \{ \sigma \in \mathcal{VR}(X, r_i) \mid v \notin \sigma \text{ and } \sigma \cup \{v\} \in \mathcal{VR}(X, r_i) \}.
\end{equation*}
Let $N_i(v) = \{ u \in X \setminus \{v\} \mid d(u, v) \leq r_i \}$ be the neighborhood of $v$ at scale $r_i$. The correspondence between the local link and the neighborhood's geometry is formalized by the following proposition.

\begin{proposition}\label{prop:vr_link_equality}
For any $v\in X$, we have
\begin{equation*}
    \mathrm{Lk}_{\mathcal{VR}}(v) = \mathcal{VR}(N_{\bullet}(v), r_{\bullet})\colon (\mathbb{Z}, \leq) \to \mathbf{Simp}^{\hookrightarrow}.
\end{equation*}
Here, $\mathcal{VR}(N_{i}(v), r_{i})$ denotes the Vietoris-Rips complex of $N_{i}(v)$ at scale $r_i$.
\end{proposition}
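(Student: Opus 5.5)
The plan is to prove the equality objectwise, comparing the two complexes simplex by simplex at each index $i$. I will then check that the transition morphisms agree, which shows the two functors $(\mathbb{Z},\leq)\to\mathbf{Simp}^{\hookrightarrow}$ coincide.

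\textbf{Objectwise equality.} Fix $i$ and let $\sigma$ be a nonempty finite subset of $X$. By the explicit description of $\Lk_{\mathcal{K},i}(v)$ above, $\sigma\in\Lk_{\mathcal{K},i}(v)$ if and only if $v\notin\sigma$ and $\sigma\cup\{v\}\in\mathcal{VR}(X,r_i)$. The latter condition says $\operatorname{diam}(\sigma\cup\{v\})\leq r_i$. Since the diameter is a maximum of pairwise distances, this splits into two conditions:
\begin{equation*}
\operatorname{diam}(\sigma)\leq r_i \quad\text{and}\quad d(u,v)\leq r_i \ \text{ for all } u\in\sigma.
\end{equation*}
The second condition, together with $v\notin\sigma$, says exactly that $\sigma\subseteq N_i(v)$. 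The first says $\sigma$ is a simplex of the Rips complex on any vertex set containing it. Hence
\begin{equation*}
\sigma\in\Lk_{\mathcal{K},i}(v)\iff \sigma\subseteq N_i(v) \text{ and } \operatorname{diam}(\sigma)\leq r_i \iff \sigma\in\mathcal{VR}(N_i(v),r_i).
\end{equation*}
This pairwise splitting of the diameter condition is the only real content; it is where the flag nature of Vietoris--Rips complexes enters.

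\textbf{Indices and morphisms.} Since every point of $X$ is a vertex of $\mathcal{VR}(X,r_0)$, the birth index of $v$ is the initial index. Hence the case $\mathrm{Lk}_{\mathcal{K},i}(v)=\emptyset$ for $i<b$ involves no indices of the filtration and imposes nothing. Also, $N_i(v)$ may be empty, for instance at $r_0=0$ with distinct points. In that case both sides are the empty complex, so the equality still holds.

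For $i\leq j$, we have $r_i\leq r_j$, hence $N_i(v)\subseteq N_j(v)$ and $\mathcal{VR}(N_i(v),r_i)\subseteq\mathcal{VR}(N_j(v),r_j)$. The transition morphism of $\mathcal{VR}(N_\bullet(v),r_\bullet)$ is this inclusion. The transition morphism $g^{i,j}$ of the persistent link is the restriction of $\mathcal{K}_i\hookrightarrow\mathcal{K}_j$, which is also the inclusion of the same sets of simplices. So the morphisms agree, and functoriality is automatic because all maps are inclusions.

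I expect no genuine obstacle. The points requiring care are the bookkeeping conventions just handled: the empty link at a birth index, possibly empty neighborhoods, and confirming that the index set and $b$ match the definition of the persistent link complex.
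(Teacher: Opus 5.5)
Your proposal is correct and follows essentially the same route as the paper. Both compare the simplex sets objectwise, using the pairwise-distance (diameter-splitting) characterization of $\sigma\cup\{v\}\in\mathcal{VR}(X,r_i)$, and then observe that both transition maps are the same inclusions; your extra bookkeeping on empty neighborhoods and the birth index is consistent with the paper's definitions and only makes the argument more careful.
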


\begin{proof}
It suffices to show that for any index $i$, the sets of simplices $\Lk_{\mathcal{K}, i}(v)$ and $\mathcal{VR}(N_i(v), r_i)$ are identical. By definition, $\sigma \in \Lk_{\mathcal{K}, i}(v)$ if and only if $v \notin \sigma$ and $\sigma \cup \{v\} \in \mathcal{VR}(X, r_i)$. In the Vietoris-Rips construction, the latter condition holds if and only if all vertices in $\sigma \cup \{v\}$ are pairwise within distance $r_i$. This requirement is equivalent to $\sigma \in \mathcal{VR}(N_i(v), r_i)$.

Since the transition morphisms in both filtrations are induced by the same inclusion maps on these identical simplex sets, the persistence simplicial complexes are equal.
\end{proof}

\subsubsection{Computational local spectrum via neighborhood filtrations}

Based on the identity established in Proposition~\ref{prop:vr_link_equality}, the calculation of the persistent local Laplacian can be reduced to the spectral analysis of a standard persistent Laplacian on the neighborhood filtration.

Let $X$ be a finite point set in a metric space $(M, d)$, and let $0 = r_0 < r_1 < \dots < r_m$ be an increasing sequence of filtration scales. For any fixed $v \in X$, we obtain a filtration of neighborhoods
\begin{equation*}
    N_{0}(v) \hookrightarrow N_{1}(v) \hookrightarrow \dots \hookrightarrow N_{m}(v),
\end{equation*}
where $N_k(v) = \{ u \in X \setminus \{v\} \mid d(u, v) \leq r_k \}$. This induces a filtration of Vietoris-Rips complexes
\begin{equation*}
    \mathcal{VR}(N_{0}(v), r_{0}) \hookrightarrow \mathcal{VR}(N_{1}(v), r_{1}) \hookrightarrow \dots \hookrightarrow \mathcal{VR}(N_{m}(v), r_{m}).
\end{equation*}

Let $C^i_{\ast} = C_{\ast}(\mathcal{VR}(N_i(v), r_i))$ denote the associated chain complex at scale $r_i$. For any pair of indices $i \leq j$, the inclusion of simplicial complexes induces an injective chain map $f^{i,j}_{\ast} \colon C^i_{\ast} \hookrightarrow C^j_{\ast}$. In this context, the subspace $\Theta^{i,j}_{n}$ defined in Section~\ref{section:generalized_Laplacian} can be reduced to the form
\begin{equation*}
    \Theta^{i,j}_{n} = \{ x \in C_{n}^j \mid d^j_{n} x \in \mathrm{im}(f^{i,j}_{n-1}) \cong C_{n-1}^i \}.
\end{equation*}
The persistent structure is then characterized by the following sequence of operators
\begin{equation*}
    \xymatrix{
        \Theta^{i,j}_{n} \ar@{->}[r]^{\delta^{i,j}_{n}} & C_{n-1}^{i} \ar@{->}[r]^{d^i_{n-1}} & C_{n-2}^{i},
    }
\end{equation*}
where the persistent boundary operator $\delta^{i,j}_{n}$ is defined as the composition of the following maps
\begin{equation*}
    \xymatrix{
        \Theta^{i,j}_{n} \ar@{^{(}->}[r]^{\iota} & C_{n}^{j} \ar@{->}[r]^{d^j_{n}} & C_{n-1}^{j} \ar@{->}[r]^{(f^{i,j}_{n-1})^\ast} & C_{n-1}^{i}.
    }
\end{equation*}
The $(i,j)$-persistent Laplacian of the link complex, $\Delta_{n-1}^{i,j} \colon C_{n-1}^{i} \to C_{n-1}^{i}$, is then given by
\begin{equation*}
    \Delta_{n-1}^{i,j} = \delta^{i,j}_{n}(\delta^{i,j}_{n})^{\ast} + (d^i_{n-1})^{\ast}d^i_{n-1}.
\end{equation*}

Crucially, the $(i,j)$-persistent local Laplacian of $X$ at $v$, $\Delta_n^{v,i,j}$, is unitarily equivalent to the persistent Laplacian $\Delta_{n-1}^{i,j}$ computed over the neighborhood filtration. Consequently, we have the following spectral identity
\begin{equation*}
    \mathbf{Spec}(\Delta_n^{v,i,j}) = \mathbf{Spec}(\Delta_{n-1}^{i,j}), \quad \text{for } n \geq 1.
\end{equation*}
This spectral equivalence ensures that the multi-scale local topological signatures of $v$ can be efficiently extracted by analyzing the neighbor-induced sub-complexes.
pl
The localization allows the persistent local Laplacian to act as a high-resolution, multi-scale descriptor for large-scale datasets, where computing global persistence would be computationally prohibitive.

\subsubsection{The 0-th persistent local Laplacian}

Proposition \ref{proposition:zero_Laplacian} shows that the local Laplacian coincides with the degree of the vertex. We now proceed to study the $0$-th local Laplacian in the persistent setting.

\begin{proposition}
The $0$-th persistent local Laplacian $\Delta_0^{v,i,j} \colon C_0(\mathcal{K}_i, \mathcal{K}_i^v) \to C_0(\mathcal{K}_i, \mathcal{K}_i^v)$ is given by
\begin{equation*}
    \Delta_0^{v,i,j} = \deg_{r_j}(v),
\end{equation*}
where $\deg_{r_j}(v)$ denotes the number of neighbors of $v$ within distance $r_j$.
\end{proposition}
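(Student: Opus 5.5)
The plan is to compute $\Delta_0^{v,i,j}$ directly, mirroring the proof of Proposition~\ref{proposition:zero_Laplacian}, but with the persistent pullback differential $\delta^{i,j}_1$ in place of $\bar d_1$. Throughout assume $i\ge b$, so that $v\in\mathcal{K}_i$; otherwise the space is zero and there is nothing to prove. Since $d^i_0=0$ and, by Lemma~\ref{lemma:isometric_embedding}, $\bar f^{i,j}$ is an isometric embedding, the definition reduces to
\[
\Delta_0^{v,i,j}=\delta^{i,j}_1(\delta^{i,j}_1)^*,
\qquad
\delta^{i,j}_1=(\bar f^{i,j}_0)^* d^j_1\iota .
\]
Moreover $C_0(\mathcal{K}_i,\mathcal{K}_i^v)$ and $C_0(\mathcal{K}_j,\mathcal{K}_j^v)$ are both one-dimensional, spanned by $[v]$, and $\bar f^{i,j}_0$ sends $[v]$ to $[v]$. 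Hence $\bar f^{i,j}_0$ is an isometric isomorphism in degree $0$, so its adjoint equals its inverse.

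\textbf{Step 1: the persistence domain.} Because $\im \bar f^{i,j}_0$ is all of $C_0(\mathcal{K}_j,\mathcal{K}_j^v)$, the condition defining $\Theta^{i,j}_1$ holds for every element. Thus $\Theta^{i,j}_1=C_1(\mathcal{K}_j,\mathcal{K}_j^v)$, and $\iota$ is the identity. This space has orthonormal basis the classes $[v,w]$ with $\{v,w\}\in\mathcal{K}_j$, that is, with $w$ a neighbor of $v$ at scale $r_j$. In the Vietoris--Rips setting these $w$ are exactly the points of $N_j(v)$. Consequently $\delta^{i,j}_1$ is just the relative boundary $\bar d^j_1$, transported to $C_0(\mathcal{K}_i,\mathcal{K}_i^v)$ by the identification $[v]\mapsto[v]$.

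\textbf{Step 2: the computation.} As in Proposition~\ref{proposition:zero_Laplacian}, $\bar d^j_1[v,w]=-[v]$, because $[w]=0$ in the relative group. Pairing against the orthonormal basis gives
\[
(\delta^{i,j}_1)^*[v]=-\sum_{w\in N_j(v)}[v,w].
\]
Applying $\delta^{i,j}_1$ to this sum then yields $|N_j(v)|\,[v]=\deg_{r_j}(v)[v]$. This is the claimed identity, read as scalar multiplication on the one-dimensional space $C_0(\mathcal{K}_i,\mathcal{K}_i^v)$.

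As an alternative check, the shifted-dimension Theorem~\ref{theorem:persistence_link} only covers $n\ge1$, so it does not apply here, and the direct computation above is the route to use. The only point needing care, and the one I expect to be the main obstacle, is Step 1. One must observe that in degree $0$ the persistence constraint is vacuous, so the pullback differential sees every edge at $v$ present at scale $r_j$, not only those present at scale $r_i$. This is exactly why the answer involves $r_j$ rather than $r_i$. I would also note briefly that the quotient inner product makes $[v]$ a unit vector, so no normalization constants appear.
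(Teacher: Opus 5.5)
Your proposal is correct and follows essentially the same route as the paper. Both arguments reduce to $\Delta_0^{v,i,j}=\delta^{i,j}_1(\delta^{i,j}_1)^*$ and identify $\Theta^{i,j}_1$ with all of $C_1(\mathcal{K}_j,\mathcal{K}_j^v)$, spanned by the edges $[v,w]$ with $d(v,w)\le r_j$; then $\bar d^j_1[v,w]=-[v]$ gives scalar multiplication by $\deg_{r_j}(v)$. Your observation that $\bar f^{i,j}_0$ is an isometric isomorphism in degree $0$, which makes the persistence constraint vacuous, justifies a step the paper only asserts.
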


\begin{proof}
By the definition of the persistent local Laplacian, for the case $n=0$, the lower-order term $(d_{0}^i)^* d_{0}^i$ vanishes. Consequently, we have
\begin{equation*}
    \Delta_0^{v,i,j} = \bar{\delta}_1^{i,j} (\bar{\delta}_1^{i,j})^*.
\end{equation*}
Here, $\bar{\delta}_1^{i,j}$ is the pullback differential between relative chain complexes.
Recall that the domain of the persistent boundary operator is defined as
\begin{equation*}
    \Theta^{i,j}_{1} = \{ x \in C_1(\mathcal{K}_j, \mathcal{K}_j^v) \mid d^j_{1} x \in C_0(\mathcal{K}_i, \mathcal{K}_i^v) \}.
\end{equation*}
Consider a basis element of the relative chain group $C_1(\mathcal{K}_j, \mathcal{K}_j^v)$, which is a relative $1$-simplex (edge) $[v, w]$ such that $d(v, w) \leq r_j$. Its boundary in the chain complex is $d_1^j([v, w]) = [w] - [v]$. In the relative space $C_0(\mathcal{K}_i, \mathcal{K}_i^v)$, any vertex $[w]$ for $w \neq v$ represents the zero class. Thus, we have
\begin{equation*}
    d_1^j([v, w]) = -[v].
\end{equation*}
This implies that $\Theta^{i,j}_{1}$ is linearly spanned by all relative $1$-simplices in $\mathcal{K}_j$ that are incident to $v$.

The dimension of $\Theta^{i,j}_{1}$ is therefore precisely $\deg_{r_j}(v)$. With respect to the standard basis for $\Theta^{i,j}_{1}$ and the basis $\{[v]\}$ for $C_0^i$, the persistent boundary operator $\bar{\delta}_1^{i,j}$ is represented by a $1 \times \deg_{r_j}(v)$ row vector
\begin{equation*}
    M_{\bar{\delta}_1^{i,j}} = \begin{pmatrix} -1 & -1 & \dots & -1 \end{pmatrix}.
\end{equation*}
Then the matrix representation of the persistent Laplacian is
\begin{equation*}
    M_{\Delta_0^{v,i,j}} = M_{\bar{\delta}_1^{i,j}} (M_{\bar{\delta}_1^{i,j}})^T =  \deg_{r_j}(v).
\end{equation*}
This identity $\Delta_0^{v,i,j} = \deg_{r_j}(v)$ completes the proof.
\end{proof}

\subsection{Local spectral analysis on graphs}

Spectral analysis is a cornerstone for processing weighted networks, yet global decomposition becomes computationally prohibitive for large-scale graphs. This limitation necessitates a shift toward localized spectral methods. Our persistent local Laplacian addresses this challenge by restricting spectral analysis to the filtration of neighbor-induced subgraphs. This tool effectively transforms memory-intensive global operations into a scalable, multi-scale localized computational task, permitting the efficient extraction of high-resolution topological signatures from massive networks.

\subsubsection{Persistent local invariants on weighted graphs}

Let $G=(V, E, w)$ be a weighted graph with vertex set $V$, edge set $E \subseteq V \times V$, and a weight function $w\colon  E \to \mathbb{R}$. For any real number $r$, we consider the subgraph
\begin{equation*}
    G_r = (V, E_r), \quad E_r = \{e \in E \mid w(e) \leq r\}.
\end{equation*}
This defines a filtration of clique complexes (also known as flag complexes). For any sequence of scales $0 = r_0 < r_1 < \dots < r_m$, we obtain a nested sequence of simplicial complexes
\begin{equation*}
    \Clq(G_{r_0}) \hookrightarrow \Clq(G_{r_1}) \hookrightarrow \dots \hookrightarrow \Clq(G_{r_m}).
\end{equation*}
Similar to the constructions in the previous section, for any vertex $v \in V$ and indices $0 \leq i \leq j \leq m$, we can obtain the  persistent local homology $H^{i,j}_{n}(G,v)$ and the persistent local Laplacian $\Delta^{i,j}_{n}(G,v)$ based on the relative chain complexes.

Alternatively, the local structure can be analyzed via the link complex of the vertex. The following proposition provides the graph-theoretic counterpart to the link identity:

\begin{proposition}\label{prop:graph_link_clique}
For any vertex $v \in V$, let $G_i[N_i(v)]$ be the subgraph induced by the set of neighbors $N_i(v) = \{ u \in V \mid (u, v) \in E_i \}$. The persistent link complex of $v$ is identically equal to the persistent clique complex
\begin{equation*}
    \mathrm{Lk}_{\Clq(G)}(v) = \Clq(G_{\bullet}[N_{\bullet}(v)]) \colon (\mathbb{Z}, \leq) \to \mathbf{Simp}^{\hookrightarrow} .
\end{equation*}
\end{proposition}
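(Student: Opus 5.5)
The plan is to mirror the argument of Proposition~\ref{prop:vr_link_equality}: first prove equality of simplex sets at each fixed index $i$, then observe that the transition maps agree. Fix $i$ and write $K_i = \Clq(G_{r_i})$. If $v$ has not yet been born, both sides are empty and there is nothing to prove. Since every vertex of $V$ lies in $G_r$ for all $r$, we may otherwise assume $v \in K_i$.

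\emph{Inclusion $\mathrm{Lk}_{K_i}(v) \subseteq \Clq(G_i[N_i(v)])$.} Let $\sigma \in \mathrm{Lk}_{K_i}(v)$. Then $v \notin \sigma$ and $\sigma \cup \{v\}$ is a clique in $G_{r_i}$. Two things follow:
\begin{enumerate}[label=$(\roman*)$]
    \item every $u \in \sigma$ is adjacent to $v$ in $G_{r_i}$, so $\sigma \subseteq N_i(v)$;
    \item any two distinct $u, u' \in \sigma$ are adjacent in $G_{r_i}$. Since both endpoints lie in $N_i(v)$, the edge $\{u,u'\}$ lies in the induced subgraph $G_i[N_i(v)]$.
\end{enumerate}
Hence $\sigma$ is a clique of $G_i[N_i(v)]$. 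This covers the empty simplex and singletons as degenerate cases.

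\emph{Reverse inclusion.} Let $\sigma$ be a clique of $G_i[N_i(v)]$. Then $v \notin \sigma$, because $N_i(v)$ excludes $v$ (there are no loops in $G$; I would state this convention explicitly). The pairs in $\sigma$ are adjacent in $G_{r_i}$, and each vertex of $\sigma$ is adjacent to $v$. So $\sigma \cup \{v\}$ is a clique of $G_{r_i}$, i.e.\ $\sigma \cup \{v\} \in K_i$, and therefore $\sigma \in \mathrm{Lk}_{K_i}(v)$.

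\emph{Functoriality.} For $i \le j$ we have $E_{r_i} \subseteq E_{r_j}$, so $N_i(v) \subseteq N_j(v)$ and $G_i[N_i(v)]$ is a subgraph of $G_j[N_j(v)]$. This yields inclusions $\Clq(G_i[N_i(v)]) \hookrightarrow \Clq(G_j[N_j(v)])$. The link transition maps $g^{i,j}$ are also restrictions of the inclusion $K_i \hookrightarrow K_j$. Both are identity-on-simplices inclusions between identical simplex sets, so the two functors coincide.

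The only delicate point is bookkeeping rather than mathematics. One has to read $N_i(v)$ with respect to $E_i = E_{r_i}$, and the induced subgraph must be taken in $G_{r_i}$, not in $G$. If one instead induces in the full graph $G$, the reverse inclusion fails, so I would make the intended meaning of $G_i[N_i(v)]$ explicit at the start.
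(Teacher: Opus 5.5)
Your argument is correct and follows the same route the paper takes for the Vietoris--Rips analogue (Proposition~\ref{prop:vr_link_equality}). That route is to show equality of simplex sets at each fixed index, then observe that both transition maps are restrictions of the same inclusions; the paper gives no separate proof of the present proposition and implicitly relies on that argument. Your two bookkeeping remarks are genuine implicit hypotheses of the statement, and you handle both correctly: there must be no loops, so that $v \notin N_i(v)$, and the induced subgraph must be taken in $G_{r_i}$ rather than in $G$.
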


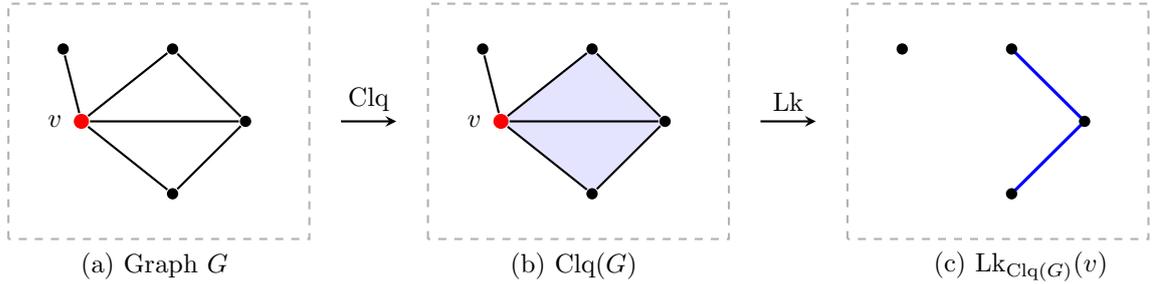
\begin{figure}[htbp]
  \centering
  \begin{tikzpicture}[
    scale=1.2, thick,
    node/.style={circle, fill=black, inner sep=1.5pt},
    v_node/.style={circle, fill=red, inner sep=2pt, label=left:{\small $v$}},
    neighbor/.style={circle, fill=black, inner sep=1.5pt}, 
    edge/.style={thick, black},
    link_edge/.style={very thick, blue},
    face_clique/.style={fill=blue!15, opacity=0.7},
    arrow/.style={->, >=stealth, thick, shorten >=5pt, shorten <=5pt}
  ]

    \begin{scope}
        \draw[dashed, gray!60] (-0.8, -1.3) rectangle (2.5, 1.3);
        
        \node[v_node] (v) at (0,0) {};
        \node[neighbor] (n1) at (1,0.8) {};
        \node[neighbor] (n2) at (1,-0.8) {};
        \node[neighbor] (n3) at (1.8,0) {};
        \node[node] (o) at (-0.2,0.8) {}; 
        
        \draw[edge] (v) -- (n1);
        \draw[edge] (v) -- (n2);
        \draw[edge] (v) -- (n3);
        \draw[edge] (n1) -- (n3);
        \draw[edge] (n2) -- (n3);
        \draw[edge] (v) -- (o);
        
        \node at (0.8, -1.6) {\small (a) Graph $G$};
    \end{scope}

    \draw[arrow] (2.7, 0) -- (3.6, 0) node[midway, above] {\small $\mathrm{Clq}$};

    \begin{scope}[xshift=4.6cm]
        \draw[dashed, gray!60] (-0.8, -1.3) rectangle (2.5, 1.3);

        \fill[face_clique] (0,0) -- (1,0.8) -- (1.8,0) -- cycle;
        \fill[face_clique] (0,0) -- (1,-0.8) -- (1.8,0) -- cycle;
        
        \node[v_node] (v2) at (0,0) {};
        \node[neighbor] (n1b) at (1,0.8) {};
        \node[neighbor] (n2b) at (1,-0.8) {};
        \node[neighbor] (n3b) at (1.8,0) {};
        \node[node] (ob) at (-0.2,0.8) {};
        
        \draw[edge] (v2) -- (n1b);
        \draw[edge] (v2) -- (n2b);
        \draw[edge] (v2) -- (n3b);
        \draw[edge] (n1b) -- (n3b);
        \draw[edge] (n2b) -- (n3b);
        \draw[edge] (v2) -- (ob);

        \node at (0.8, -1.6) {\small (b) $\mathrm{Clq}(G)$};
    \end{scope}

    \draw[arrow] (7.3, 0) -- (8.2, 0) node[midway, above] {\small $\mathrm{Lk}$};

    \begin{scope}[xshift=9.2cm]
        \draw[dashed, gray!60] (-0.8, -1.3) rectangle (2.5, 1.3);

        \draw[link_edge] (1,0.8) -- (1.8,0);
        \draw[link_edge] (1,-0.8) -- (1.8,0);
        
        \node[neighbor] (n1c) at (1,0.8) {};
        \node[neighbor] (n2c) at (1,-0.8) {};
        \node[neighbor] (n3c) at (1.8,0) {};
        
        \node[node] (oc) at (-0.2,0.8) {}; 
      
        \node at (1.1, -1.6) {\small (c) $\mathrm{Lk}_{\mathrm{Clq}(G)}(v)$};
    \end{scope}
  \end{tikzpicture}
  \caption{Relationship between a graph and its local persistent structure. (b) The clique complex $\Clq(G)$ highlights connected substructures in blue. (c) The link complex focuses on the relational structure between neighbors by removing vertex $v$.}
  \label{fig:clique_link_final}
\end{figure}

For any $i \leq j$, let $H^{i,j}_n(\Clq(G), v)$ and $\Delta^{i,j}_n(\Clq(G), v)$ denote the persistent homology and persistent Laplacian of the persistent link complex $\Clq(G_{\bullet}[N_{\bullet}(v)])$. Based on the correspondence between the relative complex and the link complex, we obtain the following structural equivalence.

\begin{proposition}
Let $G=(V, E, w)$ be a weighted graph. For any vertex $v \in V$ and $i \leq j$, we have
\begin{enumerate}[label=$(\roman*)$]
    \item $H^{i,j}_{n}(G,v) \cong H^{i,j}_{n-1}(\Clq(G), v)$ for $n\geq 1$;
    \item $\Delta^{i,j}_{n}(G,v)$ is unitarily equivalent to $\Delta^{i,j}_{n-1}(\Clq(G), v)$ for $n \geq 1$.
\end{enumerate}
\end{proposition}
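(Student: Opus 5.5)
The plan is to reduce both parts to results already established for general persistence simplicial complexes in $\mathbf{Simp}^{\hookrightarrow}$. I would apply them to the clique filtration $\mathcal{K}_i = \Clq(G_{r_i})$ and then identify the persistent link complex via Proposition~\ref{prop:graph_link_clique}.

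The first step is to check that Proposition~\ref{prop:graph_link_clique} holds level-wise. A simplex $\sigma \in \Clq(G_{r_i})$ with $v\notin\sigma$ satisfies $\sigma\cup\{v\}\in \Clq(G_{r_i})$ exactly when every vertex of $\sigma$ is adjacent to $v$ in $G_{r_i}$ and $\sigma$ is a clique of $G_{r_i}$. This is the same as saying $\sigma$ is a clique of the induced subgraph $G_i[N_i(v)]$. The transition maps on both sides are the restrictions of the inclusions $\Clq(G_{r_i})\hookrightarrow \Clq(G_{r_j})$, and $N_i(v)\subseteq N_j(v)$ for $i\le j$. So the two persistence complexes agree as functors, exactly as in the proof of Proposition~\ref{prop:vr_link_equality}. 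I will also note that the relative complexes $C_\ast(\mathcal{K}_i,\mathcal{K}_i^{\mathbf v})$ are the ones defining $H^{i,j}_n(G,v)$ and $\Delta^{i,j}_n(G,v)$.

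For (i), I would use the chain isometries $\phi_i\colon C_n(\mathcal{K}_i,\mathcal{K}^{\mathbf v}_i)\to C_{n-1}(\Lk_i)$ of Theorem~\ref{theorem:chain_isomorphism}. The key observation is that the sign $(-1)^n$ and the deletion of $v$ do not depend on $i$. Hence $\phi_j\circ\bar f^{i,j}_n = g^{i,j}_{n-1}\circ\phi_i$, which gives an isomorphism of persistence chain complexes. Passing to homology, we get isomorphisms $H_n(\mathcal{K}_i,\mathcal{K}^{\mathbf v}_i)\cong H_{n-1}(\Lk_i)$ that commute with the induced maps. Therefore the images of the induced maps correspond, and $H^{i,j}_n(G,v)\cong H^{i,j}_{n-1}(\Clq(G),v)$.

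There is one subtlety at $n=1$. The relative complex $C_\ast(\mathcal K_i,\mathcal K^{\mathbf v}_i)$ in degree $0$ is spanned by $[v]$, so it matches the augmented chain complex of the link, and the relevant homology is reduced. I would either read $H_0$ of the link as reduced homology, consistent with the earlier isomorphism $H_n(K,K\setminus\{v\})\cong\tilde H_{n-1}(\Lk_K(v))$, or restrict to $n\ge 2$ and treat $n=1$ by this convention. I expect this degree bookkeeping to be the only real obstacle, and I would handle it with an explicit remark.

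For (ii), the claim is exactly Theorem~\ref{theorem:persistence_link} applied to this filtration, combined with the identification from the first step. The ingredients are the following:
\begin{itemize}
\item $\phi$ is an isometry, so $\phi^\ast=\phi^{-1}$;
\item $\phi$ carries $\Theta^{i,j}_{n+1}$ onto the link persistence domain $\Theta^{\Lk,i,j}_n$;
\item since the $\bar f^{i,j}$ are isometric embeddings (Lemma~\ref{lemma:isometric_embedding}), the pseudoinverses are adjoints, and they conjugate accordingly.
\end{itemize}
Together these give $\Delta^{i,j}_n(G,v)=\phi^{-1}\circ\Delta^{i,j}_{n-1}(\Clq(G),v)\circ\phi$, which is the asserted unitary equivalence.
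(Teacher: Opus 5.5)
Your route is the paper's: its proof is the one-line reduction to Theorem~\ref{theorem:persistence_link} plus Proposition~\ref{prop:graph_link_clique}. Your level-wise check of the link identity and the naturality $\phi_j\circ\bar f^{i,j}_n=g^{i,j}_{n-1}\circ\phi_i$ make explicit what the paper leaves implicit. Your treatment of (i), including reading $H_0$ of the link as reduced homology at $n=1$, is correct.

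The gap is that the $n=1$ issue you isolate in (i) also breaks (ii), and there you do not address it. You assert that (ii) is "exactly Theorem~\ref{theorem:persistence_link}". However, $\Delta^{i,j}_{0}(\Clq(G),v)$ is defined on the \emph{unaugmented} link complex, and with that convention the down terms do not match at $n=1$.
\begin{itemize}
\item On the relative side, $C_0(\mathcal K_i,\mathcal K_i^{\mathbf v})=\mathbb{R}[v]$ and $d^i_1[v,u]=-[v]$ for every $u\in N_i(v)$. Hence $(d^i_1)^*d^i_1=J$, the all-ones matrix in the basis $\{[v,u]\}$.
\item On the link side, $d^{\Lk,i}_0=0$.
\end{itemize}
Since $\phi=-\mathrm{id}$ on these bases, you actually get
\[
\phi\circ\Delta^{i,j}_1(G,v)\circ\phi^{-1}=\delta^{\Lk,i,j}_1(\delta^{\Lk,i,j}_1)^*+\epsilon^*\epsilon ,
\]
where $\epsilon\colon C_0(\Lk_i)\to\mathbb{R}$ is the augmentation. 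This is the persistent Laplacian of the \emph{augmented} link complex, not $\Delta^{i,j}_0(\Clq(G),v)$.

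The two operators are genuinely not unitarily equivalent. For $x\in\Theta^{\Lk,i,j}_1$ the chain $d_1x$ has augmentation zero, so $(\delta^{\Lk,i,j}_1)^*$ kills the all-ones vector. The symmetric up-term therefore commutes with $J=\epsilon^*\epsilon$. The spectrum of $\Delta^{i,j}_1(G,v)$ is thus that of $\Delta^{i,j}_0(\Clq(G),v)$ with one zero eigenvalue replaced by $|N_i(v)|$, whenever $N_i(v)\neq\emptyset$.

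Already in Example~\ref{example:local_lap}, where the link is the path $a$--$b$--$c$, $\Delta^{K,v}_1$ has spectrum $\{1,3,3\}$, not $\{0,1,3\}$. This is consistent with $H_1(K,K\setminus\{v\})=0$.

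The paper's proof has the same defect. It inherits it from Theorem~\ref{theorem:laplacian_conjugacy}, whose $\phi$ is a chain isomorphism only onto the augmented link complex. The fix is the one you proposed for (i), applied uniformly to both parts. Either use the augmented link complex throughout, so that $\Delta^{i,j}_0(\Clq(G),v)$ includes $\epsilon^*\epsilon$ and $H_0$ is reduced, or state (i) and (ii) only for $n\geq 2$.
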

\begin{proof}
The result is a direct consequence of Theorem \ref{theorem:persistence_link} and Proposition \ref{prop:graph_link_clique}.
\end{proof}

\subsubsection{Computation of low-dimensional persistent Laplacians on graphs}

While the $0$-dimensional (persistent) local Laplacian on a weighted graph is given by the vertex degree, we shift our focus to the $1$-dimensional local Laplacian to analyze the local connectivity and cycle structures around a vertex.

By virtue of the unitary equivalence between the relative Laplacian and the link Laplacian, the computation of the $1$-dimensional local Laplacian $\Delta_1(G, v)$ is effectively reduced to the $0$-dimensional combinatorial Laplacian of the link complex $\Lk_K(v)$. In the context of a clique complex, this link is identically the clique complex of the induced neighborhood subgraph $G[N(v)]$, where $N(v) = \{x \in V \mid (x, v) \in E\}$.

\begin{proposition}\label{prop:matrix_rep}
Let $G=(V, E)$ be a graph. For a fixed vertex $v \in V$, the $1$-dimensional local Laplacian $\Delta_1(G, v)$ is unitarily equivalent to the $0$-dimensional graph Laplacian $L$ of the induced subgraph $G[N(v)]$. With respect to the standard basis indexed by the neighbors $\{u_1, u_2, \dots, u_d\} \subseteq N(v)$, the representation matrix $L = (L_{pq})$ is given by
\begin{equation*}
    L_{pq} = \begin{cases} 
        \deg_{G[N(v)]}(u_p) & \text{if } p=q, \\
        -1 & \text{if } p \neq q \text{ and } (u_p, u_q) \in E, \\
        0 & \text{otherwise},
    \end{cases}
\end{equation*}
where $\deg_{G[N(v)]}(u_p)$ denotes the degree of vertex $u_p$ within the induced subgraph $G[N(v)]$.
\end{proposition}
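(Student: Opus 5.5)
We plan to combine Theorem~\ref{theorem:laplacian_conjugacy} with Proposition~\ref{prop:graph_link_clique} in its non-persistent form (a single scale), and then compute the $0$-th link Laplacian explicitly. We work throughout in $K=\Clq(G)$.

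\textbf{Step 1.} Taking $n=1$ in Theorem~\ref{theorem:laplacian_conjugacy} gives
\[
\Delta_1(G,v)=\Delta_1^{K,v}=\phi^{-1}\circ\Delta_0^{\Lk}\circ\phi .
\]
Here $\phi$ is the isometric chain isomorphism of Theorem~\ref{theorem:chain_isomorphism}, so $\Delta_1(G,v)$ and $\Delta_0^{\Lk}$ are unitarily equivalent.

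\textbf{Step 2.} We identify the link. The argument of Proposition~\ref{prop:graph_link_clique}, applied at one scale, shows that $\sigma\in\Lk_K(v)$ if and only if $v\notin\sigma$ and $\sigma\cup\{v\}$ is a clique of $G$. This holds exactly when $\sigma\subseteq N(v)$ and $\sigma$ is a clique of $G[N(v)]$. Hence $\Lk_K(v)=\Clq(G[N(v)])$.

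\textbf{Step 3.} We compute the Laplacian. Since $d_0=0$, we have $\Delta_0^{\Lk}=d_1^{\Lk}(d_1^{\Lk})^\ast$. Because the $1$-skeleton of $\Clq(G[N(v)])$ is the graph $G[N(v)]$ itself, this operator is the graph Laplacian of $G[N(v)]$.

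To get the entries, order each edge $\{u_p,u_q\}$ of $G[N(v)]$, so that $d_1[u_p,u_q]=u_q-u_p$. The basis vectors $u_1,\dots,u_d$ of $C_0(\Lk)$ are orthonormal. Then
\[
\langle d_1d_1^\ast u_p,u_q\rangle=\langle d_1^\ast u_p,d_1^\ast u_q\rangle ,
\]
and $d_1^\ast u_p=\sum_e \langle d_1 e,u_p\rangle\, e$. The two cases follow:
\begin{itemize}
\item For $p=q$: every edge incident to $u_p$ contributes $(\pm1)^2=1$, so the entry is $\deg_{G[N(v)]}(u_p)$.
\item For $p\neq q$: only the common edge $\{u_p,u_q\}$, if it exists, contributes. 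Its two coefficients are $+1$ and $-1$, so the entry is $-1$; otherwise it is $0$.
\end{itemize}
This is a routine incidence-matrix computation, the same as in Example~\ref{example:local_lap}.

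\textbf{Step 4.} We transport the matrix back. Under $\phi$, the basis $[v,u_p]$ of $C_1(K,K\setminus\{v\})$ goes to $-u_p$. The sign is common to all basis vectors, so conjugation leaves the matrix unchanged. Therefore $L$ represents $\Delta_1(G,v)$ in the basis $\{[v,u_p]\}$.

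The only point needing care is Step 2. There we must check that the edges of the link are exactly the edges $(u_p,u_q)\in E$ between neighbours of $v$, since $\{v,u_p,u_q\}$ is then automatically a triangle of the clique complex. This is also why the off-diagonal condition "$(u_p,u_q)\in E$" in the statement agrees with adjacency in $G[N(v)]$.
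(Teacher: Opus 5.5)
Your route is the same as the paper's: the conjugacy theorem at $n=1$, then $\Lk_K(v)=\Clq(G[N(v)])$, then read off the graph Laplacian. Your Steps 2 and 4 are fine. However, Step 3 contains a genuine error, and the paper's proof has the same one.

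You use $d_0=0$ to write $\Delta_0^{\Lk}=d_1^{\Lk}(d_1^{\Lk})^\ast$. On the relative side, though, the boundary $\bar d_1\colon C_1(K,K\setminus\{v\})\to C_0(K,K\setminus\{v\})=\mathbb{R}[v]$ is not zero: $\bar d_1[v,u_p]=[u_p]-[v]=-[v]$. This is exactly what Proposition~\ref{proposition:zero_Laplacian} uses. Hence
\[
\Delta_1^{K,v}=\bar d_2\bar d_2^{\ast}+\bar d_1^{\ast}\bar d_1 ,
\]
and in the basis $\{[v,u_p]\}$ the second summand is the all-ones matrix $J_d$. For $\phi$ to intertwine $\bar d_1$ with a differential on the link side, as Theorem~\ref{theorem:chain_isomorphism} requires, the link complex must be augmented. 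That is, $\phi([v])$ is the generator of $C_{-1}(\Lk)\cong\mathbb{R}$, and $d_0^{\Lk}=\varepsilon$ with $\varepsilon(u_p)=1$. The correct $n=1$ instance of Theorem~\ref{theorem:laplacian_conjugacy} therefore gives $\Delta_1^{K,v}\cong L+\varepsilon^\ast\varepsilon=L+J_d$. Only the up-part $\bar d_2\bar d_2^\ast$ is conjugate to $L$.

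This cannot be patched, because the proposition as stated is false. Take $G$ to be a single edge $\{v,u\}$. Then $C_1(K,K\setminus\{v\})=\mathbb{R}[v,u]$, there are no triangles, and $\Delta_1^{K,v}=\bar d_1^\ast\bar d_1=[1]$. But $G[N(v)]$ is one isolated vertex, so $L=[0]$.

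Conceptually, the paper's local Hodge theorem gives $\ker\Delta_1^{K,v}\cong H_1(K,K\setminus\{v\})\cong\tilde H_0(\Lk_K(v))$, which is reduced homology. In contrast, $\ker L\cong H_0(G[N(v)])$ is unreduced and is one dimension larger whenever $N(v)\neq\emptyset$.

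What your computation actually establishes is a corrected statement: in the basis $\{[v,u_p]\}$, $\Delta_1(G,v)$ is represented by $L+J_d$. Its diagonal entries are $\deg_{G[N(v)]}(u_p)+1$, and its off-diagonal entries are $0$ if $(u_p,u_q)\in E$ and $1$ otherwise. The same correction applies to Example~\ref{example:local_lap}, where the true matrix of $\Delta_1^{K,v}$ is $L+J_3$, not $L$.
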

\begin{proof}
The result follows from the fact that the $0$-th combinatorial Laplacian of the clique complex $\Clq(G[N(v)])$ coincides exactly with the standard graph Laplacian of $G[N(v)]$.
\end{proof}

Consider a filtration of graphs where $G_i = (V_i, E_i)$ is a subgraph of $G_j = (V_j, E_j)$. Let $C^i_{\ast}$ and $C^j_{\ast}$ denote the chain complexes of the clique complexes associated with $G_i$ and $G_j$, respectively. Then we have
\begin{equation*}
  \Theta_{i,j} = \{x \in C^j_{1} \mid d_1^j x \in C^i_{0}\},
\end{equation*}
where $d_1^j\colon  C^j_1 \to C^j_0$ is the boundary operator of the complex $C^j_{\ast}$. We define the pullback differential $\delta^{i,j}_1\colon  \Theta_{i,j} \to C^i_{0}$ as the restriction of the boundary operator, namely $\delta^{i,j}_1 x = d_1^j x$. The $1$-dimensional persistent local Laplacian is then defined as
\begin{equation*}
  \Delta^{i,j}_1 = \delta^{i,j}_1(\delta^{i,j}_1)^{\ast}\colon  C^i_{0} \to C^i_{0}.
\end{equation*}

To perform the computation, let $B_j \in \mathbb{R}^{|V_j| \times |E_j|}$ be the incidence matrix of $G_j$. We partition the rows of $B_j$ according to the vertex sets $V_i$ and $V_{out} = V_j \setminus V_i$. Then, we write
\begin{equation*}
    B_j = \begin{pmatrix} B_{in} \\ B_{out} \end{pmatrix},
\end{equation*}
where $B_{in}$ and $B_{out}$ are the submatrices consisting of rows corresponding to vertices in $V_i$ and $V_{out}$, respectively.

\begin{proposition}\label{prop:persistent_laplacian_matrix}
Let $G_i \subseteq G_j$ be as defined above. The persistent local Laplacian $\Delta^{i,j}_1 = \delta^{i,j}_1(\delta^{i,j}_1)^{\ast}$ in the standard basis of $C^i_0$ has the representation matrix
\begin{equation*}
    L^{i,j} = B_{in} P_{\Theta} B_{in}^T,
\end{equation*}
where $P_{\Theta}$ is the orthogonal projection matrix of $C^{j}_{1}$ onto the subspace $\Theta_{i,j}$. Specifically, $P_{\Theta}$ can be computed via the Moore-Penrose pseudoinverse as
\begin{equation*}
    P_{\Theta} = I - B_{out}^T (B_{out} B_{out}^T)^\dagger B_{out}.
\end{equation*}
\end{proposition}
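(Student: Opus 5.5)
Since $G_i\subseteq G_j$ is an inclusion, the chain map $f^{i,j}_0\colon C^i_0\hookrightarrow C^j_0$ is an isometric embedding; by Proposition~\ref{proposition:properties}(iv), $(f^{i,j}_0)^\dagger=(f^{i,j}_0)^\ast$. In the vertex basis, $(f^{i,j}_0)^\ast$ is the coordinate projection onto the rows indexed by $V_i$. Hence for $x\in\Theta_{i,j}$ we have $\delta^{i,j}_1x=(f^{i,j}_0)^\ast d^j_1x=B_{in}x$, so $\delta^{i,j}_1=B_{in}\iota$, where $\iota\colon\Theta_{i,j}\hookrightarrow C^j_1$ is the inclusion. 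The plan is to compute the adjoint of this composite and identify $\iota\iota^\ast$ with $P_\Theta$.

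\textbf{Step 1: describe $\Theta_{i,j}$.} Split $d^j_1x=B_jx$ into the blocks $(B_{in}x,B_{out}x)$. The condition $d^j_1x\in\im f^{i,j}_0=C^i_0$ says exactly that the coordinates on $V_{out}$ vanish. Therefore
\[
\Theta_{i,j}=\ker B_{out}\subseteq C^j_1 .
\]

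\textbf{Step 2: compute the adjoint.} Give $\Theta_{i,j}$ the inner product restricted from $C^j_1$. Then $\iota^\ast\colon C^j_1\to\Theta_{i,j}$ is the orthogonal projection onto $\Theta_{i,j}$, and $\iota\iota^\ast=P_\Theta$ as an operator on $C^j_1$. Consequently
\[
\Delta^{i,j}_1=B_{in}\,\iota\iota^\ast\,B_{in}^T=B_{in}P_\Theta B_{in}^T,
\]
using that adjoints are transposes in orthonormal bases. One point needs care: the formula is basis independent only because $\delta\delta^\ast$ does not depend on the choice of basis of $\Theta_{i,j}$. I would note this explicitly, since $\Theta_{i,j}$ has no canonical simplex basis.

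\textbf{Step 3: the projection formula.} The orthogonal projection onto $\ker B_{out}$ equals $I-P_{(\ker B_{out})^\perp}=I-P_{\im B_{out}^T}$. By Proposition~\ref{proposition:properties}(i), applied to $g=B_{out}^T$, we get $P_{\im B_{out}^T}=gg^\dagger$. It then remains to show $(B_{out}^T)^\dagger=(B_{out}B_{out}^T)^\dagger B_{out}$. This is a standard pseudoinverse identity; I would verify it by checking the four Penrose conditions, or via the SVD $B_{out}=U\Sigma V^T$. In the SVD picture both sides reduce to $V\Sigma^\dagger U^T$-type expressions, and $B_{out}^T(B_{out}B_{out}^T)^\dagger B_{out}=V\Sigma^T(\Sigma\Sigma^T)^\dagger\Sigma V^T$ is the projection onto the span of the right singular vectors with nonzero singular values. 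That span is $\im B_{out}^T$.

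I expect Step 3 to be the main obstacle. It needs the pseudoinverse identity, because $B_{out}B_{out}^T$ is typically singular: an isolated-free component structure among the new vertices can make rows dependent. So the inverse formula of Proposition~\ref{proposition:properties}(iv) cannot be used directly, and the SVD argument is the route I would take. Steps 1 and 2 are routine bookkeeping.
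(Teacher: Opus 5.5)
Your proposal is correct and follows essentially the same route as the paper. You identify $\delta^{i,j}_1 = B_{in}\iota$ on $\Theta_{i,j}=\ker B_{out}$, which gives $\Delta^{i,j}_1 = B_{in}\,\iota\iota^\ast B_{in}^T = B_{in}P_\Theta B_{in}^T$, and then you write the projection onto $\ker B_{out}$ as $I - B_{out}^T(B_{out}B_{out}^T)^\dagger B_{out}$. Your SVD verification of $(B_{out}^T)^\dagger=(B_{out}B_{out}^T)^\dagger B_{out}$ supplies detail that the paper only asserts.

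One small correction to your side remark: $B_{out}B_{out}^T$ is singular exactly when some connected component of $G_j$ lies entirely in $V_{out}$ (e.g.\ an isolated new vertex). This does not affect your argument, since the pseudoinverse formula holds in every case.
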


\begin{proof}

The pullback differential $\delta_1^{i,j}$ is the restriction of the boundary operator $d_1^j$ to $\Theta_{i,j}$. For any $x \in \Theta_{i,j}$, its image lies in $C_0^i$, so we can write the action as $\delta_1^{i,j} x = B_{in} x$.
The adjoint $(\delta_1^{i,j})^*\colon  C_0^i \to \Theta_{i,j}$ is given by $P_{\Theta} B_{in}^T$, where $P_{\Theta}$ is the orthogonal projection of $C^{j}_{1}$ onto $\Theta_{i,j}$.

Thus, the persistent local Laplacian is
\begin{equation*}
    \Delta_1^{i,j} = \delta_1^{i,j} (\delta_1^{i,j})^* = B_{in} P_{\Theta} B_{in}^T.
\end{equation*}
Using the property of orthogonal projections onto the kernel of a matrix, we have $P_{\Theta} = I - B_{out}^T (B_{out} B_{out}^T)^\dagger B_{out}$, where $\dagger$ denotes the Moore-Penrose pseudoinverse.
Therefore, the representation matrix is
\begin{equation*}
    L^{i,j} = B_{in} \left( I - B_{out}^T (B_{out} B_{out}^T)^\dagger B_{out} \right) B_{in}^T,
\end{equation*}
as desired.
\end{proof}

\begin{example}
Let $G_j = (V_j, E_j)$ be a graph with $V_j = \{u_1, u_2, v_1, v_2\}$ and $V_i = \{u_1, u_2\}$. The edge set $E_j$ consists of a direct internal edge $e_1 = (u_2, u_1)$ and an external bridge path of length $3$ formed by $e_2 = (v_1, u_1)$, $e_3 = (v_2, v_1)$, and $e_4 = (u_2, v_2)$. 

\begin{figure}[htbp]
    \centering
    \begin{tikzpicture}[
        scale=1.8,
        thick, 
        main node/.style={circle, draw, thick, fill=red!15, font=\large, minimum size=0.7cm},
        ext node/.style={circle, draw, thick, fill=blue!15, font=\large, minimum size=0.7cm},
        >={Stealth[length=2mm]}
    ]

        \node[main node] (u1) at (0,0) {$u_1$};
        \node[main node] (u2) at (1.8,0) {$u_2$};
        \node[ext node] (v1) at (0,1.2) {$v_1$};
        \node[ext node] (v2) at (1.8,1.2) {$v_2$};

        \path[draw, thick, ->]
            (u2) edge node[below, font=\large] {$e_1$} (u1)
            (v1) edge node[left, font=\large] {$e_2$} (u1)
            (v2) edge node[above, font=\large] {$e_3$} (v1)
            (u2) edge node[right, font=\large] {$e_4$} (v2);

        \draw[rounded corners, blue, thick, dashed] 
            (-0.4,-0.4) rectangle (2.2,0.4);
        \node[font=\large] at (2.6,0) {$G_i$};
        \node[font=\large] at (2.6,1.2) {$G_j$};

    \end{tikzpicture}
    \caption{Illustration of the graphs $G_i$ and $G_j$.}
\end{figure}
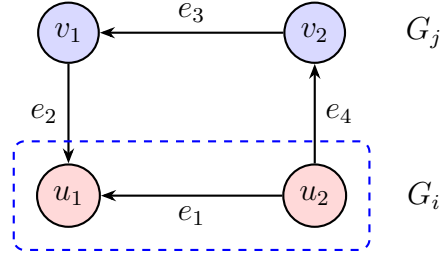

The boundary operator $d_1^j$ is represented by the incidence matrix $B$ and we have
\begin{equation*}
 d_1^j \begin{pmatrix}
         e_1 & e_2 & e_3 & e_4 
       \end{pmatrix} = \begin{pmatrix}
         u_1 & u_2 & v_1 & v_2 
       \end{pmatrix}B.
\end{equation*}
Here,
\begin{equation*}
B = \begin{pmatrix} B_{in} \\ B_{out} \end{pmatrix} = \left( \begin{array}{cccc} 1 & 1 & 0 & 0 \\ -1 & 0 & 0 & -1 \\ \hline 0 & -1 & 1 & 0 \\ 0 & 0 & -1 & 1 \end{array} \right).
\end{equation*}
The subspace $\Theta_{i,j}$ is defined as the kernel of $B_{out}$. Solving $B_{out} \mathbf{x} = 0$ yields a $2$-dimensional subspace spanned by the orthogonal basis:
\begin{itemize}
    \item $\xi_1 = (1, 0, 0, 0)^T$ (representing the internal edge $e_0$);
    \item $\xi_2 = (0, 1, 1, 1)^T$ (representing the chain of the external path $e_1 + e_2 + e_3$).
\end{itemize}
The orthogonal projection matrix onto $\Theta_{i,j}$ is given by $P_{\Theta} = \frac{\xi_1 \xi_1^T}{\|\xi_1\|^2} + \frac{\xi_2 \xi_2^T}{\|\xi_2\|^2}$. The persistent local Laplacian $L^{i,j}$ is computed via the product $B_{in} P_{\Theta} B_{in}^T$. The images of the basis vectors under $B_{in}$ are
\begin{equation*}
    B_{in} \xi_1 = \begin{pmatrix} 1 \\ -1 \end{pmatrix}, \quad B_{in} \xi_2 = \begin{pmatrix} 1 \\ -1 \end{pmatrix}.
\end{equation*}
Hence, we obtain
\begin{equation*}
    L^{i,j} = \frac{(B_{in} \xi_1)(B_{in} \xi_1)^T}{1} + \frac{(B_{in} \xi_2)(B_{in} \xi_2)^T}{3} = \frac{4}{3} \begin{pmatrix} 1 & -1 \\ -1 & 1 \end{pmatrix}.
\end{equation*}
The increase in the spectral gap of $L^{i,j}$ from $2$ (intrinsic to $G_i$) to $8/3$ indicates that the persistent Laplacian effectively captures the enhanced connectivity between $u_1$ and $u_2$ afforded by the external path in $G_j$.
\end{example}

\subsubsection{Spectral interpretation of persistent local Laplacian}

In the $0$-dimensional case, the combinatorial Laplacian coincides with the classical graph Laplacian $L$. In spectral graph theory, $L$ characterizes the diffusion process over a network. The multiplicity of the zero eigenvalue corresponds to the $0$-th Betti number, indicating the number of connected components. Notably, the second smallest eigenvalue, known as the \textit{Fiedler value} or \textit{algebraic connectivity}, quantifies the robustness of the graph's connectivity. This spectral gap is intimately related to the \textit{Cheeger invariant}, providing a discrete analogue to the geometric bottleneck ratio.

The combinatorial Laplacian generalizes these notions to higher-dimensional simplicial complexes. According to the Hodge Theorem for complexes, the harmonic space of the $k$-dimensional Laplacian $\Delta_k$ is isomorphic to the $k$-th simplicial homology group $H_k(K; \mathbb{R})$. Beyond the kernel, the non-zero spectrum reflects the intrinsic geometry of the complex, such as the "tightness" or the degree of enclosure of $k$-dimensional cavities.

In the persistent setting, the persistent Laplacian admits a persistent version of the Hodge decomposition. Its harmonic part is naturally isomorphic to the persistent homology groups, a central object in topological data analysis. The non-harmonic spectrum, meanwhile, characterizes the evolution of diffusion rates and the ``flow'' of cavity closure across different scales of the filtration.

The local Laplacian proposed in this work characterizes the local geometric features in the neighborhood of a specific vertex. Specifically, its harmonic space is isomorphic to the corresponding local homology group. The spectrum of the $0$-dimensional local Laplacian reflects the vertex degree, while the $1$-dimensional local spectrum characterizes the connectivity and diffusion efficiency among its neighbors. Higher-dimensional local spectra describe the presence and relative closure of local cavities surrounding the vertex.

Finally, the persistent local Laplacian extends this to a multi-scale setting. There exists an isomorphism between the harmonic space of the persistent local Laplacian and the corresponding persistent local homology. Its spectral evolution provides a refined description of the local space, capturing the dynamic changes in diffusion velocity and the topological flux of local enclosure across the filtration.

In essence, while the classical Laplacian is a fundamental operator for characterizing topological and geometric information, the persistent local Laplacian represents its evolution towards higher-dimensional analysis, dynamic multi-scale tracking, and localized geometric description.

\section{Acknowledgments}

This work was supported in part by the Natural Science Foundation of China [Grant Number 12401080], the Scientific Research Foundation of Chongqing University of Technology, the Start-up Research Fund of the University of North Carolina at Charlotte, the National Key Research and Development Program of China (NKPs) [Grant Number 2024YFA1013201], the Natural Science Foundation of Chongqing (NSFCQ) [Grant Number CSTB2024NSCQ-LZX0040], the Special Project of Chongqing Municipal Science and Technology Bureau [Grant Number 2025CCZ015].

\bibliographystyle{plain}  
\bibliography{Reference}

\end{document}